\newcommand{\Levy}{L\'{e}vy}
\newcommand{\R}{\mathbb{R}}
\newcommand{\F}{\mathscr{F}}
\newcommand{\Z}{\mathbb{Z}}
\newcommand{\e}{\varepsilon}
\newcommand{\cadlag}{c\`{a}dl\`{a}g}
\renewcommand{\P}{\mathbb{P}}
\numberwithin{equation}{section}
\renewcommand\section{\@startsection {section}{1}{\z@}%
{-3.5ex \@plus -1ex \@minus -.2ex}%
{2.3ex \@plus.2ex}%
{\normalfont\large\bf}}
\renewcommand\subsection{\@startsection {subsection}{1}{\z@}%
{-3.5ex \@plus -1ex \@minus -.2ex}%
{2.3ex \@plus.2ex}%
{\normalfont\normalsize\bf}}
\theoremstyle{plain}
\newtheorem{thm}{Theorem}[section]
\newtheorem{lem}[thm]{Lemma}
\newtheorem{prop}[thm]{Proposition}
\theoremstyle{definition}
\newtheorem{Rem}[thm]{Remark}
\begin{document}
\begin{center}
\Large \textbf{Conditioning to avoid bounded sets for a one-dimensional \Levy\ processes}
\end{center}
\begin{center}
Kohki IBA\footnote{Graduate School of Science, Osaka University, Japan. E-mail: kohki.iba@gmail.com}
\end{center}
\begin{abstract}
For several classes of bounded sets $A$, the limit of a one-dimensional \Levy\ process conditioned to avoid $A$ up to a parametrized random time which tends to infinity. For $A$ we take the set of finite points with several clocks and a bounded $F_\sigma$-set with exponential clock. We also take an integer lattice with exponential clock.
\end{abstract}


\section{Introduction}
A conditioning problem is to study the long-time limit of the form
\begin{align}
\label{a1}
\lim_{\tau\to\infty} \P_x(\Lambda|\ T_A>\tau)\qquad \text{for}\ \Lambda\in \F_t,
\end{align}
where $((X_t)_{t\ge 0},(\F_t)_{t\ge 0},(\P_x)_{x\in \R})$ is a Markov process, $T_A$ is the first hitting time of a set $A$, and $\tau$ is a net of parametrized random times tending to infinity, called a \emph{clock}. In particular, we call this problem \emph{conditioning to avoid the set $A$} or \emph{conditioning to stay the set $A^c$}. Here, for the random clock $\tau$, we adopt one of the following:
\begin{enumerate}
  \item[(C)] Constant clock: $\tau=t$ as $t\to \infty.$
  \item[(Ex)] Exponential clock: $\tau=(\bm{e}_q)$ as $q\to 0+$, where $\bm{e}_q$ has the exponential distribution with parameter $q>0$ and is independent of $(X_t)$.
  \item[(OH)] One-point hitting time clock: $\tau=(T_c)$ as $c\to \pm \infty$, where $T_c$ is the first hitting time at $c$.
  \item[(TH)] Two-point hitting time clock: $\tau=(T_c\wedge T_{-d})$ as $c,d\to \infty$ and $\frac{d-c}{c+d}\to \gamma\in [-1,1]$, which denote $(c,d)\stackrel{(\gamma)}{\to}\infty$.
  \item[(IL)] Inverse local time clock: $\tau=(\eta_u^c)$ as $c\to \pm \infty$, where $(\eta_u^c)_{u\ge 0}$ is the inverse local time.
\end{enumerate}

To solve this problem, we want to find a non-negative harmonic function $h_A$ for the process killed upon reaching $A$, and a function $\rho(\tau)$ such that
\begin{align}
\lim_{\tau\to \infty}\rho(\tau)\P_{X_t}(T_A>\tau)=h_A(X_t)\ \text{a.s. and in}\ L^1(\P_x)
\end{align}
holds (see, Section \ref{S2-1} for the details). Then, we can express the limit (\ref{a1}) via Doob's $h$-transform with respect to $h_A$:
\begin{align}
\lim_{\tau\to\infty} \P_x(\Lambda|\ T_A>\tau)=\P_x\left[1_A\cdot \frac{h_A(X_t)}{h_A(x)}1_{\{T_A>t\}}\right],
\end{align}
for $x\in \{x\in \R;\ h_A(x)> 0\}$.

This problem has been studied for various processes and sets:
\begin{center}
\begin{tabular}{|c|c|c|c|} \hline
 Paper & Process & Conditioning & Clock \\ \hline\hline
 Knight \cite{Kn} & Brownian motion & 
 \begin{tabular}{c}
  avoid $(-\infty,a)$\\
  avoid $(a,\infty)$\\
  stay $[a,-a]$ 
 \end{tabular}
 & \begin{tabular}{c}
  (C)\\
  (IL)
 \end{tabular}\\ \hline
 Lambert \cite{Lam}& s.n. \Levy\ process &stay $[0,a]$&(Ex)\\ \hline
 \begin{tabular}{c}
  Chaumont \cite{Chaumont}\\
  Choumont-Doney \cite{CD}
 \end{tabular} & \Levy\ process & stay $(0,\infty)$ & \begin{tabular}{c}
  (C)\\
  (Ex)
 \end{tabular}\\ \hline
 Kyprianou et al. \cite{KRS}& subordinator& stay $[a,b]$&(Ex) \\ \hline
 Pant\'{i} \cite{Pa} & \Levy \ process & avoid $\{0\}$ & (Ex)\\ \hline
 \begin{tabular}{c}
    D\"{o}ring et al. \cite{DKW}\\
    Lenthe-Weissmann \cite{LW}
 \end{tabular}
  & stable process & avoid $[a,b]$ & (Ex)\\ \hline
 D\"{o}ring et al. \cite{DWW} &\Levy\ process & avoid $[a,b]$& (Ex) \\ \hline
 \begin{tabular}{c}
    Takeda-Yano \cite{TY}\\
    Takeda \cite{Takeda}
 \end{tabular}
  &\Levy\ process & avoid $\{0\}$ &\begin{tabular}{c}
   (Ex)\\
   (OH)\\
   (TH)\\
   (IL)
 \end{tabular} \\\hline \hline
  \multirow{3}{*}{This paper} & \multirow{3}{*}{\Levy\ process} &\begin{tabular}{c}
    avoid $\{a,b\}$\\
    avoid $\{a_1,...,a_n\}$
  \end{tabular}&
  \begin{tabular}{c}
    (Ex)\\
    (OH)\\
    (TH)\\
    (IL)
  \end{tabular}\\ \cline{3-4}
  &&\begin{tabular}{c}
   avoid bounded $F_\sigma$-sets\\
  avoid $L\Z$
  \end{tabular}
  & (Ex) \\ \hline
\end{tabular}
\end{center}

We consider a one-dimensional \Levy\ process $(X_t)$ which is recurrent and for which every point is regular for itself. For the characteristic exponent $\Psi$ of $X$, i.e., $\P_0[e^{i\lambda X_t}]=e^{-t\Psi(\lambda)},$ we always assume the condition
\begin{align}
  \text{\textbf{(A)}} \int_0^\infty \left|\frac{1}{q+\Psi(\lambda)}\right|d\lambda <\infty\qquad \text{for }q>0.
\end{align}
Then, our main theorems are as follows:

First, we consider conditioning to avoid two points (see, Section \ref{S3} for the details).
\begin{thm}
\label{t1.1}
For distinct points $a,b\in \R$ and $-1\le \gamma\le 1$, we define a function
\begin{align}
\varphi_{a,b}^{(\gamma)}(x):=h^{(\gamma)}(x-a)-\P_x(T_b<T_a)h^{(\gamma)}(b-a),
\end{align}
where $h^{(\gamma)}$ is defined by (\ref{b7}). Then, the following assertions hold:
\begin{align}
   \text{(Ex)}&\ \lim_{q\to 0+}\mathbb{P}_x[F_t|\ \bm{e}_q<T_a\wedge T_b]=\mathbb{P}_x\left[F_t \cdot\frac{\varphi_{a,b}^{(0)}(X_t)}{\varphi_{a,b}^{(0)}(x)}1_{\{T_a\wedge T_b>t\}}\right],\\
    \text{(OH)}&\ \lim_{c\to \pm \infty }\mathbb{P}_x[F_t|\ T_c<T_a\wedge T_b]=\mathbb{P}_x\left[F_t \cdot\frac{\varphi_{a,b}^{(\pm 1)}(X_t)}{\varphi_{a,b}^{(\pm 1)}(x)}1_{\{T_a\wedge T_b>t\}}\right],\\
   \text{(TH)}&\ \displaystyle \lim_{(c,d)\stackrel{(\gamma)}{\to}\infty}\mathbb{P}_x\left[F_t|\ T_c\wedge T_{-d}<T_a\wedge T_b\right]= \mathbb{P}_x\left[F_t\cdot \frac{\varphi_{a,b}^{(\gamma)}(X_t)}{\varphi_{a,b}^{(\gamma)}(x)}1_{\{T_a\wedge T_b>t\}}\right],\\
   \text{(IL)}&\ \displaystyle \lim_{c\to \pm \infty}\mathbb{P}_x[F_t|\ \eta_u^c<T_a\wedge T_b]=\mathbb{P}_x\left[F_t\cdot \frac{\varphi_{a,b}^{(\pm 1)}(X_t)}{\varphi_{a,b}^{(\pm 1)}(x)}1_{\{T_a\wedge T_b>t\}}\right],
\end{align}
for all bounded $\F_t$-measurable functionals $F_t$ and for $x\in \R$ such that the denominator is not zero.
\end{thm}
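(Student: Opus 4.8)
The plan is to reduce all four clocks to the common mechanism described in the introduction: for each clock $\tau$ we exhibit a normalizing function $\rho(\tau)$ such that $\rho(\tau)\,\P_y(\tau<T_a\wedge T_b)\to \varphi_{a,b}^{(\gamma)}(y)$ both pointwise in $y$ and, after substituting $y=X_t$, in $L^1(\P_x)$ on $\{T_a\wedge T_b>t\}$; once this is in hand, the Markov property at time $t$ together with the identity $\P_x[F_t\mid \tau<T_a\wedge T_b]=\P_x[F_t\,\P_{X_t}(\tau<T_a\wedge T_b)1_{\{T_a\wedge T_b>t\}}]/\P_x(\tau<T_a\wedge T_b)$ yields the stated $h$-transform formulas by dividing numerator and denominator by $\rho(\tau)$. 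So the real content is the convergence of the normalized survival probabilities, and the identification of the limit as $\varphi_{a,b}^{(\gamma)}$.

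For the exponential clock (Ex) I would start from the resolvent: $\P_y(\bm e_q<T_a\wedge T_b)=1-q\,\mathbb E_y[\int_0^{T_a\wedge T_b}e^{-qs}\,ds]$ can be rewritten via the $q$-resolvent density $r_q$ (which exists and is continuous under assumption (A)) and the strong Markov property at $T_a\wedge T_b$, giving a linear combination of $r_q$ evaluated at the gaps, with coefficients $\P_y(T_a<T_b)$, $\P_y(T_b<T_a)$. The standard expansion $r_q(x)=r_q(0)-h^{(\gamma)}(x)+o(1)$-type asymptotics as $q\to0+$ (this is exactly the role of the function $h^{(\gamma)}$ fixed in (\ref{b7}); for (Ex) one uses the symmetrized version $h^{(0)}$) then produces, after the $r_q(0)$ terms cancel against the "$1-$", precisely $\varphi_{a,b}^{(0)}(y)$ in the limit; the normalization is $\rho(\bm e_q)=1/(q\,r_q(0))$ or an equivalent constant. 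For (OH) and (IL) I would instead condition on the event $\{T_c<T_a\wedge T_b\}$ by splitting at $T_c$: $\P_y(T_c<T_a\wedge T_b)=\P_y(T_c<T_a\wedge T_b)$ is handled through the two-point escape probabilities $\P_y(T_a<T_b)$ and the hitting probabilities of $c$ from $y$, $a$, $b$; letting $c\to\pm\infty$ the relevant ratios converge, by the one-sided analogue of the resolvent asymptotics, to the one-sided potential $h^{(\pm1)}$, and assembling the pieces gives $\varphi_{a,b}^{(\pm1)}$. The two-point clock (TH) is the genuinely general case: decompose at $T_c\wedge T_{-d}$, express everything through the exit-type probabilities from the interval $(-d,c)$ started at $y,a,b$, and take the limit $(c,d)\xrightarrow{(\gamma)}\infty$; the interpolation parameter $\gamma$ enters exactly because the limiting harmonic function of the killed process depends on the relative rate at which the two barriers recede, which is encoded in the family $h^{(\gamma)}$. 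In each case the pointwise limit should already have been essentially established in Section \ref{S3} for the one-point problem $A=\{0\}$ (the Takeda–Yano results), so here the work is the extra bookkeeping coming from the second point $b$ via the escape probability $\P_x(T_b<T_a)$, and checking that $\varphi_{a,b}^{(\gamma)}$ is genuinely harmonic and nonnegative for the process killed on $\{a,b\}$ — the nonnegativity is where the geometry of $h^{(\gamma)}$ matters and where I expect to invoke a maximum-principle or last-exit-decomposition argument.

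The upgrade from pointwise convergence to $L^1(\P_x)$ convergence of $\rho(\tau)\P_{X_t}(\tau<T_a\wedge T_b)1_{\{T_a\wedge T_b>t\}}$, which is what legitimizes passing the limit through the expectation $\P_x[F_t\,\cdot\,]$, I would obtain by a uniform-integrability argument: dominate $\rho(\tau)\P_y(\tau<T_a\wedge T_b)$ by a $\tau$-independent integrable function of $y$ (for (Ex), monotonicity of $q\mapsto q r_q$ near $0$ or an explicit resolvent bound does this; for the hitting-time clocks one uses that the normalized survival probability is bounded by a constant multiple of $1+h^{(\gamma)}(y)$ plus a harmonic correction, all of which are $\P_x$-integrable on $\{T_a\wedge T_b>t\}$ because $(h^{(\gamma)}(X_{t\wedge T_a\wedge T_b}))$ is a supermartingale of bounded expectation). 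Then Scheffé/Vitali gives $L^1$ convergence, and dividing numerator by denominator finishes the proof, the denominator being the $F_t\equiv1$ specialization.

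I expect the main obstacle to be the (TH) case: controlling the exit probabilities from the shrinking-complement interval $(-d,c)$ uniformly as $c,d\to\infty$ along the constrained direction $(d-c)/(c+d)\to\gamma$, and in particular proving that the limit is genuinely $h^{(\gamma)}$ rather than a direction-dependent object outside the stated family — this requires a careful two-sided version of the resolvent/renewal asymptotics and is where the assumption (A) (guaranteeing a nice resolvent density and hence the existence and regularity of the renewal-type function $h^{(\gamma)}$) is essential. The secondary difficulty is the nonnegativity/harmonicity of $\varphi_{a,b}^{(\gamma)}$, but that I expect to dispatch via the probabilistic representation as a limit of survival probabilities, which are automatically nonnegative and satisfy the killed-mean-value property.
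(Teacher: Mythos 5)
Your overall strategy --- normalize the survival probability $\P_y(\tau<T_a\wedge T_b)$, prove pointwise and $L^1(\P_x)$ convergence to $\varphi^{(\gamma)}_{a,b}$, then divide numerator by denominator --- is the paper's strategy (the general reduction is Theorems \ref{t2}--\ref{t5}, the convergences are Theorems \ref{3.1}--\ref{3.2}, with (Ex) and (OH) imported from prior work and (TH), (IL) obtained by essentially the strong-Markov decompositions you describe). However, there is one genuine gap. The identity you base the reduction on,
\begin{align*}
\P_x[F_t\mid\tau<T_a\wedge T_b]=\frac{\P_x\left[F_t\,\P_{X_t}(\tau<T_a\wedge T_b)1_{\{T_a\wedge T_b>t\}}\right]}{\P_x(\tau<T_a\wedge T_b)},
\end{align*}
is not an identity: the true numerator also contains the contribution of the event $\{\tau\le t\}$, on which the clock has already rung before time $t$ and the Markov rewriting does not apply. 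One must prove that this contribution, multiplied by the normalization, tends to $0$. For (Ex) this is cheap, since $r_q(0)\,\P_x(T_a\wedge T_b>\bm{e}_q,\ \bm{e}_q\le t\mid\F_t)\le qr_q(0)\,t\to0$ by $qr_q(0)\to 0$. But for (OH), (TH), (IL) the term is $h^B(c)\P_x(T_a\wedge T_b>T_c,\ T_c\le t)$ and no soft bound is available in general: the paper controls it by first establishing (from the exponential-clock case) that $\varphi^{(0)}_{a,b}(X_s)1_{\{T_a\wedge T_b>s\}}$ is a martingale, applying optional sampling at $T_c$, and using the linear growth $\varphi^{(0)}_{a,b}(c)/c\to 1/m^2$ together with $h^B(c)/c\to 2/m^2$ to rewrite the term as $\tfrac{h^B(c)}{\varphi^{(0)}_{a,b}(c)}\P_x[\varphi^{(0)}_{a,b}(X_t)1_{\{T_a\wedge T_b>t\}},\ T_c\le t]\to0$. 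This is exactly why the paper's general theorems for the non-exponential clocks carry the extra hypothesis that $\varphi^{(0)}_{a,b}(x)/x$ converges as $|x|\to\infty$ --- a verification (done via Proposition \ref{0-2.1}(3)) that your sketch omits entirely.

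Two smaller points. First, the correct normalization for (Ex) is $\rho=r_q(0)$ itself, so that $r_q(0)(1-\P_x[e^{-qT}])=h_q(x)-\P_x[h_q(X_T)e^{-qT}]+\cdots$; your candidate $1/(qr_q(0))$ has a different order of magnitude in general. Second, you need not prove nonnegativity or harmonicity of $\varphi^{(\gamma)}_{a,b}$ separately --- as you suspect, nonnegativity is automatic from the representation as a limit of nonnegative quantities, and the martingale property is inherited from the $L^1$-convergence of the conditional-probability martingales $M_t^q$ (resp.\ $M_t^c$), which is what identifies the denominator as $\varphi^{(\gamma)}_{a,b}(x)$.
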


Next, we consider conditioning to avoid $n$ points (see, Section \ref{S4} for the details).
\begin{thm}
\label{t1.2}
For distinct points $a_1,...,a_n\in \R$, we denote $A_n:=\{a_1,...,a_n\}$. For $-1\le \gamma\le 1$, we define a function
\begin{align*}
\varphi_{A_n}^{(\gamma)}(x):&=h^{(\gamma)}(x-a_n)-\sum_{k=1}^{n-1} h^{(\gamma)}(a_k-a_n)\P_x(T_{a_k}= T_{A_n})\\
&=h^{(\gamma)}(x-a_n)-\P_x[h^{(\gamma)}(X_{T_{A_n}}-a_n)].
\stepcounter{equation}\tag{\theequation}
\end{align*}
Then, the following assertions hold:
\begin{align}
  \text{(Ex)}&\ \displaystyle \lim_{q\to 0+}\mathbb{P}_x[F_t|\ \bm{e}_q<T_{A_n}]=\mathbb{P}_x\left[F_t \cdot\frac{\varphi_{A_n}^{(0)}(X_t)}{\varphi_{A_n}^{(0)}(x)}1_{\{T_{A_n}>t\}}\right],\\
  \text{(OH)}&\  \lim_{c\to \pm \infty}\mathbb{P}_x[F_t|\ T_c<T_{A_n}]=\mathbb{P}_x\left[F_t \cdot\frac{\varphi_{A_n}^{(\pm 1)}(X_t)}{\varphi_{A_n}^{(\pm 1)}(x)}1_{\{T_{A_n}>t\}}\right],\\
  \text{(TH)}&\  \lim_{(c,d)\stackrel{(\gamma)}{\to} \infty}\mathbb{P}_x[F_t|\ T_c\wedge T_{-d}<T_{A_n}]=\mathbb{P}_x\left[F_t \cdot\frac{\varphi_{A_n}^{(\gamma)}(X_t)}{\varphi_{A_n}^{(\gamma)}(x)}1_{\{T_{A_n}>t\}}\right],\\
  \text{(IL)}&\  \lim_{c\to \pm \infty}\mathbb{P}_x[F_t|\ \eta_u^c<T_{A_n}]=\mathbb{P}_x\left[F_t \cdot\frac{\varphi_{A_n}^{(\pm 1)}(X_t)}{\varphi_{A_n}^{(\pm 1)}(x)}1_{\{T_{A_n}>t\}}\right],
\end{align}
for all bounded $\F_t$-measurable functionals $F_t$ and for $x\in \R$ such that the denominator is not zero.
\end{thm}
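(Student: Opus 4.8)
The plan is to invoke the general Doob $h$-transform criterion of Section~\ref{S2-1}: it is enough to check that, for each of the four clocks and the indicated value of $\gamma$ (namely $\gamma=0$ for (Ex), $\gamma=\pm1$ for (OH) and (IL) according as $c\to\pm\infty$, and $\gamma$ as prescribed for (TH)), the function $\varphi^{(\gamma)}_{A_n}$ is non-negative and harmonic for the process killed on $A_n$ and that
\[
\rho^{(\gamma)}(\tau)\,\P_{X_t}(T_{A_n}>\tau)\ \longrightarrow\ \varphi^{(\gamma)}_{A_n}(X_t)
\]
holds $\P_x$-almost surely and in $L^1(\P_x)$ along the net defining the clock, where $\rho^{(\gamma)}$ is the attached normalizing net. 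The inputs I will use are the one-point avoidance results (see \cite{TY, Takeda} and the discussion around (\ref{b7})): $h^{(\gamma)}\ge0$, $h^{(\gamma)}(0)=0$, the process $(h^{(\gamma)}(X_{s\wedge T_0}))_{s\ge0}$ is a $\P_x$-martingale, and $\rho^{(\gamma)}(\tau)\P_y(T_0>\tau)\to h^{(\gamma)}(y)$ pointwise in $y$, $\P_x$-a.s.\ at $y=X_t$, and in $L^1(\P_x)$. By spatial homogeneity the same holds with $T_0$ and $h^{(\gamma)}(\cdot)$ replaced by $T_c$ and $h^{(\gamma)}(\cdot-c)$ for any $c\in\R$.

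For the convergence I peel off the point $a_n$. Since $T_{A_n}\le T_{a_n}$, for every clock
\[
\rho^{(\gamma)}(\tau)\,\P_y(T_{A_n}>\tau)=\rho^{(\gamma)}(\tau)\,\P_y(T_{a_n}>\tau)-\rho^{(\gamma)}(\tau)\,\P_y\!\big(T_{A_n}\le\tau<T_{a_n}\big),
\]
and the first term tends to $h^{(\gamma)}(y-a_n)$ by the one-point result for $T_{a_n}$. On the event $\{T_{A_n}\le\tau<T_{a_n}\}$ neither $a_n$ nor — in cases (OH), (TH), (IL) — the level defining the clock has been reached by time $T_{A_n}$, so $X_{T_{A_n}}\in\{a_1,\dots,a_{n-1}\}$, and the strong Markov property at $T_{A_n}$ (using the memorylessness of $\bm e_q$ for (Ex), and the absence of local time at the clock's level before $T_{A_n}$ for the other clocks) gives
\[
\P_y\!\big(T_{A_n}\le\tau<T_{a_n}\big)=\sum_{k=1}^{n-1}p^{(\tau)}_k(y)\,\P_{a_k}(T_{a_n}>\tau),
\]
with $p^{(\tau)}_k(y)=\P_y[e^{-qT_{A_n}};X_{T_{A_n}}=a_k]$ for (Ex) and $p^{(\tau)}_k(y)=\P_y(T_{A_n}\le\tau,\,X_{T_{A_n}}=a_k)$ otherwise; in every case $p^{(\tau)}_k(y)\to\P_y(X_{T_{A_n}}=a_k)$ along the net, because $T_{A_n}<\infty$ $\P_y$-a.s.\ and the clock tends to $\infty$ a.s. Multiplying by $\rho^{(\gamma)}(\tau)$ and using the one-point result from each of the finitely many points $a_k$, this term converges to $\sum_{k=1}^{n-1}h^{(\gamma)}(a_k-a_n)\P_y(X_{T_{A_n}}=a_k)=\P_y[h^{(\gamma)}(X_{T_{A_n}}-a_n)]$ (the $k=n$ term vanishing since $h^{(\gamma)}(0)=0$, which is also why the two displayed expressions for $\varphi^{(\gamma)}_{A_n}$ coincide). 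Hence $\rho^{(\gamma)}(\tau)\P_y(T_{A_n}>\tau)\to\varphi^{(\gamma)}_{A_n}(y)$ for every $y$; the case $n=2$ recovers Theorem~\ref{t1.1}. Non-negativity of $\varphi^{(\gamma)}_{A_n}$ follows from this limit, and harmonicity for the killed process is a direct consequence of the representation $\varphi^{(\gamma)}_{A_n}=h^{(\gamma)}(\cdot-a_n)-\P_\cdot[h^{(\gamma)}(X_{T_{A_n}}-a_n)]$, the strong Markov property, and the martingale property of $(h^{(\gamma)}(X_{s\wedge T_{a_n}}-a_n))_s$.

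It remains to upgrade the pointwise limit to the a.s.\ and $L^1$ statements at $y=X_t$. The a.s.\ convergence is immediate: for $\P_x$-a.e.\ $\omega$, each of $p^{(\tau)}_k(X_t(\omega))$ and $\rho^{(\gamma)}(\tau)\P_{a_k}(T_{a_n}>\tau)$ is a convergent net of reals, and $\rho^{(\gamma)}(\tau)\P_{X_t}(T_{a_n}>\tau)\to h^{(\gamma)}(X_t-a_n)$ $\P_x$-a.s. For $L^1(\P_x)$, the bound $T_{A_n}\le T_{a_n}$ gives $0\le\rho^{(\gamma)}(\tau)\P_{X_t}(T_{A_n}>\tau)\le\rho^{(\gamma)}(\tau)\P_{X_t}(T_{a_n}>\tau)$, and the dominating net converges a.s.\ and in $L^1(\P_x)$ to $h^{(\gamma)}(X_t-a_n)$; the generalized dominated convergence theorem with an $L^1$-convergent majorant then yields $\rho^{(\gamma)}(\tau)\P_{X_t}(T_{A_n}>\tau)\to\varphi^{(\gamma)}_{A_n}(X_t)$ in $L^1(\P_x)$. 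Feeding this into the criterion of Section~\ref{S2-1} finishes the proof. The step I expect to be the main obstacle is the ``restart of the clock at $T_{A_n}$'' underlying the strong Markov identity above: for (Ex) it is just memorylessness, but for (OH), (TH) and (IL) one must verify that on the relevant event the defining level of the clock (the point $c$, the pair $(c,-d)$, or the level $c$ of the inverse local time) has not yet been reached at time $T_{A_n}$, so that $\tau\circ\theta_{T_{A_n}}$ is again a clock of the same type started from $X_{T_{A_n}}$, and then that the matching one-point convergence from the points $a_k$ holds in the required sense. The rest is routine bookkeeping.
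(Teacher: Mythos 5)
Your overall scaffolding is the paper's: establish the a.s.\ and $L^1(\P_x)$ convergence of the normalized avoidance probabilities and feed it into the general conditioning theorems of Section~\ref{S2-1}. But your convergence argument takes a genuinely different route. The paper (Propositions \ref{dp1} and \ref{dp2}) inducts on $n$, writing $\P_x(T_{A_n}>\tau)=\P_x(T_{A_{n-1}}>\tau)-\P_x(T_{A_{n-1}}>\tau>T_{a_n})$ and applying the strong Markov property at $T_{a_n}$ to reduce to the $(n-1)$-point case, with the two-point Theorem \ref{3.1} as base case; you instead apply the strong Markov property at $T_{A_n}$ and reduce in one step to the one-point results from each $a_k$. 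Your route is shorter and makes the formula $\varphi^{(\gamma)}_{A_n}=h^{(\gamma)}(\cdot-a_n)-\P_\cdot[h^{(\gamma)}(X_{T_{A_n}}-a_n)]$ appear directly, and your $L^1$ step via Pratt's lemma with majorant $\rho(\tau)\P_{X_t}(T_{a_n}>\tau)$ is cleaner than the paper's telescoping triangle-inequality induction. The price is that you must restart the clock at the \emph{random} point $X_{T_{A_n}}$ rather than at the fixed point $a_n$, which is where your argument has a real gap.

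Concretely, for the inverse local time clock your key identity fails as stated: on $\{T_{A_n}\le\eta_u^c\}$ it is \emph{not} true that the level $c$ has not been visited before $T_{A_n}$, so the residual clock after $T_{A_n}$ is $\eta^c_{u-L^c_{T_{A_n}}}$, not a fresh $\eta_u^c$, and the factorization $\P_y(T_{A_n}\le\eta_u^c<T_{a_n})=\sum_k p_k^{(\tau)}(y)\P_{a_k}(T_{a_n}>\eta_u^c)$ is false. You flag this as ``to be verified,'' but the verification you describe (that the level has not been reached) is precisely what does not hold. The fix is either to split off $\{T_c<T_{A_n}\}$ and show its contribution is $o(1/h^B(c))$ (using $h^B(c)\P_y(T_c<T_{A_n})=O(1)$ together with $\P_c(T_{A_{n-1}}\le\eta_u^c)\to 0$), or to do what the paper does in Theorem \ref{t5} and (\ref{4.10})--(\ref{4.11}): apply the strong Markov property at $T_c$ first, so that $\P_y(T_{A_n}>\eta_u^c)=\P_y(T_{A_n}>T_c)\P_c(T_{A_n}>\eta_u^c)$ with $\P_c(T_{A_n}>\eta_u^c)\to1$, reducing (IL) to (OH). For (OH) and (TH) your restart argument is sound since $T_{A_n}\le T_c$ (resp.\ $T_{A_n}\le T_c\wedge T_{-d}$) does preclude an earlier visit to the clock's level(s). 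Finally, note that Theorems \ref{t3}--\ref{t5} carry the extra hypothesis that $\varphi_e^T(x)/x$ converges as $|x|\to\infty$; the paper checks this via (\ref{0-27}) in its proof of Theorem \ref{t1.2}, and you should record that verification rather than citing only non-negativity, harmonicity and the two convergences.
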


Next, we consider conditioning to avoid bounded $F_\sigma$-sets (see, Section \ref{S5} for the details).
\begin{thm}
\label{t1.3}
Let $A$ is a bounded $F_\sigma$-set which contains $0$. We define a function
\begin{align}
\varphi_A(x):=h(x)-\P_x[h(X_{T_A})].
\end{align}
Then, the following assertion holds:
\begin{align}
\lim_{q\to 0+}\mathbb{P}_x[F_t|\ \bm{e}_q<T_A]=\mathbb{P}_x\left[F_t \cdot\frac{\varphi_A(X_t)}{\varphi_A(x)}1_{\{T_A>t\}}\right]
\end{align}
for all bounded $\F_t$-measurable functionals $F_t$ and for $x\in \R$ such that the denominator is not zero.
\end{thm}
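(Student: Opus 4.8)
\emph{Strategy.} The plan is to follow the general recipe of Section~\ref{S2-1}: it suffices to produce a normalising function $\rho(q)$ with $\rho(q)\to\infty$ as $q\to 0+$ such that
\[
\rho(q)\,\P_y(\bm{e}_q<T_A)\longrightarrow \varphi_A(y)\qquad(q\to 0+)
\]
locally uniformly in $y$, and such that $\rho(q)\,\P_{X_t}(\bm{e}_q<T_A)\to\varphi_A(X_t)$ holds $\P_x$-a.s.\ and in $L^1(\P_x)$; once this is in hand, the passage to the stated $h$-transform formula — including the control of the contribution of the event $\{T_A\le t\}$ — is exactly the argument of Section~\ref{S2-1}. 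We take $\rho(q)=u^q(0)$, where $u^q$ is the $q$-resolvent density, which under~(A) is continuous and bounded. Two structural facts are used repeatedly: since $0\in A$ and the process is recurrent with every point regular for itself, $T_A\le T_{\{0\}}<\infty$ $\P_y$-a.s.; and since $A$ is bounded, $X_{T_A}\in\overline{A}$ with $\overline{A}$ compact.

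\emph{An exact identity at fixed $q$.} Put $h^q(y):=u^q(0)\,\P_y(\bm{e}_q<T_{\{0\}})$; by the classical first-passage formula for a single regular point this equals $u^q(0)-u^q(-y)\ge 0$, and $h^q\to h$ locally uniformly as $q\to 0+$, where $h$ is the potential kernel of~(\ref{b7}) (the function $h^{(0)}$); this is the input from the single-point theory recalled in Section~\ref{S2-1}. The strong Markov property at $T_A$, applied to the resolvent kernel, gives the balayage decomposition $u^q(z-y)=u^q_A(y,z)+\P_y[e^{-qT_A}u^q(z-X_{T_A})]$, where $u^q_A(y,\cdot)$ is the density of the $q$-resolvent killed at $A$. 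Evaluated at $z=0\in A$, the killed density vanishes, $u^q_A(y,0)=0$, because the process accumulates no occupation density at $0$ before the time $T_A\le T_{\{0\}}$; hence $u^q(-y)=\P_y[e^{-qT_A}u^q(-X_{T_A})]$. Inserting this, together with $u^q(0)=h^q(\cdot)+u^q(-\cdot)$, into $u^q(0)\,\P_y(\bm{e}_q<T_A)=u^q(0)-u^q(0)\,\P_y[e^{-qT_A}]$ yields the key identity
\[
u^q(0)\,\P_y(\bm{e}_q<T_A)=h^q(y)-\P_y[e^{-qT_A}\,h^q(X_{T_A})].
\]
For a finite set $A$ this collapses to the formulas of Theorems~\ref{t1.1}--\ref{t1.2}.

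\emph{Passing to the limit.} In the identity, $h^q(y)\to h(y)$. In the subtracted term, $e^{-qT_A}\to 1$ $\P_y$-a.s.\ and $h^q(X_{T_A})\to h(X_{T_A})$; since $X_{T_A}$ lives in the compact set $\overline{A}$ on which $h^q\to h$ uniformly, the integrand $e^{-qT_A}h^q(X_{T_A})$ is bounded in absolute value by $M:=\sup_{q\le q_0}\sup_{z\in\overline{A}}|h^q(z)|<\infty$ for all $q\le q_0$, so dominated convergence gives $\P_y[e^{-qT_A}h^q(X_{T_A})]\to\P_y[h(X_{T_A})]$, and the convergence is in fact locally uniform in $y$. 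Hence $u^q(0)\,\P_y(\bm{e}_q<T_A)\to h(y)-\P_y[h(X_{T_A})]=\varphi_A(y)$ locally uniformly; in particular $\varphi_A$ is finite and nonnegative, vanishes on $A$, and is harmonic for the process killed at $A$. The $\P_x$-a.s.\ convergence along $(X_t)$ is the pointwise statement applied at the point $X_t(\omega)$. For the $L^1(\P_x)$ convergence, since $h^q\ge 0$ the key identity gives $0\le u^q(0)\,\P_{X_t}(\bm{e}_q<T_A)\le h^q(X_t)$, and $\{h^q(X_t):q\le q_0\}$ is uniformly integrable because $h^q(X_t)=u^q(0)\,\P_{X_t}(\bm{e}_q<T_{\{0\}})\to h(X_t)$ in $L^1(\P_x)$ by the single-point (Ex) result; an a.s.\ convergent family dominated by a uniformly integrable one converges in $L^1$. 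Feeding the a.s.\ and $L^1(\P_x)$ convergence into the machinery of Section~\ref{S2-1} gives the stated formula for $x$ with $\varphi_A(x)>0$.

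\emph{Main obstacle.} I expect the delicate part to be the measure-theoretic treatment of a general bounded $F_\sigma$-set: checking that $T_A$ is a stopping time with $X_{T_A}\in\overline{A}$, and that the balayage decomposition and the vanishing $u^q_A(y,0)=0$ hold at this level of generality — which one can first verify for an increasing sequence of closed subsets $F_n\uparrow A$, using $T_{F_n}\downarrow T_A$, and then pass to the limit — together with confirming that $\varphi_A$ is genuinely a nonnegative function, harmonic for the killed process and not identically zero. The single step where boundedness of $A$ is indispensable is the uniform bound $M<\infty$ on $h^q$ over the compact set $\overline{A}$ that licenses the dominated convergence in the limiting step; for an unbounded set this control is unavailable.
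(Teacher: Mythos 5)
Your proposal follows essentially the same route as the paper: the key identity $r_q(0)\,\P_y(\bm{e}_q<T_A)=h_q(y)-\P_y[e^{-qT_A}h_q(X_{T_A})]$ obtained from the vanishing of the killed resolvent density at $0$ (the paper's Lemma \ref{el1}, proved there via $q$-excessive functions and $T_A\le T_0$, which is the rigorous version of your ``no occupation density at $0$'' heuristic), then dominated convergence using boundedness of $A$, then the general machinery of Theorem \ref{t2}. Your $L^1(\P_x)$ step---dominating by $h_q(X_t)$, which converges in $L^1$ by the single-point result---is a minor (and clean) variant of the paper's Proposition \ref{ep2}, but the argument is the same in substance.
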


Finally, we consider conditioning to avoid an integer lattice (see, Section \ref{S6} for the details). For the proof, we use a result of Isozaki \cite{Iso}.
\begin{thm}
\label{1.4}
Let $L>0$ and set $L\Z:=\{Ln\in \R;\ n\in \Z\}$. We further assume the following:
\begin{align}
\sum_{n\in \Z}\left|\frac{1}{q+\Psi(\frac{2n\pi}{L})}\right|<\infty\qquad \text{for}\ q>0.
\end{align}
We define a function
\begin{align}
\varphi_{L\Z}(x):=\sum_{n\in \Z\setminus \{0\}}\frac{1-e^{\frac{2n\pi x}{L}i}}{\Psi(\frac{2n\pi}{L})}.
\end{align}
Then, the following assertion holds:
\begin{align}
\displaystyle \lim_{q\to 0+}\mathbb{P}_x[F_t,\ t<\bm{e}_q|\ \bm{e}_q<T_{L\Z}]=\mathbb{P}_x\left[F_t \cdot\frac{\varphi_{L\Z}(X_t)}{\varphi_{L\Z}(x)}1_{\{T_{L\Z}>t\}}\right]
\end{align}
for all bounded $\F_t$-measurable functionals $F_t$ and for $x\in \R$ such that the denominator is not zero.
\end{thm}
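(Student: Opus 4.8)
The plan is to follow the general scheme of Section~\ref{S2-1}: reduce everything to the behaviour of $\mathbb{P}_x(\bm{e}_q<T_{L\Z})$ as $q\to0+$, after which the $h$-transform drops out of the Markov property. First I would record that, for a bounded $\F_t$--measurable $F_t$, the lack of memory of $\bm{e}_q$ together with the Markov property of $X$ at the fixed time $t$ gives, using $T_{L\Z}\circ\theta_t=T_{L\Z}-t$ on $\{T_{L\Z}>t\}$ and $\mathbb{E}_y[1-e^{-qT_{L\Z}}]=\mathbb{P}_y(\bm{e}_q<T_{L\Z})$,
\[
\mathbb{P}_x[F_t,\ t<\bm{e}_q<T_{L\Z}]=e^{-qt}\,\mathbb{E}_x\!\left[F_t\,1_{\{T_{L\Z}>t\}}\,\mathbb{P}_{X_t}(\bm{e}_q<T_{L\Z})\right].
\]
Dividing by $\mathbb{P}_x(\bm{e}_q<T_{L\Z})$, it then suffices to prove: \textbf{(i)} $\varphi_{L\Z}$ is bounded, continuous, and $>0$ on $\R\setminus L\Z$; \textbf{(ii)} $q^{-1}\mathbb{P}_y(\bm{e}_q<T_{L\Z})\to\varphi_{L\Z}(y)$ as $q\to0+$, for every $y\notin L\Z$; and \textbf{(iii)} $\mathbb{P}_y(\bm{e}_q<T_{L\Z})\le Cq$ for all $y\in\R$ and all small $q$, for some constant $C$. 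Granting these, on $\{T_{L\Z}>t\}$ the integrand converges pointwise to $F_t\,\varphi_{L\Z}(X_t)/\varphi_{L\Z}(x)$ and, for small $q$, is dominated by $2C\|F_t\|_\infty/\varphi_{L\Z}(x)$ — by (iii) in the numerator and, from (ii), $\mathbb{P}_x(\bm{e}_q<T_{L\Z})\ge\frac12\varphi_{L\Z}(x)\,q$ in the denominator — so dominated convergence together with $e^{-qt}\to1$ gives the assertion.

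For \textbf{(i)--(iii)} I would pass to the torus: set $\mathbb{T}_L:=\R/L\Z$ and $\bar X_t:=X_t\bmod L\Z$, a \Levy\ process on the compact group $\mathbb{T}_L$. Since $\{0\}\subset L\Z$ and $0$ is regular for itself for $X$, the first hitting time of $\bar 0$ by $\bar X$ equals $T_{L\Z}$, and by translation invariance on $\mathbb{T}_L$ every point of $\mathbb{T}_L$ is regular for itself for $\bar X$. The hypothesis $\sum_{n\in\Z}|1/(q+\Psi(2n\pi/L))|<\infty$ says exactly that $\bar X$ has the bounded continuous $q$--resolvent density $\bar u_q(y):=\sum_{n\in\Z}e^{-2n\pi i y/L}/(q+\Psi(2n\pi/L))$, so, $\bar 0$ being regular for itself, the standard hitting identity gives $\mathbb{E}_x[e^{-qT_{L\Z}}]=\bar u_q(-\bar x)/\bar u_q(0)$ and hence
\[
\mathbb{P}_x(\bm{e}_q<T_{L\Z})=\frac{\bar u_q(0)-\bar u_q(-\bar x)}{\bar u_q(0)}=\frac{\sum_{n\neq0}\bigl(1-e^{2n\pi i x/L}\bigr)/\bigl(q+\Psi(2n\pi/L)\bigr)}{\sum_{n\in\Z}1/\bigl(q+\Psi(2n\pi/L)\bigr)}.
\]
Here $|\Psi(2n\pi/L)|\to\infty$ (else the summability hypothesis fails) and $\Psi(2n\pi/L)\neq0$ for $n\neq0$ (otherwise $X$ would be lattice-valued compound Poisson, so $0$ would not be regular for itself), whence $\sum_{n\neq0}1/|\Psi(2n\pi/L)|<\infty$; since $|q+\Psi|\ge|\Psi|$ because $\operatorname{Re}\Psi\ge0$, dominated convergence in the numerator — whose $n=0$ term vanishes — gives the limit $\varphi_{L\Z}(x)$, while the $n=0$ term $1/q$ makes $\bar u_q(0)$ real with $\bar u_q(0)\ge1/q$ and $q\bar u_q(0)\to1$. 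This yields (ii), and also (iii) since the numerator is bounded by $2\sum_{n\neq0}1/|\Psi(2n\pi/L)|$ uniformly in $q\ge0$. The same bound makes the series defining $\varphi_{L\Z}$ absolutely and uniformly convergent, so $\varphi_{L\Z}$ is bounded and continuous; it is real-valued since $\overline{\Psi(\lambda)}=\Psi(-\lambda)$; and, as $q^{-1}(1-e^{-qT_{L\Z}})\uparrow T_{L\Z}$ when $q\downarrow0$, monotone convergence identifies the limit in (ii) as $\mathbb{E}_y[T_{L\Z}]$. Thus $\varphi_{L\Z}(y)=\mathbb{E}_y[T_{L\Z}]$, which is finite because $\bar X$ is positively recurrent on the compact $\mathbb{T}_L$, and $>0$ for $y\notin L\Z$; this gives (i).

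I would stress that the restriction to $\{t<\bm{e}_q\}$ is genuinely needed here, unlike in Theorems~\ref{t1.1}--\ref{t1.3}: because $\bar X$ is positively recurrent one has $q\bar u_q(0)\to1\neq0$, so $\mathbb{P}_x(\bm{e}_q<T_{L\Z})$ is of \emph{exact} order $q$, and the complementary term $\mathbb{P}_x[F_t,\ \bm{e}_q\le t,\ \bm{e}_q<T_{L\Z}]=\int_0^t q e^{-qs}\,\mathbb{E}_x[F_t\,1_{\{T_{L\Z}>s\}}]\,ds$, after division by $\mathbb{P}_x(\bm{e}_q<T_{L\Z})$, tends to $\varphi_{L\Z}(x)^{-1}\int_0^t\mathbb{E}_x[F_t\,1_{\{T_{L\Z}>s\}}]\,ds\neq0$ rather than vanishing; intersecting with $\{t<\bm{e}_q\}$ deletes precisely this term — which is also why the limiting object is a \emph{defective} $h$-transform of mass $1-\mathbb{E}_x[t\wedge T_{L\Z}]/\varphi_{L\Z}(x)$, consistent with $\varphi_{L\Z}$ satisfying $\mathbb{E}_x[\varphi_{L\Z}(X_t)1_{\{T_{L\Z}>t\}}]+\mathbb{E}_x[t\wedge T_{L\Z}]=\varphi_{L\Z}(x)$ and not being harmonic for $X$ killed on $L\Z$. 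The substantive point — and the step I expect to be the main obstacle — is making \textbf{(ii)--(iii)} rigorous: that $\bar X$ really carries the claimed resolvent density, that the hitting identity for a regular point holds on the torus, and that $q\to0$ can be passed inside the Fourier sum while the finitely many low modes, where $\operatorname{Re}\Psi(2n\pi/L)$ may be small or vanish in degenerate cases, are controlled. This is exactly what the result of Isozaki~\cite{Iso} on the first hitting time of a lattice by a one-dimensional \Levy\ process provides, and I would invoke it to obtain $q^{-1}\mathbb{P}_x(\bm{e}_q<T_{L\Z})\to\varphi_{L\Z}(x)$ and the bound (iii); everything else is the soft Markov-property and dominated-convergence reasoning above.
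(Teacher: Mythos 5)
Your proposal is correct and follows essentially the same route as the paper: reduce via the Markov property and lack of memory to the asymptotics of $\P_{X_t}(\bm{e}_q<T_{L\Z})$, obtain those from Isozaki's identity $\P_x[e^{-qT_{L\Z}}]=R_q(x)/R_q(0)$ together with $qR_q(0)\to 1$ and dominated convergence in the Fourier sum, and your normalization by $q^{-1}$ is equivalent to the paper's normalization by $R_q(0)$. The extras you supply — the uniform bound $\P_y(\bm{e}_q<T_{L\Z})\le Cq$, the identification $\varphi_{L\Z}(y)=\P_y[T_{L\Z}]$ with its positivity off $L\Z$, and the explanation of why the event $\{t<\bm{e}_q\}$ must be retained — are sound and consistent with (indeed slightly sharpen) the paper's Propositions \ref{fp1}--\ref{fp2} and Remark 6.5.
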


\subsection*{Organization}
This paper is organized as follows. In Section \ref{S2}, we prepare some general results of \Levy\ processes. In Section \ref{S2-1}, we prove general theorem for conditioning problems. In Sections \ref{S3}, \ref{S4}, \ref{S5}, and \ref{S6}, we discuss the conditioning to avoid a two points, a $n$ points, bounded $F_\sigma$-sets, and an integer lattice, respectively.


\section{Preliminaries}
\label{S2}
Let $((X_t)_{t\ge 0},(\P_x)_{x\in \R})$ be the canonical representation of a one-dimensional \Levy\ process with $X_0=x$, $\P_x$-a.s. For $t\ge 0$, we denote by $\F_t^X:=\sigma(X_s,\ 0\le s\le t)$ and write $\F_t=\bigcap_{s>t}\F_s^X.$ For a set $A\subset \R$, let $T_A$ be the first hitting time of $A$ for $(X_t)$, i.e.,
\begin{align}
T_A:=\inf\{t\ge 0;\ X_t\in A\}.
\end{align}
For simplicity, we denote $T_{\{a\}}$ as $T_a$ for $a\in \R.$ For $\lambda\in \R$, we denote by $\Psi(\lambda)$ the characteristic exponent of $(X_t)$, i.e., $\Psi(\lambda)$ is defined by
\begin{align}
\P_0\left[e^{i\lambda X_t}\right]=e^{-t\Psi(\lambda)}\qquad \text{for}\ t\ge 0.
\end{align}

Throughout this paper, we always assume $((X_t),\P_0)$ is recurrent, and always assume the condition
\begin{align}
\textbf{(A)}\ \int_0^\infty\left|\frac{1}{q+\Psi(\lambda)}\right|d\lambda<\infty\qquad \text{for}\ q>0.
\end{align}
Then, $(X_t)$ has a bounded continuous resolvent density $r_q$. It is defined by
\begin{align}
\int_{\R}f(x)r_q(x)dx=\P_0\left[\int_0^\infty e^{-qt}f(X_t)dt\right]\qquad \text{for}\ q>0\ \text{and}\ f\ge 0
\end{align}
(see, e.g., Theorems II.16 and II.19 of \cite{Ber}). It is known that the equation between the first hitting time of $a$ and the resolvent density:
\begin{align}
\label{b2}
\P_x\left[e^{-qT_a}\right]=\frac{r_q(a-x)}{r_q(0)}\qquad \text{for}\ q>0\ \text{and}\ x\in \R
\end{align}
(see, e.g., Corollary II.18 of \cite{Ber}).

We define 
\begin{align}
h_q(x):=r_q(0)-r_q(-x)\qquad \text{for}\ q>0\ \text{and}\ x\in \R.
\end{align}
It is clear that $h_q(0)=0$, and by (\ref{b2}), we have $h_q\ge 0.$ The following proposition plays a key role in our results:

\begin{prop}[Theorem 1.1 and Theorem 1.2 of \cite{TY}]
\label{0-2.1}
The following assertions hold:
\begin{enumerate}
  \item For any $x\in \R$, the limit $h(x):=\lim_{q\to 0+}h_q(x)$ exists and is finite;
  \item $h$ is continuous and subadditive on $\R.$
  \item It holds that
  \begin{align}
  \label{0-27}
    \lim_{x\to \pm \infty}\frac{h(x)}{|x|}=\frac{1}{m^2}\in [0,\infty),
  \end{align}
  where $m^2:=\P_0[X_1^2]$ is a variance of $X_1$.
\end{enumerate}
\end{prop}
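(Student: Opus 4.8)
My plan uses two expressions for $h_q$. From (\ref{b2}) with $a=0$ we have $\P_x[e^{-qT_0}]=r_q(-x)/r_q(0)$, so
\begin{align*}
h_q(x)=r_q(0)-r_q(-x)=r_q(0)\,\P_x\!\left[1-e^{-qT_0}\right],
\end{align*}
while Fourier inversion of the resolvent density gives, under \textbf{(A)} (and using $|q+\Psi(\lambda)|\ge|\Psi(\lambda)|$, valid since $\mathrm{Re}\,\Psi\ge0$, for absolute convergence),
\begin{align*}
h_q(x)=\frac{1}{2\pi}\int_{\R}\frac{1-e^{i\lambda x}}{q+\Psi(\lambda)}\,d\lambda=\frac{1}{2\pi}\int_{\R}\mathrm{Re}\,\frac{1-e^{i\lambda x}}{q+\Psi(\lambda)}\,d\lambda .
\end{align*}
Since $r_q\ge0$ we get $0\le h_q(x)\le r_q(0)$ for all $x$; also $h_q(0)=0$ and $h_q$ is continuous because $r_q$ is.

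\textbf{Subadditivity.} For fixed $q>0$ I would show $h_q$ subadditive. For distinct $b,c$, decompose $\P_x[1-e^{-qT_c}]$ over $\{T_c<T_b\}$ (where $1-e^{-qT_c}\le1-e^{-qT_b}$) and $\{T_b<T_c\}$ (where $1-e^{-qT_c}=(1-e^{-qT_b})+e^{-qT_b}(1-e^{-qT_c})\circ\theta_{T_b}$, and the strong Markov property at $T_b$ applies since $\{T_b<T_c\}\in\F_{T_b}$ and $X_{T_b}=b$); this yields $\P_x[1-e^{-qT_c}]\le\P_x[1-e^{-qT_b}]+\P_b[1-e^{-qT_c}]$. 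Taking $b=0$, $c=-y$, using spatial homogeneity $\P_x[1-e^{-qT_{-y}}]=\P_{x+y}[1-e^{-qT_0}]$, and multiplying by $r_q(0)$ gives $h_q(x+y)\le h_q(x)+h_q(y)$. Hence, once $h(x)=\lim_{q\to0+}h_q(x)$ is shown to exist finitely, $h$ is subadditive with $h(0)=0$, and from $|h(x)-h(y)|\le h(x-y)\vee h(y-x)$ the continuity claim (2) reduces to $h(z)\to0$ as $z\to0$.

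\textbf{Existence and finiteness (1).} I would pass to the limit in the Fourier integral. On $\{|\lambda|>1\}$ the integrand is dominated uniformly in $q\in(0,1]$ by $2/|\Psi(\lambda)|\in L^1(\{|\lambda|>1\})$ (a consequence of \textbf{(A)} and of the non-lattice property forced by the regularity of points), so dominated convergence controls that part. On $\{|\lambda|\le1\}$ one first notes $\mathrm{Re}\,\Psi(\lambda)\ge c_0\lambda^2$ for small $\lambda$: from the L\'evy--Khintchine triplet either the Gaussian coefficient is positive or $\int_{|y|\le\pi}y^2\,\nu(dy)>0$, the degenerate remaining case (compound Poisson plus drift) being incompatible with \textbf{(A)}. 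The step I expect to be the main obstacle is then producing a $q$-independent integrable majorant for $\mathrm{Re}\,\frac{1-e^{i\lambda x}}{q+\Psi(\lambda)}$ on $\{|\lambda|\le1\}$: there $\mathrm{Re}\,\frac{1}{\Psi}$ is itself non-integrable near $0$ (this is precisely recurrence), so one must exploit the cancellation $1-e^{i\lambda x}=O(\lambda)$ together with the constraint recurrence places on $\mathrm{Im}\,\Psi$ near $0$ (when $\mathrm{Re}\,\Psi(\lambda)\asymp\lambda^2$ this forces $\mathrm{Im}\,\Psi(\lambda)=o(\lambda^2)$); the crude bound $|1-e^{i\lambda x}|/|\Psi(\lambda)|\le|\lambda x|/(c_0\lambda^2)$ is not integrable and has to be refined via the interplay of $\mathrm{Re}\,\Psi$ and $\mathrm{Im}\,\Psi$. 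Given such a majorant, dominated convergence yields $h(x)=\frac{1}{2\pi}\int_{\R}\mathrm{Re}\,\frac{1-e^{i\lambda x}}{\Psi(\lambda)}\,d\lambda<\infty$, and the same bound, read as a function of $x$, gives $h(z)\to0$ as $z\to0$, hence (2). (Finiteness alone is also visible from $h_q(x)\le r_q^0(x,x)\uparrow g^0(x,x)<\infty$, where $r_q^0,g^0$ are the resolvent and Green densities of the process killed at $0$; and in the symmetric case $h_q(x)=\frac{1}{2\pi}\int_{\R}\frac{1-\cos\lambda x}{q+\Psi(\lambda)}d\lambda$ is nonincreasing in $q$, so (1) is immediate from monotone convergence. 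The general case needs the estimate above.)

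\textbf{Asymptotics (3).} Since $h\ge0$ is subadditive, Fekete's lemma gives that $\lim_{x\to+\infty}h(x)/x=\inf_{x>0}h(x)/x$ and $\lim_{x\to-\infty}h(x)/|x|$ exist in $[0,\infty)$. To identify both with $1/m^2$, substitute $\lambda=u/x$ (for $x>0$) in the integral representation of $h$:
\begin{align*}
\frac{h(x)}{x}=\frac{1}{2\pi}\int_{\R}\mathrm{Re}\!\left[\frac{1-e^{iu}}{u^2}\cdot\frac{(u/x)^2}{\Psi(u/x)}\right]du .
\end{align*}
By the L\'evy--Khintchine representation $\Psi(\mu)/\mu^2\to m^2/2\in(0,\infty]$ as $\mu\to0$ (when $m^2=\P_0[X_1^2]<\infty$ this uses recurrence $\Rightarrow\P_0[X_1]=0$, so $\Psi(\mu)=\tfrac{m^2}{2}\mu^2+o(\mu^2)$), hence $(u/x)^2/\Psi(u/x)\to2/m^2$ pointwise (read as $0$ if $m^2=\infty$), and the real part kills the leading $1/u$ singularity of $(1-e^{iu})/u^2$. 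A dominated-convergence argument of the same flavour as in step (1) --- controlled via $\mathrm{Re}\,\Psi(\lambda)\ge c_0\lambda^2$ near $0$, with the range $\{|u|>x\}$ contributing at most $\tfrac{2}{x}\int_{|\mu|>1}d\mu/|\Psi(\mu)|\to0$ --- then gives, using $\int_{\R}\frac{1-\cos u}{u^2}\,du=\pi$,
\begin{align*}
\frac{h(x)}{x}\longrightarrow\frac{1}{2\pi}\cdot\frac{2}{m^2}\int_{\R}\frac{1-\cos u}{u^2}\,du=\frac{1}{m^2},
\end{align*}
and the identical computation as $x\to-\infty$ gives (\ref{0-27}). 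Thus the only genuinely delicate point throughout is the near-origin Fourier estimate underpinning (1), which is reused for the continuity at $0$ in (2) and for the scaling limit in (3).
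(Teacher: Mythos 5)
This proposition is not proved in the paper at all: it is imported verbatim from Theorems 1.1 and 1.2 of Takeda--Yano \cite{TY}, so your attempt has to be measured against that work, whose entire substance is precisely the step you leave open. Your skeleton is the standard one and several pieces are correct and cleanly argued: the identity $h_q(x)=r_q(0)\P_x[1-e^{-qT_0}]$, the subadditivity of $h_q$ via splitting the path at $T_b$ (hence of $h$ once the limit exists), the reduction of continuity to $h(z)\to 0$ as $z\to 0$, the lower bound $\mathrm{Re}\,\Psi(\lambda)\ge c_0\lambda^2$ near $0$, Fekete's lemma for the existence of the limits in (3), and the observation that in the symmetric case (1) follows by monotone convergence.

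The genuine gap is the one you yourself flag and then assume away: a $q$-uniform, integrable control of $\mathrm{Re}\,\frac{1-e^{i\lambda x}}{q+\Psi(\lambda)}$ near $\lambda=0$ for asymmetric recurrent processes. This is not a technical footnote; it is the main content of Theorems 1.1--1.2 of \cite{TY} (building on Yano and Tsukada \cite{Tukada}), where the required ``interplay of $\mathrm{Re}\,\Psi$ and $\mathrm{Im}\,\Psi$'' is exactly what the key lemmas establish. Your only concrete remark in that direction --- that recurrence forces $\mathrm{Im}\,\Psi(\lambda)=o(\lambda^2)$ when $\mathrm{Re}\,\Psi(\lambda)\asymp\lambda^2$ --- covers only the finite-variance regime ($\mathrm{Re}\,\Psi(\lambda)\asymp\lambda^2$ near $0$ already implies $m^2<\infty$), and gives nothing when $m^2=\infty$, which is precisely the case where both (1) and the limit $h(x)/|x|\to 1/m^2=0$ in (3) are delicate; your dominated-convergence argument for (3) silently relies on the same missing estimate through the term $\lambda^2\,|\mathrm{Im}\,\Psi(\lambda)|/|\Psi(\lambda)|^2$ multiplying the $1/u$ singularity of $(1-e^{iu})/u^2$. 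There is also an unacknowledged gap in the ``easy'' region: you dominate the tail by $2/|\Psi(\lambda)|$ and call its integrability on $\{|\lambda|>1\}$ a consequence of \textbf{(A)}, but since $|q+\Psi|\ge|\Psi|$, i.e.\ $1/|q+\Psi|\uparrow 1/|\Psi|$ as $q\downarrow 0$, integrability of $1/|q+\Psi|$ for each fixed $q>0$ does not by itself give integrability of the monotone limit; ruling out exponents whose modulus dips toward $0$ along a sparse set of large frequencies requires its own lemma (non-compound-Poisson-ness alone does not do it). So the proposal reproduces the easy outer shell of the cited theorem but leaves its analytic core unproved.
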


We call $h$ the \emph{renormalized zero resolvent}. Further, we define
\begin{align}
\label{b7}
h^{(\gamma)}(x):=h(x)+\frac{\gamma}{m^2}x\qquad \text{for}\ -1\le \gamma\le 1.
\end{align}
Note that when $m^2=\infty$, $h^{(\gamma)}$ is independent of $\gamma.$

We can define a local time at $a\in \R$, which we denote by $(L_t^a)_{t\ge 0}$. It is defined by
\begin{align}
L_t^a:=\lim_{\e\to 0+}\frac{1}{2\e}\int_0^t1_{\{|X_s-a|<\e\}}ds.
\end{align}
It is known that $(L_t^a)$ is continuous in $t$ and satisfies 
\begin{align}
\mathbb{P}_x\left[\int_0^\infty e^{-qt}dL_t^a\right]=r_q(a-x)\qquad \text{for $q>0$ and $x\in \R$}
\end{align}
(see, e.g., Section V of \cite{Ber}). In particular, from this expression, $r_q(x)$ is non-decreasing as $q\to 0+.$ Moreover, we know that
\begin{align}
\label{b8}
\lim_{q\to 0+}qr_q(0)=0
\end{align}
(see, e.g., Lemma 15.5 of \cite{Tukada}).

Let $(\eta_l^a)_{l\ge 0}$ be an inverse local time, i.e.,
\begin{align}
\eta_l^a:=\inf\{t>0:\ L_t^a>l\}.
\end{align}
It is known that the process $((\eta_l^a),\mathbb{P}_a)$ is a possibly killed subordinator which has the Laplace exponent
\begin{align}
\mathbb{P}_a\left[e^{-q\eta_l^a}\right]=e^{-\frac{l}{r_q(0)}}\qquad \text{for $l>0$ and $q>0$}
\end{align}
(see, e.g., Proposition V.4 of \cite{Ber}).


\section{General theorems for the conditioning problems}
In this section, we prove a general theory of conditioning problems, since the discussion of the conditioning problem for each set has some common aspects.

First, we consider the case exponential clock.
\label{S2-1}
\begin{thm}
\label{t2}
Let $T$ be a stopping time. We assume that there exists the non-trivial and non-negative function $\varphi_e^T$ which satisfies the following:
\begin{align}
\lim_{q\to 0+}r_q(0)\P_{X_t}(T>\bm{e}_q)=\varphi_e^T(X_t)\ a.s.\ and\ in\ L^1(\P_x).
\end{align}
Then, $(\varphi_e^T(X_t)1_{\{T>t\}})_{t\ge 0}$ is a non-negative $((\F_t),\P_x)$-martingale, and if $\varphi_e^T(x)>0$, then it holds that
\begin{align}
\lim_{q\to 0+}\mathbb{P}_x[F_t|\ \bm{e}_q<T]=\mathbb{P}_x\left[F_t \cdot\frac{\varphi_e^T(X_t)}{\varphi_e^T(x)}1_{\{T>t\}}\right]
\end{align}
for all bounded $\F_t$-measurable functionals $F_t$.
\end{thm}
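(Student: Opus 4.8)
The plan is to reduce the whole statement to the single asymptotic identity
\[
r_q(0)\,\P_x\!\left[F_t\,1_{\{\bm e_q<T\}}\right]\ \longrightarrow\ \P_x\!\left[F_t\,\varphi_e^T(X_t)\,1_{\{T>t\}}\right]\qquad(q\to0+),
\]
valid for every $t\ge0$ and every bounded $\F_t$-measurable $F_t$, and then to read off the martingale property and the limit of the conditional law from it.

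To prove the identity I would split $1_{\{\bm e_q<T\}}=1_{\{\bm e_q\le t\}}1_{\{\bm e_q<T\}}+1_{\{\bm e_q>t\}}1_{\{\bm e_q<T\}}$. Because $F_t$ is bounded, the first term contributes at most $\|F_t\|_\infty\,\P_x(\bm e_q\le t)=\|F_t\|_\infty(1-e^{-qt})\le\|F_t\|_\infty\,qt$, hence after multiplication by $r_q(0)$ it vanishes as $q\to0+$ by $qr_q(0)\to0$ (cf.\ (\ref{b8})). On $\{\bm e_q>t\}$ one has automatically $\{\bm e_q<T\}\subseteq\{T>t\}$, so using the lack-of-memory property of $\bm e_q$ together with the Markov property at time $t$ (on $\{T>t\}$ one has $T=t+T\circ\theta_t$) I would obtain
\[
\P_x\!\left[F_t\,1_{\{\bm e_q>t\}}1_{\{\bm e_q<T\}}\right]=e^{-qt}\,\P_x\!\left[F_t\,1_{\{T>t\}}\,\P_{X_t}(T>\bm e_q)\right].
\]
Multiplying by $r_q(0)$: the scalar $e^{-qt}\to1$, and $r_q(0)\,\P_{X_t}(T>\bm e_q)\to\varphi_e^T(X_t)$ in $L^1(\P_x)$ by hypothesis; since $F_t1_{\{T>t\}}$ is bounded this yields convergence of the expectations, which is the identity.

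Granting the identity, the martingale claim follows formally: for $s\ge0$ a bounded $\F_t$-measurable $F_t$ is also $\F_{t+s}$-measurable, so applying the identity at levels $t$ and $t+s$ to the same $F_t$ forces $\P_x[F_t\,\varphi_e^T(X_t)1_{\{T>t\}}]=\P_x[F_t\,\varphi_e^T(X_{t+s})1_{\{T>t+s\}}]$, i.e.\ $\bigl(\varphi_e^T(X_t)1_{\{T>t\}}\bigr)_{t\ge0}$ is an $((\F_t),\P_x)$-martingale; it is non-negative since $\varphi_e^T\ge0$, adapted since $T$ is a stopping time and $\varphi_e^T$ is Borel, and integrable since the $L^1(\P_x)$-convergence in the hypothesis forces $\varphi_e^T(X_t)\in L^1(\P_x)$. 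Taking $F_t\equiv1$ (and invoking the hypothesis at $t=0$, where $X_0=x$) gives $r_q(0)\,\P_x(\bm e_q<T)\to\varphi_e^T(x)$; when $\varphi_e^T(x)>0$ this limit is positive, so $\P_x(\bm e_q<T)>0$ for all small $q$, and
\[
\P_x[F_t\mid\bm e_q<T]=\frac{r_q(0)\,\P_x[F_t1_{\{\bm e_q<T\}}]}{r_q(0)\,\P_x(\bm e_q<T)}\ \longrightarrow\ \frac{\P_x[F_t\,\varphi_e^T(X_t)1_{\{T>t\}}]}{\varphi_e^T(x)},
\]
as required.

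The step I expect to be the main obstacle is the identity $\P_x[F_t1_{\{\bm e_q>t\}}1_{\{\bm e_q<T\}}]=e^{-qt}\P_x[F_t1_{\{T>t\}}\P_{X_t}(T>\bm e_q)]$: this is where the memorylessness of $\bm e_q$, the Markov property, and the decomposition $T=t+T\circ\theta_t$ on $\{T>t\}$ must be combined carefully, and where the standing hypothesis enters; everything else is bookkeeping around $qr_q(0)\to0$ and the assumed $L^1$-convergence. In all applications below $T$ is a first hitting time, so the shift identity $T=t+T\circ\theta_t$ on $\{T>t\}$ is available; for a fully general stopping time this point would require separate justification.
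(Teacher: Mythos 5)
Your proposal is correct and follows essentially the same route as the paper: the same decomposition on $\{\bm e_q\le t\}$ versus $\{\bm e_q>t\}$, the bound $qr_q(0)t\to0$ for the first piece, and lack of memory plus the Markov property for the second, with the paper merely phrasing this via the conditional-expectation martingales $M_t^q$ and $N_t^q$ and quoting that $L^1$-limits of non-negative martingales are martingales where you instead read off the martingale property from the level-independence of the limit. Your closing caveat about $T=t+T\circ\theta_t$ on $\{T>t\}$ is a point the paper leaves implicit, and is indeed satisfied in all of its applications since $T$ is always a hitting time.
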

\begin{proof}
We define 
\begin{align}
N_{t}^{q}:&= r_q(0)\mathbb{P}_x(T>\bm{e}_q>t|\F_t),\\
M_{t}^{q}:&=r_q(0)\mathbb{P}_x\left(\bm{e}_q<T|\F_t\right),
\end{align}
for $q>0.$ By the lack of memory property of the exponential distribution and by the Markov property, we have
\begin{align*}
N_{t}^q&=r_q(0)1_{\{T>t\}}\P_x^{\F_t}(T>\bm{e}_q|\ \bm{e}_q>t)\P_x(\bm{e}_q>t)\\
&=r_q(0)1_{\{T>t\}}\P_{X_t}(T>\bm{e}_q)e^{-qt},
\stepcounter{equation}\tag{\theequation}
\end{align*}
where $\P_x^{\F_t}$ means the conditional probability with respect to $\F_t$. Therefore, we obtain by the hypothesis,
\begin{align}
\lim_{q\to 0+}N_{t}^{q}=\varphi_e^T(X_t)1_{\{T>t\}}\ \mathrm{a.s.\ and\ in}\ L^1(\mathbb{P}_x).
\end{align}
By (\ref{b8}), we have 
\begin{align*}
M_{t}^q-N_{t}^q&=r_q(0)\P_x(T>\bm{e}_q,\ t\ge \bm{e}_q|\F_t)\\
&=qr_q(0)\int_0^t 1_{\{T>s\}}e^{-qs}ds\\
&\le qr_q(0) t\to 0,
\stepcounter{equation}\tag{\theequation}
\end{align*}
as $q\to 0+.$ Therefore, we have
\begin{align}
\lim_{q\to 0+}M_{t}^{q}=\varphi_e^T(X_t)1_{\{T>t\}}\ \mathrm{a.s.\ and\ in}\ L^1(\mathbb{P}_x).
\end{align}
Since $(M_t^q)_{t\ge 0}$ is a non-negative martingale, its $L^1$-limit $(\varphi_e^T(X_t)1_{\{T>t\}})$ is also a non-negative $((\F_t),\P_x)$-martingal (see, e.g., Proposition 1.3 of \cite{CW}).

Finally, we obtain by the $L^1$-convergence and the martingale property,
\begin{align*}
\P_x[F_t|\ \bm{e}_q<T]&=\frac{\P_x[F_t,\ \bm{e}_q<T]}{\P_x(\bm{e}_q<T)}=\frac{\P_x[F_t\cdot M_t^q]}{\P_x[M_t^q]}\\
&\to \frac{\P_x[F_t\cdot  \varphi_e^T(X_t)1_{\{T>t\}}]}{\P_x[\varphi_e^T(X_t)1_{\{T>t\}}]}=\mathbb{P}_x\left[F_t \cdot\frac{\varphi_e^T(X_t)}{\varphi_e^T(x)}1_{\{T>t\}}\right],
\stepcounter{equation}\tag{\theequation}
\end{align*}
as $q\to 0+.$
\end{proof}

The same general theorem is given below for clocks other than exponential clock, but note that there are a few more assumptions.

\begin{thm}
\label{t3}
Let $T$ be a stopping time. We assume that there exists the non-trivial and non-negative function $\varphi_o^T$ which satisfies the following:
\begin{align}
\lim_{c\to \pm \infty}h^B(c)\P_{X_t}(T>T_c)=\varphi_o^T(X_t)\ a.s.\ and\ in\ L^1(\P_x),
\end{align}
where $h^B(c):=\P_0[L_{T_c}^0]=h(c)+h(-c)$ (see, Lemma 3.5 of \cite{TY}). Moreover, we also assume that $\frac{\varphi_e^T(x)}{x}$ converges as $|x|\to \infty.$ Then, $(\varphi_o^T(X_t)1_{\{T>t\}})_{t\ge 0}$ is a non-negative $((\F_t),\P_x)$-martingale, and if $\varphi_o^T(x)>0$, then it holds that
\begin{align}
\lim_{c\to \pm \infty}\mathbb{P}_x[F_t|\ T_c<T]=\mathbb{P}_x\left[F_t \cdot\frac{\varphi_o^T(X_t)}{\varphi_o^T(x)}1_{\{T>t\}}\right]
\end{align}
for all bounded $\F_t$-measurable functionals $F_t$.
\end{thm}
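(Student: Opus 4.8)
The plan is to imitate, step for step, the proof of Theorem~\ref{t2}, replacing the exponential clock $\bm{e}_q$ by the hitting time $T_c$ and the normalisation $r_q(0)$ by $h^B(c)$. I would set
\[
M_t^c:=h^B(c)\,\P_x\!\left(T_c<T\mid\F_t\right)\qquad(c\in\R\setminus\{0\}),
\]
which for each fixed $c$ is a non-negative, uniformly integrable $((\F_t),\P_x)$-martingale, closed by $h^B(c)\,1_{\{T_c<T\}}$ (recall $h^B(c)=\P_0[L_{T_c}^0]>0$ since $0$ is regular for $\{0\}$). Everything then reduces to proving the single convergence
\[
M_t^c\ \longrightarrow\ \varphi_o^T(X_t)\,1_{\{T>t\}}\qquad\text{a.s.\ and in }L^1(\P_x),\ \text{as }c\to\pm\infty .
\]
Granting this, the $L^1$-limit of the non-negative martingales $(M_t^c)_{t\ge0}$ is a non-negative $((\F_t),\P_x)$-martingale (Proposition~1.3 of \cite{CW}), and, exactly as at the end of the proof of Theorem~\ref{t2},
\[
\P_x[F_t\mid T_c<T]=\frac{\P_x[F_t\,M_t^c]}{\P_x[M_t^c]}\ \longrightarrow\ \frac{\P_x\!\left[F_t\,\varphi_o^T(X_t)1_{\{T>t\}}\right]}{\P_x\!\left[\varphi_o^T(X_t)1_{\{T>t\}}\right]}=\P_x\!\left[F_t\cdot\frac{\varphi_o^T(X_t)}{\varphi_o^T(x)}\,1_{\{T>t\}}\right],
\]
since the denominator $\P_x[M_t^c]=\P_x[M_0^c]=h^B(c)\,\P_x(T_c<T)$ tends to $\varphi_o^T(x)$ by the hypothesis applied at $t=0$, and $\P_x[\varphi_o^T(X_t)1_{\{T>t\}}]=\varphi_o^T(x)$ by the martingale property.

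To prove the convergence of $M_t^c$, I would apply the Markov property at the fixed time $t$ — legitimate because in every application $T=T_A$ is a hitting time — and, splitting according to whether $T\wedge T_c\le t$ or not, obtain the decomposition $M_t^c=N_t^c+R_t^c$ with
\[
N_t^c:=h^B(c)\,1_{\{T\wedge T_c>t\}}\,\P_{X_t}(T_c<T),\qquad R_t^c:=h^B(c)\,1_{\{T_c\le t,\ T_c<T\}}.
\]
Since $X$ has \cadlag\ paths, it is bounded on $[0,t]$, so $T_c\to\infty$ $\P_x$-a.s.\ as $c\to\pm\infty$; hence $1_{\{T\wedge T_c>t\}}\to1_{\{T>t\}}$ a.s., and the a.s.\ part of the hypothesis gives $N_t^c\to\varphi_o^T(X_t)1_{\{T>t\}}$ a.s.\ and $R_t^c\to0$ a.s. For the $L^1$-convergence of $N_t^c$ one has $0\le N_t^c\le h^B(c)\,\P_{X_t}(T>T_c)$, and the right-hand side converges to $\varphi_o^T(X_t)$ both a.s.\ and in $L^1(\P_x)$ by hypothesis, so the generalised dominated convergence theorem yields $N_t^c\to\varphi_o^T(X_t)1_{\{T>t\}}$ in $L^1(\P_x)$. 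Thus the whole problem is concentrated in showing $R_t^c\to0$ in $L^1(\P_x)$, i.e.
\[
\lim_{c\to\pm\infty}h^B(c)\,\P_x\!\left(T_c\le t,\ T_c<T\right)=0,
\]
for which it suffices to show $h^B(c)\,\P_x(T_c\le t)\to0$.

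This last estimate is, I expect, the only genuine obstacle; note that it is purely about the process and does not involve $T$, so it can be treated once and for all as in the $T=\{0\}$ case. By Markov's inequality and (\ref{b2}), $\P_x(T_c\le t)\le e^{qt}\P_x[e^{-qT_c}]=e^{qt}r_q(c-x)/r_q(0)$ for every $q>0$, so it is enough to prove $h^B(c)\,r_q(c)\to0$ as $c\to\pm\infty$ for each fixed $q>0$. Here $h^B(c)=h(c)+h(-c)=O(|c|)$ by Proposition~\ref{0-2.1}(3); and assumption \textbf{(A)} forces $1/(q+\Psi)\in L^1(\R)$, whence $r_q$, being its inverse Fourier transform, lies in $C_0(\R)$ by Riemann--Lebesgue. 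The sharper rate $|c|\,r_q(c)\to0$ I would extract from the Wiener--Hopf factorisation: on $[0,\infty)$ the density $r_q$ is bounded by a constant multiple of the $q$-potential density of the ascending ladder-height subordinator, which is non-increasing and integrable, and any non-increasing integrable density $u$ satisfies $x\,u(x)\to0$; the descending ladder handles $c\to-\infty$. With this, $M_t^c\to\varphi_o^T(X_t)1_{\{T>t\}}$ in $L^1(\P_x)$ and the theorem follows as above. The additional hypothesis that $\varphi_o^T(x)/x$ converge as $|x|\to\infty$ plays no role in the argument beyond ensuring that $\varphi_o^T$ grows at most linearly, so that the one-sided limits $c\to\pm\infty$ are finite and the renormalisation by $h^B$ (which is of order $|c|$) is the natural one; it is this growth property that makes the hypotheses verifiable in the concrete cases of Sections~\ref{S3}--\ref{S4}.
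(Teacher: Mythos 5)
Your reduction is sound and matches the paper's strategy up to the decomposition: both you and the paper set $M_t^c=h^B(c)\P_x(T_c<T\mid\F_t)$, identify the term $N_t^c=h^B(c)1_{\{T>t\}}\P_{X_t}(T>T_c)$ whose a.s.\ and $L^1$ convergence is the hypothesis, and reduce everything to showing $h^B(c)\,\P_x(T>T_c,\ T_c\le t)\to 0$ in $L^1(\P_x)$. The gap is in how you handle this last term. You discard the restriction $\{T>T_c\}$ and try to prove the stronger statement $h^B(c)\P_x(T_c\le t)\to 0$, which via (\ref{b2}) you reduce to $|c|\,r_q(c)\to 0$; your proof of that rests on the claim that the $q$-potential density of the ascending ladder-height subordinator exists and is non-increasing. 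That monotonicity is not true for a general subordinator (it characterizes special subordinators; a compound Poisson ladder height with, say, jumps concentrated near $1$ produces a potential density with bumps), and without it the convolution bound $q r_q(x)\le f_{S_{\bm{e}_q}}(x)$ for $x>0$ also fails, since you need $f_{S_{\bm{e}_q}}(x-y)\le f_{S_{\bm{e}_q}}(x)$ for $y\le 0$. So the one step you yourself identify as ``the only genuine obstacle'' is exactly the step that is not established.

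The paper avoids this obstacle entirely by \emph{keeping} the event $\{T>T_c\}$: since $X_{T_c}=c$ on $\{T_c<\infty\}$, it writes
$h^B(c)\P_x(T>T_c,\ T_c\le t)=\tfrac{h^B(c)}{c}\cdot\tfrac{c}{\varphi_e^T(c)}\,\P_x\bigl[\varphi_e^T(X_{T_c})1_{\{T>T_c\}};\,T_c\le t\bigr]$,
applies the optional sampling theorem to the martingale $(\varphi_e^T(X_s)1_{\{T>s\}})_s$ furnished by Theorem \ref{t2} to replace $T_c$ by $t$ inside the expectation, and then lets $1_{\{T_c\le t\}}\to 0$ by dominated convergence; the prefactor converges by (\ref{0-27}) together with the assumed convergence of $\varphi_e^T(x)/x$. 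This is why the hypothesis in the statement concerns $\varphi_e^T$ (the exponential-clock harmonic function), not the growth of $\varphi_o^T$: contrary to your closing remark that it ``plays no role,'' it is exactly the ingredient that makes the $L^1$ estimate go through without any pointwise decay rate for $r_q$. To repair your argument you would need either to adopt this optional-sampling route or to supply an honest proof that $h^B(c)\P_x(T_c\le t)\to 0$ under the standing assumptions, which does not follow from Riemann--Lebesgue plus integrability alone.
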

\begin{proof}
We define 
\begin{align}
N_{t}^{c}:&= h^B(c)\mathbb{P}_x(T>T_c>t|\F_t),\\
M_{t}^{c}:&=h^B(c)\mathbb{P}_x\left(T_c<T|\F_t\right),
\end{align}
for $c\in \R.$ By the Markov property, we have
\begin{align*}
N_{t}^c=h^B(c)1_{\{T>t\}}\P_{X_t}(T>T_c).
\stepcounter{equation}\tag{\theequation}
\end{align*}
Therefore, we obtain by the hypothesis,
\begin{align}
\lim_{c\to \pm \infty}N_{t}^{c}=\varphi_o^T(X_t)1_{\{T>t\}}\ \mathrm{a.s.\ and\ in}\ L^1(\mathbb{P}_x).
\end{align}
We have 
\begin{align*}
M_{t}^c-N_{t}^c&=h^B(c)\P_x(T>T_c,\ t\ge T_c|\F_t)\\
&=h^B(c)\P_x(T>T_c|\F_t)1_{\{t\ge T_c\}}\to 0\ a.s.,
\stepcounter{equation}\tag{\theequation}
\end{align*}
as $c\to \pm \infty.$ Since $(\varphi_e^T(X_t)1_{\{T>t\}})$ is a martingale, we have by the optional sampling theorem, by the hypothesis, and by (\ref{0-27}),
\begin{align*}
\P_x\left[|M_t^c-N_t^c|\right]&=h^B(c)\P_x(T>T_c,\ t\ge T_c)\\
&=\frac{h^B(c)}{c}\frac{c}{\varphi_e^T(c)}\P_x[\varphi_e^T(X_{T_c})1_{\{T>T_c\}},\ t\ge T_c]\\
&= \frac{h^B(c)}{c}\frac{c}{\varphi_e^T(c)}\P_x[\varphi_e^T(X_t)1_{\{T>t\}},\ t\ge T_c]\to 0,
\stepcounter{equation}\tag{\theequation}
\end{align*}
as $c\to \pm \infty$. Therefore, we have
\begin{align}
\lim_{c\to \pm \infty}M_{t}^{c}=\varphi_o^T(X_t)1_{\{T>t\}}\ \mathrm{a.s.\ and\ in}\ L^1(\mathbb{P}_x).
\end{align}
The rest of the proof is the same as in Theorem \ref{t2}.
\end{proof}

Next, we consider the case two-point hitting time clock.

\begin{thm}
\label{t4}
Let $T$ be a stopping time. We assume that there exists the non-trivial and non-negative function $\varphi_t^T$ which satisfies the following:
\begin{align}
\lim_{(c,d)\stackrel{(\gamma)}{\to} \infty}h^C(c,-d)\P_{X_t}(T>T_c\wedge T_{-d})=\varphi_t^T(X_t)\ a.s.\ and\ in\ L^1(\P_x),
\end{align}
where $h^C(c,-d):=\P_0[L_{T_c\wedge T_{-d}}^0]$ which can be expressed only by $h$ (see, Lemma 6.1 of \cite{TY}). Moreover, we also assume that $\frac{\varphi_e^T(x)}{x}$ converges as $|x|\to \infty.$ Then, $(\varphi_t^T(X_t)1_{\{T>t\}})_{t\ge 0}$ is a non-negative $((\F_t),\P_x)$-martingale, and if $\varphi_t^T(x)>0$, then it holds that
\begin{align}
\lim_{(c,d)\stackrel{(\gamma)}{\to} \infty}\mathbb{P}_x[F_t|\ T_c\wedge T_{-d}<T]=\mathbb{P}_x\left[F_t \cdot\frac{\varphi_t^T(X_t)}{\varphi_t^T(x)}1_{\{T>t\}}\right]
\end{align}
for all bounded $\F_t$-measurable functionals $F_t$.
\end{thm}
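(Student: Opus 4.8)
The plan is to run the argument in the proof of Theorem \ref{t3}, with the one-point boundary $\{c\}$ replaced by the two-point set $\{c,-d\}$. Write $\sigma:=T_c\wedge T_{-d}=T_{\{c,-d\}}$ and, for $c,d>0$, set
\begin{align*}
N_t^{c,d}:=h^C(c,-d)\,\P_x(T>\sigma>t\mid\F_t),\qquad M_t^{c,d}:=h^C(c,-d)\,\P_x(\sigma<T\mid\F_t).
\end{align*}
Each $(M_t^{c,d})_{t\ge0}$ is a bounded non-negative $((\F_t),\P_x)$-martingale (it equals $h^C(c,-d)$ times the bounded martingale $t\mapsto\P_x(\sigma<T\mid\F_t)$); by the Markov property $N_t^{c,d}=h^C(c,-d)1_{\{T>t\}}\P_{X_t}(T>\sigma)$, so the hypothesis gives at once $N_t^{c,d}\to\varphi_t^T(X_t)1_{\{T>t\}}$ a.s.\ and in $L^1(\P_x)$ as $(c,d)\stackrel{(\gamma)}{\to}\infty$. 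Exactly as in the proof of Theorem \ref{t2}, once I also show $M_t^{c,d}\to\varphi_t^T(X_t)1_{\{T>t\}}$ a.s.\ and in $L^1$, the limit will be a non-negative martingale (Proposition 1.3 of \cite{CW}) and the conclusion will follow from $\P_x[F_t\mid\sigma<T]=\P_x[F_tM_t^{c,d}]/\P_x[M_t^{c,d}]$. So the whole point is the convergence $M_t^{c,d}-N_t^{c,d}\to0$.

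We have $M_t^{c,d}-N_t^{c,d}=h^C(c,-d)\P_x(T>\sigma,\ t\ge\sigma\mid\F_t)$. The a.s.\ convergence to $0$ is immediate: the paths being \cadlag, $\sup_{0\le s\le t}|X_s|<\infty$ a.s., hence $\sigma\to\infty$ a.s.\ as $c,d\to\infty$ and $1_{\{t\ge\sigma\}}\to0$ a.s. For the $L^1$ convergence I would proceed as in Theorem \ref{t3}: by Theorem \ref{t2} the process $(\varphi_e^T(X_s)1_{\{T>s\}})_{s\ge0}$ is an $((\F_s),\P_x)$-martingale, so optional stopping at the bounded time $\sigma\wedge t$, together with $X_\sigma\in\{c,-d\}$ on $\{\sigma<\infty\}$ (the set is closed), gives
\begin{align*}
\P_x\bigl[\varphi_e^T(X_t)1_{\{T>t\}},\ t\ge\sigma\bigr]&=\P_x\bigl[\varphi_e^T(X_\sigma)1_{\{T>\sigma\}},\ t\ge\sigma\bigr]\\
&\ge\bigl(\varphi_e^T(c)\wedge\varphi_e^T(-d)\bigr)\,\P_x(T>\sigma,\ t\ge\sigma),
\end{align*}
so that
\begin{align*}
\P_x\bigl[\,|M_t^{c,d}-N_t^{c,d}|\,\bigr]&=h^C(c,-d)\,\P_x(T>\sigma,\ t\ge\sigma)\\
&\le\frac{h^C(c,-d)}{\varphi_e^T(c)\wedge\varphi_e^T(-d)}\,\P_x\bigl[\varphi_e^T(X_t)1_{\{T>t\}},\ t\ge\sigma\bigr].
\end{align*}
The right-hand expectation tends to $0$ by dominated convergence, since $\varphi_e^T(X_t)1_{\{T>t\}}\in L^1(\P_x)$ and $1_{\{t\ge\sigma\}}\to0$ a.s.

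The main obstacle is the remaining prefactor $h^C(c,-d)/(\varphi_e^T(c)\wedge\varphi_e^T(-d))$: unlike in Theorem \ref{t3}, where the boundary is a single deterministic point and the corresponding step is an exact identity, here $X_\sigma$ takes two values and one must control $\varphi_e^T$ at $c$ and at $-d$ simultaneously against the joint quantity $h^C(c,-d)=\P_0[L_\sigma^0]$. Since $\sigma\le T_c$, $\sigma\le T_{-d}$ and $s\mapsto L_s^0$ is non-decreasing, Lemma 3.5 of \cite{TY} gives $h^C(c,-d)\le\P_0[L_{T_c}^0]\wedge\P_0[L_{T_{-d}}^0]=h^B(c)\wedge h^B(d)$ (alternatively one can use the explicit expression for $h^C$ in Lemma 6.1 of \cite{TY}), so
\begin{align*}
\frac{h^C(c,-d)}{\varphi_e^T(c)\wedge\varphi_e^T(-d)}\le\frac{h^B(c)}{\varphi_e^T(c)}\vee\frac{h^B(d)}{\varphi_e^T(-d)};
\end{align*}
by (\ref{0-27}) one has $h^B(c)/c\to2/m^2$, and the standing hypothesis that $\varphi_e^T(x)/x$ converges as $|x|\to\infty$ then keeps both ratios on the right bounded (for $m^2<\infty$ the limits of $\varphi_e^T(x)/x$ at $\pm\infty$ are nonzero, consistently with $\varphi_e^T\ge0$; the case $m^2=\infty$, in which $h^{(\gamma)}$ and hence $\varphi_t^T$ do not depend on $\gamma$, is handled by the coarser bound $h^C(c,-d)\P_x(\sigma\le t)\le h^B(c)\P_x(T_c\le t)+h^B(d)\P_x(T_{-d}\le t)$, each term of which vanishes as $c,d\to\infty$). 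This yields $M_t^{c,d}\to\varphi_t^T(X_t)1_{\{T>t\}}$ a.s.\ and in $L^1(\P_x)$, and the proof is completed exactly as in Theorem \ref{t2}.
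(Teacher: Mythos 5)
Your proposal is correct and follows the same route the paper intends: the paper's proof of Theorem \ref{t4} is simply the remark that it is identical to that of Theorem \ref{t3}, and your argument is exactly that proof with $T_c$ replaced by $T_c\wedge T_{-d}$, the one genuinely new ingredient being the bound $\varphi_e^T(X_{T_c\wedge T_{-d}})\ge \varphi_e^T(c)\wedge\varphi_e^T(-d)$ combined with $h^C(c,-d)\le h^B(c)\wedge h^B(d)$ to control the prefactor. The remaining soft spots you yourself flag (that convergence of $\varphi_e^T(x)/x$ does not by itself force a nonzero limit, and the unproved claim $h^B(c)\P_x(T_c\le t)\to 0$ in the $m^2=\infty$ fallback) are already present in the paper's own proof of Theorem \ref{t3}, which likewise needs $h^B(c)/\varphi_e^T(c)$ bounded, so your reconstruction is no weaker than the original.
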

\begin{proof}
The proof is the same as in Theorem \ref{t3}, so we omit it.
\end{proof}

Finally, we consider the case inverse local time clock.

\begin{thm}
\label{t5}
Let $T$ be a stopping time. We assume that there exists the non-trivial and non-negative function $\varphi_i^T$ which satisfies the following:
\begin{align}
\lim_{c\to \pm \infty}h^B(c)\P_{X_t}(T>\eta_u^c)=\varphi_i^T(X_t)\ a.s.\ and\ in\ L^1(\P_x).
\end{align}
Moreover, we also assume that $\frac{\varphi_e^T(x)}{x}$ converges as $|x|\to \infty.$ Then, $(\varphi_i^T(X_t)1_{\{T>t\}})_{t\ge 0}$ is a non-negative $((\F_t),\P_x)$-martingale, and if $\varphi_i^T(x)>0$, then it holds that
\begin{align}
\lim_{c\to \pm \infty}\mathbb{P}_x[F_t|\ \eta_u^c<T]=\mathbb{P}_x\left[F_t \cdot\frac{\varphi_i^T(X_t)}{\varphi_i^T(x)}1_{\{T>t\}}\right]
\end{align}
for all bounded $\F_t$-measurable functionals $F_t$.
\end{thm}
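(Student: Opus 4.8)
The plan is to follow the template already used for Theorems~\ref{t2}, \ref{t3} and \ref{t4}. Fix $t\ge 0$ and, for $c\in\R$, put
\[
N_t^c:=h^B(c)\,\P_x\big(T>\eta_u^c>t\mid\F_t\big),\qquad M_t^c:=h^B(c)\,\P_x\big(\eta_u^c<T\mid\F_t\big).
\]
For each $c$, $(M_t^c)_{t\ge 0}$ is a non-negative uniformly integrable $((\F_t),\P_x)$-martingale. As in Theorem~\ref{t2}, everything reduces to showing that $M_t^c\to\varphi_i^T(X_t)1_{\{T>t\}}$ as $c\to\pm\infty$, both a.s.\ and in $L^1(\P_x)$: granting this, the limit is a non-negative martingale (an $L^1$-limit of non-negative martingales), and $\P_x[F_t\mid\eta_u^c<T]=\P_x[F_t M_t^c]/\P_x[M_t^c]\to\P_x[F_t\,\varphi_i^T(X_t)/\varphi_i^T(x)\,1_{\{T>t\}}]$. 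I would prove the convergence of $M_t^c$ in the two usual steps: $N_t^c\to\varphi_i^T(X_t)1_{\{T>t\}}$, and then $M_t^c-N_t^c\to 0$.

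For the first step, the structural point is that $\eta_u^c\ge T_c$, so $\eta_u^c\to\infty$ as $c\to\pm\infty$; moreover on $\{T_c>t\}$ the local time at $c$ is identically zero on $[0,t]$, so there $\eta_u^c=t+\eta_u^c\circ\theta_t$ and the Markov property gives $N_t^c=h^B(c)1_{\{T>t\}}\P_{X_t}(T>\eta_u^c)$ on $\{T_c>t\}$; since for each $\omega$ one has $\omega\in\{T_c>t\}$ once $|c|$ is large enough, the a.s.\ convergence $N_t^c\to\varphi_i^T(X_t)1_{\{T>t\}}$ is immediate from the a.s.\ part of the hypothesis. On $\{T_c\le t\}$ the accumulated local time $L_t^c$ intervenes through the residual level $u-L_t^c$, but both $N_t^c$ and $h^B(c)1_{\{T>t\}}\P_{X_t}(T>\eta_u^c)$ lie in $[0,h^B(c)]$ there, whence
\[
\big\|N_t^c-h^B(c)1_{\{T>t\}}\P_{X_t}(T>\eta_u^c)\big\|_{L^1(\P_x)}\le h^B(c)\,\P_x(T_c\le t).
\]
Now $1_{\{T_c\le t\}}\le e^{\,1-T_c/t}$ together with \eqref{b2} give $\P_x(T_c\le t)\le e\,r_{1/t}(c-x)/r_{1/t}(0)$, so using that $h^B=h(\cdot)+h(-\cdot)$ grows at most linearly (Proposition~\ref{0-2.1}(3) and continuity of $h$) while $r_q$ vanishes at infinity (it is the bounded continuous, hence $C_0$, Fourier transform of $1/(q+\Psi)\in L^1(\R)$ under \textbf{(A)}), one gets $h^B(c)\,\P_x(T_c\le t)\to 0$. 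Combined with the $L^1(\P_x)$-convergence of $h^B(c)\P_{X_t}(T>\eta_u^c)$ to $\varphi_i^T(X_t)$ assumed in the statement, this yields $N_t^c\to\varphi_i^T(X_t)1_{\{T>t\}}$ also in $L^1(\P_x)$. (Alternatively, since $\eta_u^c\ge T_c$ forces $N_t^c\le h^B(c)1_{\{T>t\}}\P_{X_t}(T>T_c)$, one could instead invoke uniform integrability of the right-hand family whenever the one-point-hitting-clock limit is available.) This discrepancy estimate — showing that the local-time contamination carried across time $t$ is asymptotically negligible, a point with no analogue for the one-point clock $T_c$ — is where I expect the genuine work; everything else is bookkeeping parallel to Theorem~\ref{t3}.

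For the second step, since $\{\eta_u^c\le t\}=\{L_t^c\ge u\}\in\F_t$ and, on that event, $\{T>\eta_u^c\}\in\F_t$ as well, one has $M_t^c-N_t^c=h^B(c)1_{\{\eta_u^c\le t\}}1_{\{T>\eta_u^c\}}\ge 0$, which tends to $0$ a.s.\ because $1_{\{\eta_u^c\le t\}}\le 1_{\{T_c\le t\}}\to 0$. For the $L^1$-convergence, following Theorem~\ref{t3}, let $Y_s:=\varphi_e^T(X_s)1_{\{T>s\}}$, a non-negative martingale by Theorem~\ref{t2}; using the identity $X_{\eta_u^c}=c$ and optional stopping of $Y$ at the bounded time $\eta_u^c\wedge t$ over the set $\{\eta_u^c\le t\}\in\F_{\eta_u^c\wedge t}$, one obtains $\P_x[M_t^c-N_t^c]=\tfrac{h^B(c)}{\varphi_e^T(c)}\,\P_x[\varphi_e^T(X_t)1_{\{T>t\}}1_{\{\eta_u^c\le t\}}]$. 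Here $h^B(c)/\varphi_e^T(c)$ stays bounded as $c\to\pm\infty$ (by the assumed convergence of $\varphi_e^T(x)/x$ and Proposition~\ref{0-2.1}(3)), and the remaining expectation tends to $0$ by dominated convergence, since $1_{\{\eta_u^c\le t\}}\to 0$ a.s.\ and $Y_t\in L^1(\P_x)$. Hence $M_t^c-N_t^c\to 0$ a.s.\ and in $L^1(\P_x)$, so $M_t^c\to\varphi_i^T(X_t)1_{\{T>t\}}$ in both senses, and the proof concludes exactly as in Theorem~\ref{t2}.
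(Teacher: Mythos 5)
Your proposal follows the paper's own proof of Theorem~\ref{t5} almost step for step: the same decomposition into $N_t^c$ and $M_t^c$, the same identification of $N_t^c$ through the Markov property and the hypothesis, and the same optional-sampling computation
$\P_x[M_t^c-N_t^c]=\frac{h^B(c)}{\varphi_e^T(c)}\,\P_x\bigl[\varphi_e^T(X_t)1_{\{T>t\}}1_{\{\eta_u^c\le t\}}\bigr]\to 0$
with $h^B(c)/\varphi_e^T(c)$ bounded via (\ref{0-27}) and the assumed convergence of $\varphi_e^T(x)/x$. The only place you depart from the paper is in the first step, where you correctly note that the identity $N_t^c=h^B(c)1_{\{T>t\}}\P_{X_t}(T>\eta_u^c)$ is exact only on $\{T_c>t\}$, since on $\{T_c\le t\}$ the Markov property produces the residual level $u-L_t^c$ rather than $u$; the paper asserts the identity without comment, so your extra care here is legitimate.

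The justification you give for that extra step, however, has a gap: to conclude $h^B(c)\,\P_x(T_c\le t)\to 0$ you combine the at-most-linear growth of $h^B$ with the fact that $r_{1/t}\in C_0(\R)$, but this only yields a bound of the form $O(|c|)\cdot o(1)$, which need not vanish --- Riemann--Lebesgue gives no rate of decay for $r_{1/t}(c-x)$, and condition \textbf{(A)} alone does not supply one. The clean repair is the one your parenthetical already hints at: since $\eta_v^c\ge T_c$ for every $v\ge 0$, the discrepancy on $\{T_c\le t\}$ is dominated by $2\,h^B(c)1_{\{T>t\}}\P_{X_t}(T>T_c)1_{\{T_c\le t\}}$, and one can then run exactly the optional-sampling/dominated-convergence argument you use for $M_t^c-N_t^c$ (writing $h^B(c)\P_{X_t}(T>T_c)=\frac{h^B(c)}{\varphi_e^T(c)}\P_{X_t}[\varphi_e^T(X_{T_c})1_{\{T>T_c\}}]$ and closing with the martingale $\varphi_e^T(X_s)1_{\{T>s\}}$), rather than appealing to decay of the resolvent density. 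With that substitution your argument is complete and coincides with the paper's.
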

\begin{proof}
Note that for each $u\ge 0,\ c\in \R$, $\eta_u^c$ is a stopping time (see, e.g., Proposition IV.7 of \cite{Ber}). We define 
\begin{align}
N_{t}^{c}:&= h^B(c)\mathbb{P}_x(T>\eta_u^c>t|\F_t),\\
M_{t}^{c}:&=h^B(c)\mathbb{P}_x\left(\eta_u^c<T|\F_t\right),
\end{align}
for $c\in \R.$ By the Markov property, we have
\begin{align*}
N_{t}^c&=h^B(c)1_{\{T>t\}}\P_{X_t}(T>\eta_u^c)\\
&=h^B(c)1_{\{T>t\}}\P_{X_t}(T>T_c)\P_c(T>\eta_u^c).
\stepcounter{equation}\tag{\theequation}
\end{align*}
Therefore, we obtain by the hypothesis,
\begin{align}
\lim_{c\to \pm \infty}N_{t}^{c}=\varphi_i^T(X_t)1_{\{T>t\}}\ \mathrm{a.s.\ and\ in}\ L^1(\mathbb{P}_x).
\end{align}
We have 
\begin{align*}
M_{t}^c-N_{t}^c&=h^B(c)\P_x(T>\eta_u^c,\ t\ge \eta_u^c|\F_t)\\
&=h^B(c)\P_x(T>\eta_u^c|\F_t)1_{\{t\ge \eta_u^c\}}\to 0\ a.s.,
\stepcounter{equation}\tag{\theequation}
\end{align*}
as $q\to 0+.$ Since $(\varphi_e^T(X_t)1_{\{T>t\}})$ is a martingale, we have by the optional sampling theorem, by the hypothesis, and by (\ref{0-27}),
\begin{align*}
\P_x\left[|M_t^c-N_t^c|\right]&=h^B(c)\P_x(T>\eta_u^c,\ t\ge \eta_u^c)\\
&=\frac{h^B(c)}{c}\frac{c}{\varphi_e^T(c)}\P_x[\varphi_e^T(X_{\eta_u^c})1_{\{T>\eta_u^c\}},\ t\ge \eta_u^c]\\
&= \frac{h^B(c)}{c}\frac{c}{\varphi_e^T(c)}\P_x[\varphi_e^T(X_t)1_{\{T>t\}},\ t\ge \eta_u^c]\to 0.
\stepcounter{equation}\tag{\theequation}
\end{align*}
Therefore, we have
\begin{align}
\lim_{c\to \pm \infty}M_{t}^{c}=\varphi_i^T(X_t)1_{\{T>t\}}\ \mathrm{a.s.\ and\ in}\ L^1(\mathbb{P}_x).
\end{align}
The rest of the proof is the same as in Theorem \ref{t3}, so we omit it.
\end{proof}


\section{Conditionings to avoid two points}
\label{S3}
\subsection{Conditioning theorems}
We define for distinct points $a,b\in \R$ and for a constant $-1\le \gamma\le 1,$
\begin{align*}
\label{c1}
\varphi^{(\gamma)}_{a,b}(x):&=h^{(\gamma)}(x-a)-\mathbb{P}_x(T_b<T_a)h^{(\gamma)}(b-a)\\
&=h^{(\gamma)}(x-b)-\mathbb{P}_x(T_a<T_b)h^{(\gamma)}(a-b).
\stepcounter{equation}\tag{\theequation}
\end{align*}

First, we prove of the a.s. convergence.

\begin{thm}
\label{3.1}
The following assertions hold:
\begin{align}
\label{3.1.1}
\text{(Ex)}&\ \lim_{q\to 0+}r_q(0)\P_x(T_a\wedge T_b>\bm{e}_q)=\varphi_{a,b}^{(0)}(x),\\
\text{(OH)}&\ \lim_{c\to \pm\infty}h^B(c)\P_x(T_a\wedge T_b>T_c)=\varphi_{a,b}^{(\pm 1)}(x),\\
\label{3.1.3}
\text{(TH)}&\ \lim_{(c,d)\stackrel{(\gamma)}{\to} \infty}h^C(c,-d)\P_x(T_a\wedge T_b>T_c\wedge T_{-d})=\varphi_{a,b}^{(\gamma)}(x),\\
\label{3.1.4}
\text{(IL)}&\ \lim_{c\to \pm\infty}h^B(c)\P_x(T_a\wedge T_b>\eta_u^c)=\varphi_{a,b}^{(\pm 1)}(x).
\end{align}
\end{thm}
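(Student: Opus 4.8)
\emph{Plan.} The plan is to reduce all four cases to one elementary set decomposition combined with the strong Markov property at $T_b$, and then to plug in the corresponding one-point limits, which are the $A=\{0\}$ results of Takeda--Yano \cite{TY}, together with Proposition \ref{0-2.1}. The decomposition is that for any clock $\tau$ one has the disjoint union $\{T_a>\tau\}=\{T_a\wedge T_b>\tau\}\sqcup\{T_b\le\tau<T_a\}$, and on the second event $T_b<T_a$. Since every point is regular for itself, $\{T_a=T_b\}=\emptyset$, hence $\{T_b<T_a\}\in\F_{T_b}$, and all ``no tie'' facts used below are trivial.

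\emph{The case (Ex).} Using $\P_x(T_b=\bm{e}_q)=0$, the decomposition together with the strong Markov property at $T_b$ (where $X_{T_b}=b$) and the lack of memory of $\bm{e}_q$ gives
\[
\P_x(T_b<\bm{e}_q<T_a)=\P_x\!\left[e^{-qT_b};\,T_b<T_a\right]\P_b(\bm{e}_q<T_a).
\]
By \eqref{b2}, $\P_b(\bm{e}_q<T_a)=1-r_q(a-b)/r_q(0)=h_q(b-a)/r_q(0)$ and $r_q(0)\P_x(T_a>\bm{e}_q)=r_q(0)-r_q(a-x)=h_q(x-a)$, whence
\[
r_q(0)\,\P_x(T_a\wedge T_b>\bm{e}_q)=h_q(x-a)-\P_x\!\left[e^{-qT_b};\,T_b<T_a\right]h_q(b-a).
\]
Letting $q\to0+$, Proposition \ref{0-2.1} gives $h_q\to h=h^{(0)}$, and monotone convergence gives $\P_x[e^{-qT_b};T_b<T_a]\to\P_x(T_b<T_a)$ (using $\{T_b<T_a\}\subset\{T_b<\infty\}$), so the right-hand side converges to $h^{(0)}(x-a)-\P_x(T_b<T_a)h^{(0)}(b-a)=\varphi_{a,b}^{(0)}(x)$; this is \eqref{3.1.1}.

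\emph{The cases (OH) and (TH).} Running the same argument with $\bm{e}_q$ replaced by $T_c$ (resp.\ $S:=T_c\wedge T_{-d}$) and the normalization $r_q(0)$ replaced by $h^B(c)$ (resp.\ $h^C(c,-d)$), where now the Markov step at $T_b$ uses $S\circ\theta_{T_b}=S-T_b$ on $\{T_b<S\}$, one obtains
\[
h^B(c)\,\P_x(T_a\wedge T_b>T_c)=h^B(c)\,\P_x(T_c<T_a)-\P_x(T_b<T_a\wedge T_c)\,h^B(c)\,\P_b(T_c<T_a)
\]
and the analogue for (TH). Then I invoke the one-point limits of \cite{TY} (or prove them directly as in the (Ex) case), namely $\lim_{c\to\pm\infty}h^B(c)\P_y(T_c<T_z)=h^{(\pm1)}(y-z)$ and $\lim_{(c,d)\stackrel{(\gamma)}{\to}\infty}h^C(c,-d)\P_y(T_c\wedge T_{-d}<T_z)=h^{(\gamma)}(y-z)$ (the $z=0$ case transported by translation, using $h^B(c)/h^B(c-a)\to1$, which follows from the subadditivity of $h$ and $h^B(c)\to\infty$). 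Since $T_c\to\infty$ (resp.\ $T_c\wedge T_{-d}\to\infty$) a.s., $\P_x(T_b<T_a\wedge T_c)\uparrow\P_x(T_b<T_a)$, so the limit equals $h^{(\gamma)}(x-a)-\P_x(T_b<T_a)h^{(\gamma)}(b-a)=\varphi_{a,b}^{(\gamma)}(x)$, giving the (OH) assertion and \eqref{3.1.3}.

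\emph{The case (IL).} Write $T:=T_a\wedge T_b$. Since $L^c_{T_c}=0$, the strong Markov property at $T_c$ factorizes $\P_x(T>\eta^c_u)=\P_x(T>T_c)\,\P_c(T>\eta^c_u)$, and by the (OH) case $h^B(c)\P_x(T>T_c)\to\varphi_{a,b}^{(\pm1)}(x)$, so it remains to show $\P_c(T>\eta^c_u)\to1$ as $c\to\pm\infty$. Under $\P_c$ the local time $L^c_{T_a}$ is exponentially distributed with mean $\P_c[L^c_{T_a}]=\P_0[L^0_{T_{a-c}}]=h(c-a)+h(a-c)$ (excursion theory together with Lemma 3.5 of \cite{TY}), and likewise for $L^c_{T_b}$; since $h(c-a)+h(a-c)\to\infty$ and $\{T\le\eta^c_u\}\subset\{L^c_{T_a}\le u\}\cup\{L^c_{T_b}\le u\}$, we get $\P_c(T\le\eta^c_u)\le(1-e^{-u/(h(c-a)+h(a-c))})+(1-e^{-u/(h(c-b)+h(b-c))})\to0$, which proves \eqref{3.1.4}. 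Finally, reading each limit with $x$ replaced by $X_t$ yields the a.s.\ convergences required in Theorems \ref{t2}--\ref{t5}, the $L^1(\P_x)$ part following from the domination $r_q(0)\P_y(T_a\wedge T_b>\bm{e}_q)\le h_q(y-a)$ and its analogues once one uses $h_q\uparrow h$. The part needing most care is not the (routine) event bookkeeping but (i) quoting the one-point limits of \cite{TY} in exactly the normalizations above, with the harmless ratio $h^B(c)/h^B(c-a)\to1$, and (ii) in case (IL), the exponential law of $L^c_{T_a}$ under $\P_c$ with the stated mean together with $h(c)+h(-c)\to\infty$ for recurrent $X$.
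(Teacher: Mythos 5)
Your proof is correct, but it takes a genuinely different route from the paper's. The paper simply quotes the two-point limits (3.25) and (4.16) of \cite{IY} for (Ex) and (OH); for (TH) it splits $\P_x(T_a\wedge T_b>T_c\wedge T_{-d})$ according to which of $c,-d$ is reached first, derives the identity (\ref{4.8}) by applying the strong Markov property at $T_c$ and $T_{-d}$, and then feeds in (4.17) and (5.11) of \cite{IY}; for (IL) it factorizes at $T_c$ and uses the exact exponential law of $L^0_{T_{a-c}\wedge T_{b-c}}$ with parameter $1/h^C(a-c,b-c)$. You instead peel off the point $b$ by one application of the strong Markov property at $T_b$, uniformly in the clock, which reduces every case to the one-point limits of \cite{TY} together with the translation ratios $h^B(c)/h^B(c-a)\to1$ (and its $h^C$ analogue), which you rightly flag as the point requiring care. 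This buys a self-contained derivation of (Ex) and (OH) modulo \cite{TY} rather than \cite{IY}, and for (TH) it bypasses the two-sided-exit bookkeeping entirely, at the price of needing the one-point limit $h^C(c,-d)\P_y(T_c\wedge T_{-d}<T_z)\to h^{(\gamma)}(y-z)$ in that exact normalization. Your (IL) argument is the same factorization as the paper's (\ref{4.10}); replacing the exact two-point exponential law by the union bound over $\{L^c_{T_a}\le u\}$ and $\{L^c_{T_b}\le u\}$ is a harmless weakening. One caveat: your closing sentence about $L^1(\P_x)$-convergence concerns Theorem \ref{3.2}, not Theorem \ref{3.1}, and the domination you sketch there is shaky --- $h_q$ need not increase to $h$ as $q\to0+$ for asymmetric processes; the paper instead uses $0\le h_q\le h^B$ and Theorem 15.2 of \cite{Tukada} for the $L^1$ statement.
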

\begin{proof}
The cases of the exponential clock and the one-point hitting time clock are already shown in (3.25) and (4.16) of \cite{IY}, respectively.

Next, we prove the case of the two-point hitting time clock. We have
\begin{align*}
\label{4.6}
&h^C(c,-d)\P_x(T_a\wedge T_b>T_c\wedge T_{-d})\\
&\qquad =h^C(c,-d)\P_x(T_a\wedge T_b\wedge T_{-d}>T_c)+h^C(c,-d)\P_x(T_a\wedge T_b\wedge T_{c}>T_{-d}).
\stepcounter{equation}\tag{\theequation}
\end{align*}
By the strong Markov property, we have
\begin{align*}
&\P_x(T_a\wedge T_b\wedge T_{-d}>T_c)\\
&\qquad =\P_x(T_a\wedge T_b>T_c)-\P_x(T_a\wedge T_b>T_c>T_{-d})\\
&\qquad =\P_x(T_a\wedge T_b>T_c)-\P_{-d}(T_a\wedge T_b>T_c)\P_x(T_a\wedge T_b\wedge T_c>T_{-d})\\
&\qquad =\P_x(T_a\wedge T_b>T_c)-\P_{-d}(T_a\wedge T_b>T_c)\\
&\qquad \qquad \times \{\P_x(T_a\wedge T_b>T_{-d})-\P_c(T_a\wedge T_b>T_{-d})\P_x(T_a\wedge T_b\wedge T_{-d}>T_c)\}.
\stepcounter{equation}\tag{\theequation}
\end{align*}
Thus, we have
\begin{align*}
\label{4.8}
\P_x(T_a\wedge T_b\wedge T_{-d}>T_c)=\frac{\P_x(T_a\wedge T_b>T_c)-\P_{-d}(T_a\wedge T_b>T_c)\P_x(T_a\wedge T_b>T_{-d})}{1-\P_{-d}(T_a\wedge T_b>T_c)\P_c(T_a\wedge T_b>T_{-d})}.
\stepcounter{equation}\tag{\theequation}
\end{align*}
By (4.17) and (5.11) of \cite{IY}, we have
\begin{align*}
\label{4.9}
h^C(c,-d)\P_x(T_a\wedge T_b>T_c)&=\frac{h^C(c,-d)}{h^B(c)}\cdot h^B(c)\P_x(T_a\wedge T_b>T_c)\\
&=\frac{h^C(c,-d)}{h^B(c)}\cdot \frac{h^B(c)\P_x(T_c<T_a)-\P_x(T_b<T_a)\cdot h^B(c)\P_b(T_c<T_a)}{1-\P_c(T_b<T_a)\P_b(T_c<T_a)}\\
&\to \frac{1+\gamma}{2}\left\{h^{(+1)}(x-a)-\P_x(T_b<T_a)h^{(+1)}(b-a)\right\},
\stepcounter{equation}\tag{\theequation}
\end{align*}
as $(c,d)\stackrel{(\gamma)}{\to} \infty.$ Therefore, by (\ref{4.6}), (\ref{4.8}), and (\ref{4.9}), we obtain the limit (\ref{3.1.3}).

Finally, we prove the case of the inverse local time clock. By the strong Markov property, we have
\begin{align}
\label{4.10}
h^B(c)\P_x(T_a\wedge T_b>\eta_u^c)&=\P_c(T_a\wedge T_b >\eta_u^c)\cdot h^B(c)\P_x(T_a\wedge T_b>T_c)
\end{align}
Since $L_{T_{a-c}\wedge T_{a-c}}^0$ has an exponential distribution with parameter $\frac{1}{h^C(a-c,b-c)}$ (see, the proof of Lemma 6.3 of \cite{TY}), we have
\begin{align}
\label{4.11}
\P_c(T_a\wedge T_b>\eta_u^c)=\P_0(L_{T_{a-c}\wedge T_{b-c}}^0>u)=e^{-\frac{u}{h^C(a-c,b-c)}}\to 1,
\end{align}
as $c\to \pm \infty.$ Therefore, by (\ref{4.10}) and (\ref{4.11}), we obtain the limit (\ref{3.1.4}).
\end{proof}

Next, we prove of the $L^1(\P_x)$-convergence.

\begin{thm}
\label{3.2}
The following assertions hold:
\begin{align}
\label{3.2.1}
\text{(Ex)}&\ \lim_{q\to 0+}r_q(0)\P_{X_t}(T_a\wedge T_b>\bm{e}_q)=\varphi_{a,b}^{(0)}(X_t)\ \text{in}\ L^1(\P_x),\\
\text{(OH)}&\ \lim_{c\to \pm\infty}h^B(c)\P_{X_t}(T_a\wedge T_b>T_c)=\varphi_{a,b}^{(\pm 1)}(X_t)\ \text{in}\ L^1(\P_x),\\
\text{(TH)}&\ \lim_{(c,d)\stackrel{(\gamma)}{\to} \infty}h^C(c,-d)\P_{X_t}(T_a\wedge T_b>T_c\wedge T_{-d})=\varphi_{a,b}^{(\gamma)}(X_t)\ \text{in}\ L^1(\P_x),\\
\text{(IL)}&\ \lim_{c\to \pm\infty}h^B(c)\P_{X_t}(T_a\wedge T_b>\eta_u^c)=\varphi_{a,b}^{(\pm 1)}(X_t)\ \text{in}\ L^1(\P_x).
\end{align}
\end{thm}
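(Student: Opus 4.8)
The plan is to upgrade the almost-sure convergence of Theorem \ref{3.1} to $L^1(\P_x)$-convergence, and for this the natural tool is Scheffé's lemma: since all the quantities in question are non-negative, it suffices to prove convergence of the expectations, i.e. $\P_x[r_q(0)\P_{X_t}(T_a\wedge T_b>\bm e_q)]\to \P_x[\varphi_{a,b}^{(0)}(X_t)]$ and analogously for the other three clocks. So the first step is to identify each expectation with a quantity that can be computed directly. By the Markov property at time $t$, $\P_x[r_q(0)\P_{X_t}(T_a\wedge T_b>\bm e_q)]=r_q(0)\P_x(T_a\wedge T_b>\bm e_q+t)$, and using the lack of memory of the exponential (as in the proof of Theorem \ref{t2}) one has $r_q(0)\P_x(T_a\wedge T_b>\bm e_q+t)=r_q(0)e^{-qt}\P_x(T_a\wedge T_b>\bm e_q)-($a term of order $qr_q(0)t)$; the leading term converges by (Ex) of Theorem \ref{3.1} applied at the starting point $x$, and $qr_q(0)\to 0$ by (\ref{b8}). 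For the (OH) clock, $\P_x[h^B(c)\P_{X_t}(T_a\wedge T_b>T_c)]$ is not simply $h^B(c)\P_x(T_a\wedge T_b>T_c)$ because of the issue that $T_c$ may occur before $t$; one splits according to $\{T_c>t\}$ and $\{T_c\le t\}$ exactly as in the proof of Theorem \ref{t3}, writing $M_t^c-N_t^c$ and showing its expectation vanishes.

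The cleaner route, and the one I would actually carry out, is to invoke the general machinery already built: Theorems \ref{t2}--\ref{t5} show that \emph{once} the a.s.\ limit is known and is non-trivial, the rescaled conditional probabilities $M_t^{(\cdot)}$ converge in $L^1(\P_x)$ to $\varphi_{a,b}^{(\cdot)}(X_t)1_{\{T_a\wedge T_b>t\}}$; but $N_t^{(\cdot)}$ differs from $M_t^{(\cdot)}$ only by the correction term shown there to vanish in $L^1$, and $N_t^{(\cdot)}=(\text{rescaling})\cdot 1_{\{T_a\wedge T_b>t\}}\P_{X_t}(T_a\wedge T_b>\text{clock})$ on $\{T_a\wedge T_b>t\}$. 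On the complement $\{T_a\wedge T_b\le t\}$ the quantity $\P_{X_t}(T_a\wedge T_b>\text{clock})$ is controlled separately (it tends to $0$ after rescaling, or one simply notes $\varphi_{a,b}^{(\cdot)}$ vanishes at $a,b$). So the argument reduces to checking the two hypotheses of each of Theorems \ref{t2}--\ref{t5}: that the a.s.\ limit holds with a non-trivial $\varphi$ — this is exactly Theorem \ref{3.1} — and that $\varphi_{a,b}^{(0)}(x)/x$ converges as $|x|\to\infty$, which follows from Proposition \ref{0-2.1}(3): $h(x)/|x|\to 1/m^2$, hence $\varphi_{a,b}^{(\gamma)}(x)/x$ has finite limits $\pm(1\pm\gamma)/m^2$ at $\pm\infty$ (using also that $\P_x(T_b<T_a)\to$ a constant and $h^{(\gamma)}(b-a)$ is a fixed constant). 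The $L^1$ statement for $N_t^{(\cdot)}$, which is what Theorem \ref{3.2} literally asserts, then follows since $N_t^{(\cdot)}$ and $M_t^{(\cdot)}$ have the same $L^1$-limit.

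Concretely I would organize the proof as: (i) state that by Scheffé it suffices to show the expectations converge; (ii) for (Ex), compute $\P_x[r_q(0)\P_{X_t}(T_a\wedge T_b>\bm e_q)]=r_q(0)e^{-qt}\P_x(T_a\wedge T_b>\bm e_q)+O(qr_q(0)t)$ and pass to the limit using (\ref{3.1.1}) and (\ref{b8}); (iii) for (OH), (TH), (IL), reuse verbatim the estimates in the proofs of Theorems \ref{t3}, \ref{t4}, \ref{t5} showing $\P_x|M_t^c-N_t^c|\to 0$, which is where the hypothesis on $\varphi_{a,b}^{(\cdot)}(x)/x$ enters via the optional sampling computation; the needed convergence of $\varphi_{a,b}^{(0)}(x)/x$ is recorded as a one-line consequence of Proposition \ref{0-2.1}(3). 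The main obstacle is the (OH)/(TH)/(IL) cases: one cannot directly pass the limit inside $\P_x[\,\cdot\,]$ because the rescaled integrand need not be dominated uniformly in $c$; the device that rescues this is precisely the optional-sampling identity $\P_x[\varphi_e^T(X_{T_c})1_{\{T>T_c\}},t\ge T_c]=\P_x[\varphi_e^T(X_t)1_{\{T>t\}},t\ge T_c]$ from Theorem \ref{t3}, which converts the awkward term into an integral of a fixed $L^1$ function over the shrinking event $\{t\ge T_c\}$ (shrinking because $T_c\to\infty$ a.s.\ as $|c|\to\infty$ by recurrence), and one must be careful that the prefactor $h^B(c)/c$ stays bounded — again Proposition \ref{0-2.1}(3) — so that no blow-up occurs. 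Once that is in place, dominated convergence on the shrinking event finishes each case.
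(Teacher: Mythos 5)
Your proposal does not supply the ingredient that Theorem \ref{3.2} actually requires, and in two places it is circular or computationally incorrect.

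The ``cleaner route'' is circular: Theorems \ref{t2}--\ref{t5} take as a \emph{hypothesis} that the rescaled survival probabilities evaluated at $X_t$ converge a.s.\ \emph{and in $L^1(\P_x)$}; that $L^1$ hypothesis is precisely the content of Theorem \ref{3.2}, so those theorems cannot be invoked to prove it. Moreover, the estimate $\P_x[|M_t^c-N_t^c|]\to 0$ in Theorem \ref{t3} uses the martingale property of $\varphi_e^T(X_t)1_{\{T>t\}}$, which is itself obtained from Theorem \ref{t2} only after the (Ex) case of Theorem \ref{3.2} is known. There is also a substantive misreading of what must be shown: $N_t^c$ carries the indicator $1_{\{T_a\wedge T_b>t\}}$, whereas Theorem \ref{3.2} concerns $h^B(c)\P_{X_t}(T_a\wedge T_b>T_c)$ with no indicator. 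On $\{T_a\wedge T_b\le t\}$ this quantity converges (pointwise, by Theorem \ref{3.1}) to $\varphi^{(\pm 1)}_{a,b}(X_t)$, which is generically nonzero since $X_t$ is no longer at $a$ or $b$; your claim that it ``tends to $0$ after rescaling'' there is false, and the $L^1$ convergence on that event is part of what has to be proved.

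The Scheff\'e computation for (Ex) is also wrong: by the Markov property, $\P_x[\P_{X_t}(T_a\wedge T_b>\bm{e}_q)]=\P_x(T_a\wedge T_b\circ\theta_t>\bm{e}_q)$, not $\P_x(T_a\wedge T_b>\bm{e}_q+t)$; the two differ by the contribution of paths with $T_a\wedge T_b\le t$. Indeed, your identity would force the limit of the expectations to be $\varphi^{(0)}_{a,b}(x)=\P_x[\varphi^{(0)}_{a,b}(X_t)1_{\{T_a\wedge T_b>t\}}]$ rather than $\P_x[\varphi^{(0)}_{a,b}(X_t)]$, so Scheff\'e would fail. The missing ingredient --- and the one the paper imports --- is the $L^1(\P_x)$ convergence of the building blocks: $h_q(X_t-a)\to h(X_t-a)$ (Theorem 15.2 of \cite{Tukada}) and $h^B(c)\P_{X_t}(T_c<T_a)\to h^{(\pm 1)}(X_t-a)$ ((4.27) of \cite{IY}). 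Since the prelimit quantities are explicit algebraic combinations of these blocks (as in the proof of Theorem \ref{3.1}), the theorem follows by combining them; no soft argument from the a.s.\ convergence alone can substitute for these results, which in effect provide the integrable domination (e.g.\ by $h^B(X_t-a)$) that your outline never establishes.
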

\begin{proof}
Using the limit 
\begin{align}
\lim_{q\to 0+}h_q(X_t)=h(X_t)\ \text{in}\ L^1(\P_x)
\end{align}
(see, Theorem 15.2 of \cite{Tukada}), the case of the exponential clock can be shown.

Next, using the limit 
\begin{align}
\lim_{c\to \pm \infty}h^B(c)\P_{X_t}(T_c<T_a)=h^{(\pm 1)}(X_t-a)\ \text{in}\ L^1(\P_x)
\end{align}
(see, (4.27) of \cite{IY}), the case of the one-point hitting time clock, the two-point hitting time clock, and the inverse local time clock can be shown.
\end{proof}

We give the proof of the main theorem for conditioning to avoid two points. 

\begin{proof}[The proof of Theorem \ref{t1.1}]
Note that by (\ref{0-27}), we have
\begin{align}
\lim_{|x|\to \infty}\frac{\varphi_{a,b}^{(0)}(x)}{x}=\lim_{|x|\to \infty}\left(\frac{h(x-a)}{x}-\frac{\P_x(T_b<T_a)h^{(\gamma)}(b-a)}{x}\right)=\frac{1}{m^2}.
\end{align}
Therefore, Theorem \ref{t1.1} holds by Theorems \ref{t2}, \ref{t3}, \ref{t4}, \ref{t5}, \ref{3.1}, and \ref{3.2}.
\end{proof}

\subsection{Limit measures}
\label{SS1}
We set $\mathscr{H}^{(\gamma)}_{a,b}:=\{x\in \R;\ \varphi^{(\gamma)}_{a,b}(x)>0\},$ and we define a probability measure
\begin{align}
\P_{x;a,b}^{(\gamma)}|_{\F_t}:=\frac{\varphi_{a,b}^{(\gamma)}(X_t)}{\varphi_{a,b}^{(\gamma)}(x)}1_{\{T_a\wedge T_b>t\}}\cdot \P_x|_{\F_t}\qquad \text{for}\ x\in \mathscr{H}^{(\gamma)}_{a,b}.
\end{align}
The measure $\P_{x;a,b}^{(\gamma)}$ can be well-defined on $\F_\infty:=\sigma(X_t,\ t\ge 0)$ (see, Theorem 9.1 of \cite{Yano}). By the strong Markov property of $\P_x$ and by the martingale property, we have
\begin{align*}
\label{c3}
\P_{x;a,b}^{(\gamma)}(T_{\R\setminus \mathscr{H}_{a,b}^{(\gamma)}}\le t)&=\P_x\left[1_{\{T_{\R\setminus \mathscr{H}_{a,b}^{(\gamma)}}\le t\}}\cdot \frac{\varphi_{a,b}^{(\gamma)}(X_t)}{\varphi_{a,b}^{(\gamma)}(x)}1_{\{T_a\wedge T_b>t\}}\right]\\
&=\P_x\left[1_{\{T_{\R\setminus \mathscr{H}_{a,b}^{(\gamma)}}\le t\}}\P_{X_{T_{\R\setminus \mathscr{H}_{a,b}^{(\gamma)}}}}\left[\frac{\varphi_{a,b}^{(\gamma)}(X_{t-s})}{\varphi_{a,b}^{(\gamma)}(x)}1_{\{T_a\wedge T_b>t-s\}}\right]\Big|_{s=T_{\R\setminus \mathscr{H}_{a,b}^{(\gamma)}}}\right]\\
&=\P_x\left[1_{\{T_{\R\setminus \mathscr{H}_{a,b}^{(\gamma)}}\le t\}}\frac{\varphi_{a,b}^{(\gamma)}(X_{{T}_{\R\setminus \mathscr{H}_{a,b}^{(\gamma)}}})}{\varphi_{a,b}^{(\gamma)}(x)}\right]=0,
\stepcounter{equation}\tag{\theequation}
\end{align*}
for any $t>0$. Thus, we have $\P_{x;a,b}^{(\gamma)}(T_{\R\setminus \mathscr{H}_{a,b}^{(\gamma)}}> t)=1$ for any $t>0.$ In particular, we have $\mathbb{P}_{x;a,b}^{(\gamma)}(T_a\wedge T_b>t)=1$ for any $t>0$. Therefore, we obtain
\begin{align}
\P_{x;a,b}^{(\gamma)}(T_a=T_b=\infty)=1.
\end{align}
Thus, the measure $\mathbb{P}_{x;a,b}^{(\gamma)}$ is absolutely continuous with respect to $\P_x$ on $\F_t$, but is singular to $\P_x$ on $\F_\infty$ since $\P_x(T_a\wedge T_b<\infty)=1.$

We set $\mathscr{H}_a^{(\gamma)}:=\{x\in \R;\ h^{(\gamma)}(x-a)>0\},$ and we define
\begin{align}
\P_{x;a}^{(\gamma)}|_{\F_t}:=\frac{h^{(\gamma)}(X_t-a)}{h^{(\gamma)}(x-a)}1_{\{T_a>t\}}\cdot \P_x|_{\F_t}\qquad \text{for}\ x\in \mathscr{H}^{(\gamma)}_{a}.
\end{align}
We know that $\mathscr{H}_{a}^{(\gamma)}=\R\setminus \{a\},\ (a,\infty)$, or $(-\infty,a)$ (see, p.12 of \cite{Takeda}). Note that
\begin{align}
\mathscr{H}_{a,b}^{(\gamma)}\subset \mathscr{H}_a^{(\gamma)}\cap \mathscr{H}_b^{(\gamma)}.
\end{align}

Although The proof of the following proposition is parallel to Theorem 1.4 of Takeda \cite{Takeda}, we give the proof for completeness of this paper.
\begin{prop}
\label{c6}
For $-1\le \gamma\le 1$ and $x\in \mathscr{H}_{a,b}^{(\gamma)}$, the process $((X_t),\P_{x;a,b}^{(\gamma)})$ is transient.
\end{prop}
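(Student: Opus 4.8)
The plan is to show transience of $((X_t),\P_{x;a,b}^{(\gamma)})$ by exhibiting an excessive function that tends to $0$ along the process, or equivalently by showing that the $\P_{x;a,b}^{(\gamma)}$-law of $|X_t|$ escapes to $\infty$. The natural candidate comes from the asymptotics $\varphi_{a,b}^{(\gamma)}(x)/x \to 1/m^2$ as $|x|\to\infty$ (when $m^2<\infty$) established just before the statement of Theorem \ref{t1.1}; when $m^2=\infty$ one should instead use $h^{(\gamma)}=h$ and the fact that $h(x)\to\infty$ as $|x|\to\infty$ (a consequence of recurrence together with Proposition \ref{0-2.1}). So the strategy is: under $\P_{x;a,b}^{(\gamma)}$ the process avoids $\{a,b\}$ forever (this was just proved in (\ref{c3})), and I want to upgrade "avoids $\{a,b\}$" to "leaves every bounded set eventually."

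First I would recall from \cite{Takeda} (Theorem 1.4 there, cited in the excerpt) that the $h^{(\gamma)}$-transformed process $((X_t),\P_{x;a}^{(\gamma)})$ — the Lévy process conditioned to avoid the single point $a$ — is transient. Since $\mathscr{H}_{a,b}^{(\gamma)}\subset\mathscr{H}_a^{(\gamma)}$ and, by the two expressions in (\ref{c1}), $\varphi_{a,b}^{(\gamma)}(x)\le h^{(\gamma)}(x-a)$, the Radon--Nikodym density defining $\P_{x;a,b}^{(\gamma)}$ on $\F_t$ is dominated (up to the constant $h^{(\gamma)}(x-a)/\varphi_{a,b}^{(\gamma)}(x)$) by the density defining $\P_{x;a}^{(\gamma)}$ on $\{T_b>t\}$. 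More precisely, for a bounded interval $I$,
\begin{align*}
\P_{x;a,b}^{(\gamma)}(X_t\in I)
&=\frac{1}{\varphi_{a,b}^{(\gamma)}(x)}\P_x\!\left[\varphi_{a,b}^{(\gamma)}(X_t)1_{\{X_t\in I,\,T_a\wedge T_b>t\}}\right]\\
&\le \frac{1}{\varphi_{a,b}^{(\gamma)}(x)}\P_x\!\left[h^{(\gamma)}(X_t-a)1_{\{X_t\in I,\,T_a>t\}}\right]
=\frac{h^{(\gamma)}(x-a)}{\varphi_{a,b}^{(\gamma)}(x)}\,\P_{x;a}^{(\gamma)}(X_t\in I),
\end{align*}
so $\limsup_{t\to\infty}\P_{x;a,b}^{(\gamma)}(X_t\in I)\le \frac{h^{(\gamma)}(x-a)}{\varphi_{a,b}^{(\gamma)}(x)}\limsup_{t\to\infty}\P_{x;a}^{(\gamma)}(X_t\in I)=0$ by transience of $((X_t),\P_{x;a}^{(\gamma)})$. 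This gives that $X_t$ is not tight under $\P_{x;a,b}^{(\gamma)}$, hence $|X_t|\to\infty$ in $\P_{x;a,b}^{(\gamma)}$-probability. To promote this to transience in the sense of a.s. divergence one can use the martingale $\varphi_{a,b}^{(\gamma)}(X_t)1_{\{T_a\wedge T_b>t\}}$ under $\P_x$: it converges $\P_x$-a.s., and on the event $\{T_a\wedge T_b=\infty\}$ (which is $\P_{x;a,b}^{(\gamma)}$-a.s.) the a.s. limit of $\varphi_{a,b}^{(\gamma)}(X_t)$ together with the linear growth of $\varphi_{a,b}^{(\gamma)}$ at infinity forces $X_t$ to converge in $[-\infty,\infty]$ (a standard argument: if $X_t$ returned to a bounded set infinitely often it would contradict convergence of the density combined with the above tightness failure). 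Either way, $((X_t),\P_{x;a,b}^{(\gamma)})$ spends only a finite amount of time in each bounded set, which is transience.

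The main obstacle is handling the case $m^2=\infty$ cleanly and making the "not tight implies transient" step rigorous rather than heuristic: for a general Lévy process conditioned by an $h$-transform, "not tight" should be converted into "visits each compact set only finitely often" via either the Markov property and a $0$--$1$ law, or — more safely — by directly invoking the transience of $((X_t),\P_{x;a}^{(\gamma)})$ from \cite{Takeda} and noting that under $\P_{x;a,b}^{(\gamma)}$ the process is absolutely continuous on each $\F_t$ with respect to $\P_{x;a}^{(\gamma)}$ with a bounded density (bounded because $\varphi_{a,b}^{(\gamma)}(X_t)\le h^{(\gamma)}(X_t-a)$ pathwise), so the transient behaviour transfers. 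That absolute-continuity-with-bounded-density observation is the cleanest route and is what I would write up; the comparison $\varphi_{a,b}^{(\gamma)}\le h^{(\gamma)}(\cdot-a)$ is immediate from the defining formula (\ref{c1}) since $\P_x(T_b<T_a)h^{(\gamma)}(b-a)\ge 0$, and it is exactly the structural feature that makes the single-point result in \cite{Takeda} do the work here.
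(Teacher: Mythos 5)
Your final route (transfer transience from the single--point conditioned law via a uniformly bounded density) is sound, but it is genuinely different from the paper's argument. The paper does not invoke the transience of $((X_t),\P_{x;a}^{(\gamma)})$ at all; it explicitly reruns Takeda's proof for $\varphi_{a,b}^{(\gamma)}$ directly: the computation $\P_{x;a,b}^{(\gamma)}[F_s/\varphi_{a,b}^{(\gamma)}(X_t)]=\varphi_{a,b}^{(\gamma)}(x)^{-1}\P_x[F_s;\,T_a\wedge T_b>t]\le\P_{x;a,b}^{(\gamma)}[F_s/\varphi_{a,b}^{(\gamma)}(X_s)]$ shows that $1/\varphi_{a,b}^{(\gamma)}(X_t)$ is a non-negative $\P_{x;a,b}^{(\gamma)}$-supermartingale, hence converges a.s.; by Fatou the limit has expectation at most $\varliminf_t\varphi_{a,b}^{(\gamma)}(x)^{-1}\P_x(T_a\wedge T_b>t)=0$ by recurrence of the unconditioned process, so $\varphi_{a,b}^{(\gamma)}(X_t)\to\infty$ a.s., which forces $|X_t|\to\infty$ because $\varphi_{a,b}^{(\gamma)}$ is locally bounded. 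That argument is self-contained, and it makes your worry about the case $m^2=\infty$ a red herring: no growth asymptotics of $\varphi_{a,b}^{(\gamma)}$ are needed, only continuity and the recurrence of $\P_x$. Your approach buys brevity at the price of an external input (Takeda's Theorem 1.4, which the paper is deliberately reproving ``for completeness'') and of one step you must not leave implicit: the comparison $\P_{x;a,b}^{(\gamma)}(A)\le\frac{h^{(\gamma)}(x-a)}{\varphi_{a,b}^{(\gamma)}(x)}\,\P_{x;a}^{(\gamma)}(A)$ is established only for $A\in\bigcup_t\F_t$, whereas $\{|X_t|\to\infty\}$ lives in $\F_\infty$. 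The extension to $\sigma(\bigcup_t\F_t)$ is standard precisely because the density bound is uniform in $t$ and both measures are finite (approximate $A\in\F_\infty$ by elements of the algebra in $(\mu+\nu)$-measure); but absolute continuity on each $\F_t$ alone would prove nothing about tail events --- the paper itself stresses that $\P_{x;a,b}^{(\gamma)}$ is absolutely continuous with respect to $\P_x$ on each $\F_t$ yet singular on $\F_\infty$ --- so the uniform bound $\varphi_{a,b}^{(\gamma)}\le h^{(\gamma)}(\cdot-a)$ is doing all the work and the passage to $\F_\infty$ deserves an explicit sentence. Your first (tightness) argument, by contrast, should be dropped: as you concede, ``not tight'' does not yield a.s.\ divergence without further work, and the domination argument supersedes it.
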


\begin{proof}
For $0<s<t$ and a non-negative bounded $\F_s$-measurable functional $F_s$, we have
\begin{align*}
\P_{x;a,b}^{(\gamma)}\left[\frac{1}{\varphi_{a,b}^{(\gamma)}(X_t)}\cdot F_s\right]&=\frac{1}{\varphi_{a,b}^{(\gamma)}(x)}\P_x[F_s;\ T_a\wedge T_b>t]\\
&\le \frac{1}{\varphi_{a,b}^{(\gamma)}(x)}\P_x[F_s;\ T_a\wedge T_b>s]=\P_{x;a,b}^{(\gamma)}\left[\frac{1}{\varphi_{a,b}^{(\gamma)}(X_s)}\cdot F_s\right].
\stepcounter{equation}\tag{\theequation}
\end{align*}
Thus, $(\frac{1}{\varphi_{a,b}^{(\gamma)}(X_t)})_{t\ge 0}$ is a non-negative $\P_{x;a,b}^{(\gamma)}$-supermartingale. By the martingale convergence theorem, $\lim_{t\to \infty}\frac{1}{\varphi_{a,b}^{(\gamma)}(X_t)}$ exists $\P_{x;a,b}^{(\gamma)}$-a.s. By Fatou's lemma and recurrence of $((X_t),\P_x)$, we have
\begin{align}
\P_{x;a,b}^{(\gamma)}\left[\lim_{t\to \infty}\frac{1}{\varphi_{a,b}^{(\gamma)}(X_t)}\right]\le \varliminf_{t\to \infty}\P_{x;a,b}^{(\gamma)}\left[\frac{1}{\varphi_{a,b}^{(\gamma)}(X_t)}\right]=\frac{1}{\varphi_{a,b}^{(\gamma)}}\varliminf_{t\to \infty}\P_x(T_a\wedge T_b>t)=0.
\end{align}
Thus, we have $\lim_{t\to \infty}\frac{1}{\varphi_{a,b}^{(\gamma)}(X_t)}=0$ $\P_{x;a,b}^{(\gamma)}$-a.s., and it implies $\lim_{t\to \infty}|X_t|=\infty$ $\P_{x;a,b}^{(\gamma)}$-a.s. Therefore, the process $((X_t),\P_{x;a,b}^{(\gamma)})$ is transient.
\end{proof}

We define the measures on $\F_\infty$ as follows:
\begin{align}
\mathscr{P}_{x;a}^{(\gamma)}&:=h^{(\gamma)}(x-a)\cdot \P_{x;a}^{(\gamma)}\qquad \ \mathrm{for}\ x\in\mathscr{H}_{a}^{(\gamma)},\\
\mathscr{P}_{x;a,b}^{(\gamma)}&:=\varphi_{a,b}^{(\gamma)}(x)\cdot \P_{x;a,b}^{(\gamma)}\qquad \mathrm{for}\ x\in\mathscr{H}_{a,b}^{(\gamma)}.
\end{align}
Note that $\mathscr{P}_{x;a}^{(\gamma)}$ and $\mathscr{P}_{x;a,b}^{(\gamma)}$ are bounded measures.

\begin{prop}
Let $-1\le \gamma\le 1$ and $x\in \mathscr{H}_{a,b}^{(\gamma)}$. Then, it holds that
\begin{align}
\mathscr{P}_{x;a,b}^{(\gamma)}=1_{\{T_b=\infty\}}\cdot \mathscr{P}_{x;a}^{(\gamma)}.
\end{align}
\end{prop}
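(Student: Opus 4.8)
The plan is to prove the identity by testing both measures against an arbitrary bounded $\F_t$-measurable functional $F_t$ and showing they agree for every $t>0$; since both sides are $\sigma$-finite (in fact bounded) measures on $\F_\infty=\sigma(X_s,\ s\ge 0)$ and agree on the generating algebra $\bigcup_t\F_t$, a monotone class argument then gives equality on $\F_\infty$. So fix $t>0$ and a bounded $\F_t$-measurable $F_t\ge 0$. By the definitions of $\mathscr{P}_{x;a,b}^{(\gamma)}$ and $\P_{x;a,b}^{(\gamma)}$ we have
\begin{align*}
\mathscr{P}_{x;a,b}^{(\gamma)}[F_t]=\P_x\!\left[F_t\,\varphi_{a,b}^{(\gamma)}(X_t)\,1_{\{T_a\wedge T_b>t\}}\right],
\end{align*}
and similarly, since on $\F_\infty$ the density of $\mathscr{P}_{x;a}^{(\gamma)}$ restricted to $\F_t$ is $h^{(\gamma)}(X_t-a)1_{\{T_a>t\}}$,
\begin{align*}
\left(1_{\{T_b=\infty\}}\cdot\mathscr{P}_{x;a}^{(\gamma)}\right)[F_t]
=\P_x\!\left[F_t\,1_{\{T_b=\infty\}}\,h^{(\gamma)}(X_t-a)\,1_{\{T_a>t\}}\right].
\end{align*}
Thus it suffices to show $\P_x[F_t\,\varphi_{a,b}^{(\gamma)}(X_t)1_{\{T_a\wedge T_b>t\}}]=\P_x[F_t\,1_{\{T_b=\infty\}}h^{(\gamma)}(X_t-a)1_{\{T_a>t\}}]$.

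The next step is to expand the right-hand side using $1_{\{T_a>t\}}=1_{\{T_a\wedge T_b>t\}}+1_{\{T_b\le t<T_a\}}$ and $1_{\{T_b=\infty\}}\le 1_{\{T_b>t\}}$. On $\{T_a\wedge T_b>t\}$ we have $1_{\{T_b=\infty\}}=1$ after conditioning on $\F_t$ only in distribution, so I would instead handle the two pieces by the strong Markov property at $t$: write $1_{\{T_b=\infty\}}=1_{\{T_b>t\}}\cdot\theta_t 1_{\{T_b=\infty\}}$ and apply the Markov property to get
\begin{align*}
\P_x\!\left[F_t\,1_{\{T_b=\infty\}}h^{(\gamma)}(X_t-a)1_{\{T_a>t\}}\right]
=\P_x\!\left[F_t\,1_{\{T_a\wedge T_b>t\}}h^{(\gamma)}(X_t-a)\,\P_{X_t}(T_b=\infty)\right]
\end{align*}
plus a term over $\{T_b\le t<T_a\}$ on which $\P_{X_t}(T_b=\infty)$ would appear but the indicator $1_{\{T_b>t\}}$ kills it; more carefully, the only surviving contribution is on $\{T_a\wedge T_b>t\}$. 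So the claim reduces to the pointwise identity, for $y\in\mathscr{H}_{a,b}^{(\gamma)}$,
\begin{align*}
\varphi_{a,b}^{(\gamma)}(y)=h^{(\gamma)}(y-a)\,\P_y(T_b=\infty).
\end{align*}
Comparing with the definition $\varphi_{a,b}^{(\gamma)}(y)=h^{(\gamma)}(y-a)-\P_y(T_b<T_a)h^{(\gamma)}(b-a)$, this amounts to showing $h^{(\gamma)}(y-a)\P_y(T_b<\infty)=\P_y(T_b<T_a)h^{(\gamma)}(b-a)$, equivalently that under $\P_y$ conditioned on $T_a=\infty$ (the $h^{(\gamma)}(\cdot-a)$-transform, whose law is $\P_{y;a}^{(\gamma)}$) the process hits $b$ with probability $\P_y(T_b<T_a)h^{(\gamma)}(b-a)/h^{(\gamma)}(y-a)$, and then never returns near... — actually the cleanest route is: decompose $\{T_b<\infty\}$ under the $h$-transform $\P_{y;a}^{(\gamma)}$ by the first hit of $b$, use the strong Markov property of $\P^{(\gamma)}_{y;a}$ at $T_b$ together with the fact (from the displayed supermartingale/transience arguments and Takeda's Theorem 1.4) that $\P_{b;a}^{(\gamma)}(T_b<\infty)=$ the return probability, and sum the geometric series; this yields $\P_{y;a}^{(\gamma)}(T_b<\infty)$ in closed form, and multiplying back by $h^{(\gamma)}(y-a)$ gives exactly $\P_y(T_b<T_a)h^{(\gamma)}(b-a)$.

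The main obstacle is the reduction to the pointwise identity $\varphi_{a,b}^{(\gamma)}(y)=h^{(\gamma)}(y-a)\P_y(T_b=\infty)$: one must verify that the $h^{(\gamma)}(\cdot-a)$-transformed process, i.e. $((X_t),\P_{y;a}^{(\gamma)})$, is precisely the process "conditioned to avoid $a$," and identify its hitting probability of $b$. Here I would invoke the known description of $\mathscr{H}_a^{(\gamma)}$ and the strong Markov property under $\P_{y;a}^{(\gamma)}$ (already used in the excerpt for $\P_{x;a,b}^{(\gamma)}$), reducing everything to the scalar return probability at $b$ under the $a$-avoiding transform, which is computed exactly as in Takeda \cite{Takeda}, Theorem 1.4. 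Once that identity is in hand, the monotone class wrap-up is routine: the two bounded measures agree on each $\F_t$, hence on $\F_\infty$.
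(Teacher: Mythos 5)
There is a genuine gap, and it occurs at the very first reduction. You write that, ``since on $\F_\infty$ the density of $\mathscr{P}_{x;a}^{(\gamma)}$ restricted to $\F_t$ is $h^{(\gamma)}(X_t-a)1_{\{T_a>t\}}$,''
\begin{align*}
\bigl(1_{\{T_b=\infty\}}\cdot\mathscr{P}_{x;a}^{(\gamma)}\bigr)[F_t]
=\P_x\bigl[F_t\,1_{\{T_b=\infty\}}\,h^{(\gamma)}(X_t-a)\,1_{\{T_a>t\}}\bigr].
\end{align*}
This is not valid: the event $\{T_b=\infty\}$ is $\F_\infty$-measurable but not $\F_t$-measurable, and $\mathscr{P}_{x;a}^{(\gamma)}$ is \emph{singular} to $\P_x$ on $\F_\infty$ (the paper notes $\P_{x;a}^{(\gamma)}(T_a=\infty)=1$ while $\P_x(T_a<\infty)=1$ by recurrence), so you cannot push the $\F_t$-density onto an $\F_\infty$-measurable integrand. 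Indeed, by recurrence $\P_x(T_b=\infty)=0$, so your right-hand side is identically zero. The same error propagates to the ``pointwise identity'' you reduce to, namely $\varphi_{a,b}^{(\gamma)}(y)=h^{(\gamma)}(y-a)\,\P_y(T_b=\infty)$: its right-hand side vanishes identically, so the identity is false as written. The correct statement — which the paper obtains as a \emph{consequence} of the proposition — is $\varphi_{a,b}^{(\gamma)}(y)=h^{(\gamma)}(y-a)\,\P_{y;a}^{(\gamma)}(T_b=\infty)$, with the probability taken under the $h^{(\gamma)}(\cdot-a)$-transformed (transient) law, not under $\P_y$. You do gesture at this in the last paragraph, but the chain of displayed reductions leading there is broken, and the final ``geometric series'' step is left entirely unexecuted.

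A repaired version of your strategy is viable and would differ from the paper's argument: compute $\mathscr{P}_{x;a}^{(\gamma)}[F_t 1_{\{T_b=\infty\}}]$ by the Markov property of the conditioned process, giving $\P_x[F_t\,1_{\{T_a\wedge T_b>t\}}h^{(\gamma)}(X_t-a)\,\P_{X_t;a}^{(\gamma)}(T_b=\infty)]$, and then establish $h^{(\gamma)}(y-a)\P_{y;a}^{(\gamma)}(T_b<\infty)=h^{(\gamma)}(b-a)\P_y(T_b<T_a)$ by optional stopping at $T_b$ exactly as in the paper's computation of $\P_{x;a_n}^{(\gamma)}(T_{A_{n-1}}<\infty)$ in Section 5.1. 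The paper itself takes a different and shorter route: it uses the transience of $((X_t),\P_{x;a,b}^{(\gamma)})$ from Proposition 3.6 to get $\varphi_{a,b}^{(\gamma)}(X_t)/h^{(\gamma)}(X_t-a)\to 1$ a.s., and then invokes the universality theorem (Theorem 4.1 of Yano) for the multiplicative weights $\Gamma_t=1_{\{T_a\wedge T_b>t\}}$ and $\mathscr{E}_t=1_{\{T_a>t\}}$. As submitted, however, your proof does not go through.
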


\begin{proof}
Since $((X_t),\P_{x;a,b}^{(\gamma)})$ is transient, we have $\lim_{t\to \infty}|X_t|=\infty,\ \P_{x;a,b}^{(\gamma)}$-a.s. Thus, we have
\begin{align}
\lim_{t\to \infty}\frac{\varphi^{(\gamma)}_{a,b}(X_t)}{h^{(\gamma)}(X_t-a)}=1,\qquad \P_{x;a,b}^{(\gamma)}\text{-a.s.}
\end{align}
Therefore, we apply to Theorem 4.1 of \cite{Yano} as $\Gamma_t=1_{\{T_a\wedge T_b>t\}}$ and $\mathscr{E}_t=1_{\{T_a>t\}}$, then the assertion holds.
\end{proof}

Consequently, we obtain
\begin{align}
\P_{x;a,b}^{(\gamma)}=\frac{h^{(\gamma)}(x-a)}{\varphi_{a,b}^{(\gamma)}(x)}1_{\{T_b=\infty\}}\cdot \mathbb{P}_{x;a}^{(\gamma)}.
\end{align}
This implies 
\begin{align}
\varphi_{a,b}^{(\gamma)}(x)=h^{(\gamma)}(x-a)\P_{x;a}^{(\gamma)}(T_b=\infty).
\end{align}
In particular, we have
\begin{align}
\P_{x;a}^{(\gamma)}(\cdot |\ T_b=\infty)=\P_{x;a,b}^{(\gamma)}(\cdot).
\end{align}
Therefore, $\P_{x;a,b}^{(\gamma)}$ is absolutely continuous with respect to $\P_{x;a}^{(\gamma)}.$

\subsection{Examples}
\subsubsection*{Brownian motion}
Let $(X_t)$ be a standard Brownian motion. Then, we know that $h^{(\gamma)}(x)=|x|+\gamma x$ (see, e.g., Example 3.9 of \cite{TY}). Note that
\begin{align}
\P_x(T_a<T_b)=\frac{b-x}{b-a}\qquad \text{for}\ a<x<b
\end{align}
(see, e.g., Theorem 7.5.3 of \cite{Durret}).
\begin{enumerate}
\item If $x<a,$ then we have
\begin{align}
\varphi^{(\gamma)}_{a,b}(x)&=a-x+\gamma(x-a)=(1-\gamma)(a-x).
\end{align}
Thus, we obtain $\P_{x;a,b}^{(\gamma)}=\P_{x;a}^{(\gamma)}$.
\item If $a<x<b$, then we have
\begin{align}
\varphi^{(\gamma)}_{a,b}(x)&=x-a+\gamma(x-a)-\frac{x-a}{b-a}(b-a+\gamma(b-a))=0.
\end{align}
\item If $b<x$, then we have
\begin{align}
\varphi^{(\gamma)}_{a,b}(x)&=x-a+\gamma(x-a)-(b-a+\gamma(b-a))=(1+\gamma)(x-b).
\end{align}
Thus, we obtain $\P_{x;a,b}^{(\gamma)}=\P_{b;x}^{(\gamma)}$.
\end{enumerate}
Therefore, we obtain $\mathscr{H}_{a,b}^{(\gamma)}=(-\infty,a)\cup (b,\infty).$

\subsubsection*{Stable process}
Let $(X_t)$ be a strictly stable process of index $\alpha\in (1,2).$ Then, we know that
\begin{align}
h^{(\gamma)}(x)=\frac{1}{K(\alpha)}(1-\beta\ \mathrm{sgn}(x))|x|^{\alpha-1}
\end{align}
(see, Section 5 of \cite{Yano2}). Note that $h^{(\gamma)}$ is independent of $\gamma.$ By Lemma 3.5 of \cite{TY}, we have
\begin{align*}
\P_x(T_b<T_a)=\frac{(1+\beta)(b-a)^{\alpha-1}+(1-\beta\ \mathrm{sgn}(x-a))|x-a|^{\alpha-1}-(1-\beta\ \mathrm{sgn}(x-b))|x-b|^{\alpha-1}}{2(b-a)^{\alpha-1}}
\stepcounter{equation}\tag{\theequation}
\end{align*}
for $x\in \R.$ 
\begin{enumerate}
\item If $x<a$, then we have
\begin{align}
\varphi_{a,b}^{(\gamma)}(x)=\frac{1}{K(\alpha)}\left\{\frac{(1+\beta)^2}{2}(a-x)^{\alpha-1}+\frac{(1+\beta)(1-\beta)}{2}(b-x)^{\alpha-1}-\frac{(1+\beta)(1-\beta)}{2}(b-a)^{\alpha-1}\right\}.
\end{align}
\item If $a<x<b$, then
\begin{align}
\varphi_{a,b}^{(\gamma)}(x)=\frac{1}{K(\alpha)}\frac{(1+\beta)(1-\beta)}{2}\left\{(x-a)^{\alpha-1}+(b-x)^{\alpha-1}-(b-a)^{\alpha-1}\right\}.
\end{align}
\item If $b<x$, then
\begin{align}
\varphi_{a,b}^{(\gamma)}(x)=\frac{1}{K(\alpha)}\left\{\frac{(1+\beta)(1-\beta)}{2}(x-a)^{\alpha-1}+\frac{(1-\beta)^2}{2}(x-b)^{\alpha-1}-\frac{(1+\beta)(1-\beta)}{2}(b-a)^{\alpha-1}\right\}.
\end{align}
\end{enumerate}
Therefore, we obtain $\mathscr{H}_{a,b}^{(\gamma)}=\R\setminus \{a,b\}$ when $\beta\neq \pm 1$, $\mathscr{H}_{a,b}^{(\gamma)}=(-\infty,a)$ when $\beta=1$, and $\mathscr{H}_{a,b}^{(\gamma)}=(b,\infty)$ when $\beta=-1.$

\subsubsection*{Spectrally negative \Levy\ process}
Let $(X_t)$ be a  recurrent spectrally negative \Levy\ process. Then, we know that 
\begin{align}
h^{(\gamma)}(x)=W(x)+\frac{\gamma-1}{m^2}x,
\end{align}
where $W(x)$ is the scale function of $(X_t)$ (see, Example 28 of \cite{Pa}). By Lemma 3.5 of \cite{TY}, we have
\begin{align}
\P_x(T_b<T_a)=\frac{W(x-a)-W(x-b)}{W(b-a)}.
\end{align}
\begin{enumerate}
\item If $x<a$, then we have
\begin{align}
\varphi^{(\gamma)}_{a,b}(x)=\frac{\gamma-1}{m^2}(x-a).
\end{align}
\item If $a<x<b$, then we have
\begin{align}
\varphi_{a,b}^{(\gamma)}(x)=\frac{\gamma-1}{m^2}\left\{(x-a)-\frac{W(x-a)}{W(b-a)}(b-a)\right\}.
\end{align}
\item If $b<x$, then we have
\begin{align}
\varphi_{a,b}^{(\gamma)}(x)=W(x-b)+\frac{\gamma-1}{m^2}\left\{(x-a)-\frac{W(x-a)-W(x-b)}{W(b-a)}(b-a)\right\}.
\end{align}
\end{enumerate}
Therefore, we obtain $\mathscr{H}_{a,b}^{(\gamma)}=\R\setminus \{a,b\}$ when $m^2<\infty$, and $\mathscr{H}_{a,b}^{(\gamma)}=(b,\infty)$ when $m^2=\infty.$


\section{Conditioning to avoid $n$-points}
\label{S4}
\subsection{The case of $x\neq a_n$}
For a sequence $a_1,a_2,...$ of distinct points of $\R$, we set $A_n:=\{a_1,...,a_n\}$ for $n=2,3,...$. We define for $n=2,3,...$,
\begin{align}
\label{d1}
\varphi_{A_n}^{(\gamma)}(x):=h^{(\gamma)}(x-a_n)\mathbb{P}^{(\gamma)}_{x;a_n}(T_{A_{n-1}}=\infty).
\end{align}
Note that $\varphi_{A_2}^{(\gamma)}(x)$ is already defined in (\ref{c1}). For any $t>0$, we have by the optional sampling theorem,
\begin{align*}
\P_{x;a_n}^{(\gamma)}(T_{A_{n-1}}<t)&=\P_x\left[1_{\{T_{A_{n-1}}<t\}}\cdot \frac{h^{(\gamma)}(X_t-a_n)}{h^{(\gamma)}(x-a_n)}1_{\{T_{a_n}>t\}}\right]\\
&=\P_x\left[1_{\{T_{A_{n-1}}<t\}}\cdot \P_x\left[\frac{h^{(\gamma)}(X_t-a_n)}{h^{(\gamma)}(x-a_n)}1_{\{T_{a_n}>t\}}\Big|\F_{T_{A_{n-1}}}\right]\right]\\
&=\P_x\left[1_{\{T_{A_{n-1}}<t\}}\cdot \frac{h^{(\gamma)}(X_{T_{A_{n-1}}}-a_n)}{h^{(\gamma)}(x-a_n)}1_{\{T_{a_n}>T_{A_{n-1}}\}}\right].
\stepcounter{equation}\tag{\theequation}
\end{align*}
Letting $t\to \infty,$ we have
\begin{align*}
\P_{x;a_n}^{(\gamma)}(T_{A_{n-1}}<\infty)&=\P_x\left[\frac{h^{(\gamma)}(X_{T_{A_{n-1}}}-a_n)}{h^{(\gamma)}(x-a_n)}1_{\{T_{a_n}>T_{A_{n-1}}\}}\right]\\
&=\frac{1}{h^{(\gamma)}(x-a_n)}\sum_{k=1}^{n-1} \P_x\left[h^{(\gamma)}(X_{T_{A_{n-1}}}-a_n)1_{\{T_{a_n}>T_{A_{n-1}}=T_{a_k}\}}\right]\\
&=\frac{1}{h^{(\gamma)}(x-a_n)}\sum_{k=1}^{n-1} h^{(\gamma)}(a_k-a_n)\P_x(T_{a_k}= T_{A_n}).
\stepcounter{equation}\tag{\theequation}
\end{align*}
Therefore, we obtain
\begin{align*}
\varphi_{A_n}^{(\gamma)}(x)&=h^{(\gamma)}(x-a_n)-\sum_{k=1}^{n-1} h^{(\gamma)}(a_k-a_n)\P_x(T_{a_k}= T_{A_n})\\
&=h^{(\gamma)}(x-a_n)-\P_x[h^{(\gamma)}(X_{T_{A_n}}-a_n)].
\stepcounter{equation}\tag{\theequation}
\end{align*}
Moreover, by a simple calculation, we have
\begin{align}
\varphi_{A_n}^{(\gamma)}(x)=\varphi_{A_{n-1}}^{(\gamma)}(x)-\varphi_{A_{n-1}}^{(\gamma)}(a_n)\P_{x}(T_{a_n}< T_{A_{n-1}}).
\end{align}
By the induction and (6.2) of \cite{TY}, we can express $\P_x(T_{a_k}= T_{A_n})$ only in terms of $h$. In particular, we can express $\varphi_{A_n}^{(\gamma)}(x)$ as a linear combination of $h^{(\gamma)}(x-a_1),...,\ h^{(\gamma)}(x-a_n).$

First, we consider the a.s. limits.

\begin{prop}
\label{dp1}
The following assertions hold:
\begin{align}
\text{(Ex)}&\ \lim_{q\to 0+}r_q(0)\P_x(T_{A_n}>\bm{e}_q)=\varphi_{A_n}^{(0)}(x),\\
\text{(OH)}&\ \lim_{c\to \pm\infty}h^B(c)\P_x(T_{A_n}>T_c)=\varphi_{A_n}^{(\pm 1)}(x),\\
\text{(TH)}&\ \lim_{(c,d)\stackrel{(\gamma)}{\to} \infty}h^C(c,-d)\P_x(T_{A_n}>T_c\wedge T_{-d})=\varphi_{A_n}^{(\gamma)}(x),\\
\text{(IL)}&\ \lim_{c\to \pm\infty}h^B(c)\P_x(T_{A_n}>\eta_u^c)=\varphi_{A_n}^{(\pm 1)}(x).
\end{align}
\end{prop}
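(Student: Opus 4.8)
The plan is to prove the four assertions together by induction on $n\ge 2$. The base case $n=2$ is exactly Theorem \ref{3.1} after the identification $\varphi_{A_2}^{(\gamma)}=\varphi_{a_2,a_1}^{(\gamma)}$ (indeed $T_{a_1}=T_{A_2}$ iff $T_{a_1}<T_{a_2}$ up to a $\P_x$-null set). So assume the four limits hold with $n$ replaced by $n-1$, for an arbitrary starting point; in particular they hold both at $x$ and at $a_n$, the latter being legitimate since $a_n\notin A_{n-1}$.

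The core of the induction step is a strong Markov decomposition of $T_{A_n}=T_{A_{n-1}}\wedge T_{a_n}$. For each clock $\tau\in\{\bm{e}_q,\ T_c,\ T_c\wedge T_{-d}\}$ I would first write, up to $\P_x$-null sets,
\begin{align*}
\{T_{A_n}>\tau\}=\{T_{A_{n-1}}>\tau\}\setminus\left(\{T_{a_n}<T_{A_{n-1}}\wedge\tau\}\cap\{\tau<T_{A_{n-1}}\}\right),
\end{align*}
which holds because on the set that is removed one necessarily has $T_{a_n}<\tau<T_{A_{n-1}}$. The point of rewriting the ``bad'' event this way is that $\{T_{a_n}<T_{A_{n-1}}\wedge\tau\}$ is measurable with respect to $\F_\sigma$, where $\sigma$ is the minimum of $T_{a_n}$, $T_{A_{n-1}}$ and the stopping times defining $\tau$, and on that event $\sigma=T_{a_n}$ and $X_\sigma=a_n$. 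Applying the strong Markov property at $\sigma$ (for $\tau=\bm{e}_q$, applying it at $T_{a_n}$ together with the lack-of-memory property) turns $\{\tau<T_{A_{n-1}}\}$ into the same event for the process restarted at $a_n$, and yields
\begin{align*}
\rho(\tau)\,\P_x(T_{A_n}>\tau)=\rho(\tau)\,\P_x(T_{A_{n-1}}>\tau)-\P_x\left(T_{a_n}<T_{A_{n-1}}\wedge\tau\right)\cdot\rho(\tau)\,\P_{a_n}(T_{A_{n-1}}>\tau),
\end{align*}
where $\rho(\tau)$ is $r_q(0)$, $h^B(c)$, or $h^C(c,-d)$ respectively; for $\tau=\bm{e}_q$ one also notes $\P_x(T_{a_n}<T_{A_{n-1}}\wedge\bm{e}_q)=\P_x[e^{-qT_{a_n}}1_{\{T_{a_n}<T_{A_{n-1}}\}}]$.

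Then I would pass to the limit in this identity. The first and last terms converge, by the induction hypothesis, to $\varphi_{A_{n-1}}^{(\gamma)}(x)$ and $\varphi_{A_{n-1}}^{(\gamma)}(a_n)$ respectively, with $\gamma=0$ for (Ex), $\gamma=\pm1$ for (OH), and the prescribed $\gamma$ for (TH). For the middle factor I would use dominated convergence: recurrence of $((X_t),\P_x)$ gives $\sup_{s\le M}|X_s|<\infty$ $\P_x$-a.s.\ for every $M$, hence $T_c\to\infty$ and $T_{-d}\to\infty$ $\P_x$-a.s., so $\P_x(T_{a_n}<T_{A_{n-1}}\wedge\tau)\to\P_x(T_{a_n}<T_{A_{n-1}})$ in cases (OH), (TH); in case (Ex) the same limit follows from $e^{-qT_{a_n}}\to1$ $\P_x$-a.s.\ as $q\to0+$. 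Hence the right-hand side tends to $\varphi_{A_{n-1}}^{(\gamma)}(x)-\varphi_{A_{n-1}}^{(\gamma)}(a_n)\P_x(T_{a_n}<T_{A_{n-1}})$, which equals $\varphi_{A_n}^{(\gamma)}(x)$ by the recursion $\varphi_{A_n}^{(\gamma)}(x)=\varphi_{A_{n-1}}^{(\gamma)}(x)-\varphi_{A_{n-1}}^{(\gamma)}(a_n)\P_x(T_{a_n}<T_{A_{n-1}})$ recorded just above the proposition. This settles (Ex), (OH), (TH).

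Finally, (IL) would be deduced from (OH) at the same level $n$. Since $\eta_u^c\ge T_c$, the strong Markov property at $T_c$ gives $h^B(c)\,\P_x(T_{A_n}>\eta_u^c)=\P_c(T_{A_n}>\eta_u^c)\cdot h^B(c)\,\P_x(T_{A_n}>T_c)$; the second factor tends to $\varphi_{A_n}^{(\pm1)}(x)$ by the case (OH) just proved, and the first factor tends to $1$: as in the two-point case (proof of Lemma 6.3 of \cite{TY}), $L_{T_{A_n-c}}^0$ is exponentially distributed under $\P_0$, so $\P_c(T_{A_n}>\eta_u^c)=\exp(-u/\P_0[L_{T_{A_n-c}}^0])$, and $\P_0[L_{T_{A_n-c}}^0]\to\infty$ as $c\to\pm\infty$ (its reciprocal, the rate at which excursions from $0$ hit $A_n-c$, is at most $\sum_{k=1}^n 1/h^B(a_k-c)$, and $h^B(a_k-c)=\P_0[L_{T_{a_k-c}}^0]\to\infty$ since $L_{T_c}^0\uparrow L_\infty^0=\infty$ by recurrence). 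I expect the only delicate point to be the measurability bookkeeping in the decomposition --- recognizing the ``bad'' event $\{T_{a_n}\le\tau<T_{A_{n-1}}\}$ as $\{T_{a_n}<T_{A_{n-1}}\wedge\tau\}\cap\{\tau<T_{A_{n-1}}\}$ so that the first factor becomes $\F_\sigma$-measurable and the second a clean restart from $a_n$; once this is in place, every limit is delivered by Theorem \ref{3.1}, the induction hypothesis, the recursion for $\varphi_{A_n}^{(\gamma)}$, and the L\'evy-process facts already quoted in the paper.
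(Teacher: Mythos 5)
Your proposal is correct and follows essentially the same route as the paper: induction on $n$ with base case Theorem \ref{3.1}, the strong Markov decomposition $\rho(\tau)\P_x(T_{A_n}>\tau)=\rho(\tau)\P_x(T_{A_{n-1}}>\tau)-\P_x(T_{a_n}<T_{A_{n-1}}\wedge\tau)\,\rho(\tau)\P_{a_n}(T_{A_{n-1}}>\tau)$, and the recursion $\varphi_{A_n}^{(\gamma)}(x)=\varphi_{A_{n-1}}^{(\gamma)}(x)-\varphi_{A_{n-1}}^{(\gamma)}(a_n)\P_x(T_{a_n}<T_{A_{n-1}})$. Your explicit treatment of (IL) via (OH) and the decay of the excursion rate $n(T_{A_n-c}<T_0)\le\sum_k 1/h^B(a_k-c)$ supplies details the paper omits with ``the rest of the proof is the same,'' and matches how the two-point case (\ref{3.1.4}) is handled.
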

\begin{proof}
First, we show the case of the exponential clock by the induction. The case $n=2$ is already shown in (\ref{3.1.1}). For general $n\ge 3$, we have by the strong Markov property and by the hypothesis of induction,
\begin{align*}
r_q(0)\P_x(T_{A_n}>\bm{e}_q)&=r_q(0)\P_x(T_{A_{n-1}}>\bm{e}_q)-r_q(0)\P_x(T_{A_{n-1}}>\bm{e}_q> T_{a_{n}})\\
&=r_q(0)\P_x(T_{A_{n-1}}>\bm{e}_q)-r_q(0)\P_{a_n}(T_{A_{n-1}}>\bm{e}_q)\P_x(T_{a_n}< T_{A_{n-1}}\wedge \bm{e}_q)\\
&\to \varphi_{A_{n-1}}^{(0)}(x)-\varphi_{A_{n-1}}^{(0)}(a_n)\P_{x}(T_{a_n}< T_{A_{n-1}})=\varphi_{A_n}^{(0)}(x),
\stepcounter{equation}\tag{\theequation}
\end{align*}
as $q\to 0+$.

The rest of the proof is the same as the case of the exponential clock, so we omit it.
\end{proof}

Next, we consider the $L^1(\P_x)$-limits.

\begin{prop}
\label{dp2}
The following assertions hold:
\begin{align}
\text{(Ex)}&\ \lim_{q\to 0+}r_q(0)\P_{X_t}(T_{A_n}>\bm{e}_q)=\varphi_{A_n}^{(0)}(X_t)\ \text{in}\ L^1(\P_x),\\
\text{(OH)}&\ \lim_{c\to \pm\infty}h^B(c)\P_{X_t}(T_{A_n}>T_c)=\varphi_{A_n}^{(\pm 1)}(X_t)\ \text{in}\ L^1(\P_x),\\
\text{(TH)}&\ \lim_{(c,d)\stackrel{(\gamma)}{\to} \infty}h^C(c,-d)\P_{X_t}(T_{A_n}>T_c\wedge T_{-d})=\varphi_{A_n}^{(\gamma)}(X_t)\ \text{in}\ L^1(\P_x),\\
\text{(IL)}&\ \lim_{c\to \pm\infty}h^B(c)\P_{X_t}(T_{A_n}>\eta_u^c)=\varphi_{A_n}^{(\pm 1)}(X_t)\ \text{in}\ L^1(\P_x).
\end{align}
\end{prop}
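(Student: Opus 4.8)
The plan is to prove Proposition \ref{dp2} by induction on $n$, following the proof of Proposition \ref{dp1} line by line and upgrading each almost sure convergence to a convergence in $L^1(\P_x)$. The base case $n=2$ is precisely Theorem \ref{3.2}. For the inductive step, fix one of the four clocks and write $\tau$ for it ($\tau=\bm{e}_q$, $T_c$, $T_c\wedge T_{-d}$, or $\eta_u^c$) and $\rho(\tau)$ for the corresponding normaliser ($r_q(0)$, $h^B(c)$, $h^C(c,-d)$, $h^B(c)$); likewise write $\varphi^{(\gamma)}$ with $\gamma=0$ for (Ex), $\gamma=\pm 1$ for (OH) and (IL), and the limiting ratio $\gamma$ itself for (TH). Since $T_{A_n}=T_{A_{n-1}}\wedge T_{a_n}$, the strong Markov property at $T_{a_n}$ — exactly the computation used in the proof of Proposition \ref{dp1} — gives
\begin{align*}
\rho(\tau)\P_{X_t}(T_{A_n}>\tau)=\rho(\tau)\P_{X_t}(T_{A_{n-1}}>\tau)-\bigl(\rho(\tau)\P_{a_n}(T_{A_{n-1}}>\tau)\bigr)\,\P_{X_t}(T_{a_n}<T_{A_{n-1}}\wedge\tau),
\end{align*}
in which the middle factor $\rho(\tau)\P_{a_n}(T_{A_{n-1}}>\tau)$ is a deterministic scalar.

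Now I would pass to the limit term by term. The first summand on the right converges to $\varphi_{A_{n-1}}^{(\gamma)}(X_t)$ in $L^1(\P_x)$ by the induction hypothesis. In the product, the scalar $\rho(\tau)\P_{a_n}(T_{A_{n-1}}>\tau)$ converges to $\varphi_{A_{n-1}}^{(\gamma)}(a_n)$ by Proposition \ref{dp1} (and hence stays bounded along the limit), while the random factor $\P_{X_t}(T_{a_n}<T_{A_{n-1}}\wedge\tau)$ is bounded by $1$ and converges a.s. to $\P_{X_t}(T_{a_n}<T_{A_{n-1}})$: the clock tends to $+\infty$ a.s. — for $\tau=\eta_u^c$ one uses $\eta_u^c\ge T_c$ — so the indicator of $\{T_{a_n}<T_{A_{n-1}}\wedge\tau\}$ converges a.s. to that of $\{T_{a_n}<T_{A_{n-1}}\}$, and by bounded convergence this factor also converges in $L^1(\P_x)$. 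A product of a bounded convergent scalar family and a uniformly bounded $L^1$-convergent family converges in $L^1$ (write $c_\tau W_\tau-cW=c_\tau(W_\tau-W)+(c_\tau-c)W$), so the product converges in $L^1(\P_x)$ to $\varphi_{A_{n-1}}^{(\gamma)}(a_n)\P_{X_t}(T_{a_n}<T_{A_{n-1}})$. Subtracting, $\rho(\tau)\P_{X_t}(T_{A_n}>\tau)$ converges in $L^1(\P_x)$ to $\varphi_{A_{n-1}}^{(\gamma)}(X_t)-\varphi_{A_{n-1}}^{(\gamma)}(a_n)\P_{X_t}(T_{a_n}<T_{A_{n-1}})$, which equals $\varphi_{A_n}^{(\gamma)}(X_t)$ by the recursion for $\varphi_{A_n}^{(\gamma)}$ established in this section. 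This closes the induction, and all four clocks are handled by the same argument, the only clock-dependent inputs being the shape of $\rho(\tau)$, the divergence $\tau\to\infty$ a.s., and Proposition \ref{dp1}.

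I expect the main (and only mildly delicate) obstacle to be the handling of the product term: one cannot apply plain dominated convergence to $\rho(\tau)\P_{X_t}(T_{A_n}>\tau)$ directly, since the normaliser $\rho(\tau)$ diverges, so the point of the induction is to split off the divergent part into a deterministic scalar — controlled by Proposition \ref{dp1} — leaving a genuinely bounded random factor to which ordinary dominated convergence applies. A streamlined alternative would be to dominate $\P_{X_t}(T_{A_n}>\tau)\le\P_{X_t}(T_{a_n}>\tau)$ and invoke the generalized (Pratt) dominated convergence theorem, combining the a.s.\ limit from Proposition \ref{dp1} with the one-point $L^1$ convergence used in the proof of Theorem \ref{3.2}; this works as well, but it requires a.s.\ convergence of the dominating family for each clock, so the inductive route above is the safer one to write out.
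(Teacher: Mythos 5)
Your proof is correct and follows essentially the same route as the paper: induction on $n$ with base case Theorem \ref{3.2}, the strong Markov decomposition at $T_{a_n}$, the induction hypothesis for the first term, and Proposition \ref{dp1} plus dominated convergence for the scalar--times--bounded-random-factor product (the paper organizes this as a three-term triangle-inequality bound, but the content is identical). Your explicit remarks on why $\P_{X_t}(T_{a_n}<T_{A_{n-1}}\wedge\tau)$ converges a.s.\ for each clock merely spell out what the paper leaves implicit.
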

\begin{proof}
First, we show the case of the exponential clock by the induction. The case $n=2$ is already shown in (\ref{3.2.1}). For general $n\ge 3$, we have by the strong Markov property
\begin{align*}
&|r_q(0)\P_{X_t}(T_{A_n}>\bm{e}_q)-\varphi_{A_n}^{(0)}(X_t)|\\
&=|r_q(0)\P_{X_t}(T_{A_{n-1}}>\bm{e}_q)-r_q(0)\P_{a_n}(T_{A_{n-1}}>\bm{e}_q)\P_{X_t}(T_{a_n}< T_{A_{n-1}}\wedge \bm{e}_q)\\
&\qquad -\varphi_{A_{n-1}}^{(0)}(X_t)+\P_{X_t}(T_{a_n}< T_{A_{n-1}})\varphi_{A_{n-1}}^{(0)}(a_n)|\\
&\le |r_q(0)\P_{X_t}(T_{A_{n-1}}>\bm{e}_q)-\varphi_{A_{n-1}}^{(0)}(X_t)|\\
&\qquad +|\P_{X_t}(T_{a_n}< T_{A_{n-1}})\varphi_{A_{n-1}}^{(0)}(a_n)-\P_{X_t}(T_{a_n}< T_{A_{n-1}}\wedge \bm{e}_q)\varphi_{A_{n-1}}^{(0)}(a_n)|\\
&\qquad +|\P_{X_t}(T_{a_n}< T_{A_{n-1}}\wedge \bm{e}_q)\varphi_{A_{n-1}}^{(0)}(a_n)-r_q(0)\P_{a_n}(T_{A_{n-1}}>\bm{e}_q)\P_{X_t}(T_{a_n}< T_{A_{n-1}}\wedge \bm{e}_q)|\\
&= |r_q(0)\P_{X_t}(T_{A_{n-1}}>\bm{e}_q)-\varphi_{A_{n-1}}^{(0)}(X_t)|\\
&\qquad +\varphi_{A_{n-1}}^{(0)}(a_n)|\P_{X_t}(T_{a_n}< T_{A_{n-1}})-\P_{X_t}(T_{a_n}< T_{A_{n-1}}\wedge \bm{e}_q)|\\
&\qquad +\P_{X_t}(T_{a_n}< T_{A_{n-1}}\wedge \bm{e}_q)|\varphi_{A_{n-1}}^{(0)}(a_n)-r_q(0)\P_{a_n}(T_{A_{n-1}}>\bm{e}_q)|.
\stepcounter{equation}\tag{\theequation}
\end{align*}
The first term converges to $0$ by the hypothesis of induction, the second term by the dominated convergence theorem, and the third term by Proposition \ref{dp1}, respectively. 

The rest of the proof is the same as the case of the exponential clock, so we omit it.
\end{proof}

We give the proof of the main theorem for conditioning to avoid $n$ points. 

\begin{proof}[The proof of Theorem \ref{t1.2}]
Note that by (\ref{0-27}), we have
\begin{align}
\lim_{|x|\to \infty}\frac{\varphi_{A_n}^{(0)}(x)}{x}=\lim_{|x|\to \infty}\left(\frac{h(x-a_n)}{x}-\sum_{k=1}^{n-1}\frac{\P_x(T_{a_k}<T_{A_n})h^{(\gamma)}(a_k-a_n)}{x}\right)=\frac{1}{m^2}.
\end{align}
Therefore, Theorem \ref{t1.2} holds by Theorems \ref{t2}, \ref{t3}, \ref{t4}, \ref{t5}, \ref{dp1}, and \ref{dp2}.
\end{proof}

From Theorem \ref{t1.2}, we obtain for any $k=1,...,n$,
\begin{align*}
\varphi_{A_n}^{(\gamma)}(x)&=h^{(\gamma)}(x-a_k)\mathbb{P}^{(\gamma)}_{x;a_k}(T_{A_n}=\infty)\\
&=h^{(\gamma)}(x-a_k)-\sum_{i=1}^{n} h^{(\gamma)}(a_i-a_k)\P_x(T_{a_i}= T_{A_n})\\
&=h^{(\gamma)}(x-a_k)-\P_x[h^{(\gamma)}(X_{T_{A_n}}-a_k)].
\stepcounter{equation}\tag{\theequation}
\end{align*}

We define
\begin{align}
\P_{x;A_n}^{(\gamma)}|_{\F_t}&:=1_{\{T_{A_n}>t\}}\frac{\varphi_{A_n}^{(\gamma)}(X_t)}{\varphi_{A_n}^{(\gamma)}(x)}\cdot \P_x|_{\F_t}
\end{align}
for $x\in \{x\in \R;\ \varphi_{A_n}^{(\gamma)}(x)> 0\}$. In the same way as (\ref{c3}), we obtain
\begin{align}
  \P_{x;A_n}^{(\gamma)}(T_{A_n}=\infty)=1.
\end{align}
The measure $\P_{x;A_n}^{(\gamma)}$ is absolutely continuous with respect to $\P_x$ on $\F_t$, but is singular to $\P_x$ on $\F_\infty$. We obtain by the same way as Subsection \ref{SS1},
\begin{align}
\P_{x;a_n}^{(\gamma)}(\cdot |\ T_{A_{n-1}}=\infty)=\P_{x;A_n}^{(\gamma)}(\cdot).
\end{align}

\subsection{The case of $x=a_n$}
We define the measure
\begin{align}
\label{d2-1}
\P_{a_n;A_n}^{(\gamma)}|_{\F_t}:=\varphi_{A_n}^{(\gamma)}(X_t)1_{\{T_{A_n}>t\}}\cdot n^{a_n}|_{\F_t},
\end{align}
where $n^{a_n}$ is the excursion measure away from $a_n$ for the process $(X,\P_x)$. We characterize this measure as follows:

\begin{thm}
Let $F_t$ be a bounded $\F_t$-measurable functional. Then, the following assertions hold:
\begin{align}
\text{(Ex)}&\ \lim_{q\to 0+}\P_{a_n}\left[F_t\circ k_{\bm{e}_q-g_{\bm{e}_q}^{a_n}}\circ\theta_{g_{\bm{e}_q}^{a_n}},\ T_{A_{n-1}}\circ \theta_{g_{\bm{e}_q}^{a_n}}>\bm{e}_q-g_{\bm{e}_q}^{a_n}\right]=\P_{a_n;A_n}^{(0)}[F_t],\\
\text{(OH)}&\ \lim_{c\to \pm \infty}\P_{a_n}\left[F_t\circ k_{T_c-g_{T_c}^{a_n}}\circ\theta_{g_{T_c}^{a_n}},\ T_{A_{n-1}}\circ \theta_{g_{T_c}^{a_n}}>T_c-g_{T_c}^{a_n}\right]=\P_{a_n;A_n}^{(\pm 1)}[F_t],\\
\text{(TH)}&\ \lim_{(c,d)\stackrel{(\gamma)}{\to}\infty}\P_{a_n}\left[F_t\circ k_{T_c\wedge T_{-d}-g_{T_c\wedge T_{-d}}^{a_n}}\circ\theta_{g_{T_c\wedge T_{-d}}^{a_n}},\ T_{A_{n-1}}\circ \theta_{g_{T_c\wedge T_{-d}}^{a_n}}>T_c\wedge T_{-d}-g_{T_c\wedge T_{-d}}^{a_n}\right]=\P_{a_n;A_n}^{(\gamma)}[F_t],
\end{align}
where $\theta$ is the shift operator, $k$ is the killing operator, and $g_s^{a}$ is the last hitting time of a point $a$ up to time $s$, i.e., for $t>0$ and \cadlag\ paths $\omega$,
\begin{align}
k_{t-g_t}\circ \theta_{g_t}\omega(s)=\begin{cases}
\omega(g_t+s)&\text{if}\ 0\le s<t-g_t,\\
\text{a cemetery state} &\text{if}\ s\ge t-g_t.
\end{cases}
\end{align}
\end{thm}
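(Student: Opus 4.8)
The strategy is to reduce the excursion-measure statement to the already-proved conditioning limits (Theorem \ref{3.1}, or rather its $n$-point analogue Proposition \ref{dp1}) via the It\^o excursion decomposition of $(X,\P_{a_n})$ at the last exit time $g_s^{a_n}$. The key classical fact is that, conditionally on the inverse local time $\eta^{a_n}$, the path $(X_{g_s^{a_n}+u})_{0\le u<s-g_s^{a_n}}$ is, up to normalisation, distributed as an excursion sampled from the It\^o measure $n^{a_n}$; more precisely, for a clock $\tau$ that is independent of the post-$g^{a_n}$ excursion given the local time at $a_n$, the measure
$$
\P_{a_n}\left[\,F\bigl(k_{\tau-g_\tau^{a_n}}\circ\theta_{g_\tau^{a_n}}\bigr)\,;\,\tau>g_\tau^{a_n}\,\right]
$$
equals $n^{a_n}\!\left[\,F\cdot\P_{\text{(endpoint)}}(\ell<\tau)\,\right]$ times a local-time intensity, where $\ell$ is the lifetime of the excursion. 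So the plan is: (i) write out this last-exit decomposition explicitly for each of the three clocks (Ex), (OH), (TH); (ii) recognise that the resulting weight is exactly $r_q(0)\,\P_{X_t}(T_{A_{n-1}}>\bm e_q)$ (resp.\ $h^B(c)\,\P_{X_t}(\cdots>T_c)$, resp.\ the two-point analogue), evaluated under $n^{a_n}$ against $F_t$ restricted to $\{T_{A_{n-1}}>t\}$; (iii) pass to the limit using Proposition \ref{dp1} together with a dominated-convergence argument under $n^{a_n}$.

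More concretely, I would first treat the exponential clock. By the lack-of-memory property, conditionally on $g_{\bm e_q}^{a_n}$ the overshoot $\bm e_q-g_{\bm e_q}^{a_n}$ is again exponential with parameter $q$, and the post-$g^{a_n}$ part of the path is an excursion away from $a_n$ killed at an independent $\bm e_q$. Using the normalisation $n^{a_n}(\ell>0)$ and the identity $\P_{a_n}[e^{-q\eta_l^{a_n}}]=e^{-l/r_q(0)}$ from the preliminaries, the master formula becomes, after integrating out the local time,
$$
\P_{a_n}\!\left[F_t\circ k_{\bm e_q-g^{a_n}}\circ\theta_{g^{a_n}},\,T_{A_{n-1}}\circ\theta_{g^{a_n}}>\bm e_q-g^{a_n}\right]
= n^{a_n}\!\left[F_t\,\mathbf 1_{\{T_{A_{n-1}}>t\}}\,r_q(0)\,\P_{X_t}(T_{A_{n-1}}>\bm e_q)\,e^{-qt}\right]+o(1),
$$
where the $o(1)$ accounts for the part of the excursion before time $t$ (handled as in the $M_t^q-N_t^q$ estimate of Theorem \ref{t2}). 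Now Proposition \ref{dp1}(Ex) gives $r_q(0)\P_{X_t}(T_{A_{n-1}}>\bm e_q)\to\varphi_{A_{n-1}}^{(0)}(X_t)=\varphi_{A_n}^{(0)}(X_t)$ pointwise, and dominated convergence under $n^{a_n}$ (the dominating function comes from the $L^1(\P_x)$-bound of Proposition \ref{dp2} transported to $n^{a_n}$ via the entrance law) yields the right-hand side $\P_{a_n;A_n}^{(0)}[F_t]$ by the very definition \eqref{d2-1}. The cases (OH) and (TH) are formally identical once one replaces $\bm e_q$ by $T_c$ (resp.\ $T_c\wedge T_{-d}$): the relevant independence is that, started from $a_n$, the excursion straddling the exit time and the exit time itself decouple given the local time clock, and the normalising weight $h^B(c)$ (resp.\ $h^C(c,-d)$) is precisely the one appearing in Theorems \ref{t3}, \ref{t4}; the convergence of the weight is Proposition \ref{dp1}(OH), (TH).

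The main obstacle, and the place where care is genuinely required, is step (iii): justifying the interchange of limit and integration against the infinite measure $n^{a_n}$. Unlike in Theorems \ref{t2}--\ref{t5}, where the ambient measure is the probability measure $\P_x$ and one has a martingale giving automatic uniform integrability, here $n^{a_n}$ is $\sigma$-finite with $n^{a_n}(\ell>0)=\infty$, so one must produce an explicit $n^{a_n}$-integrable dominating function for the family $\{r_q(0)\P_{X_t}(T_{A_{n-1}}>\bm e_q)\mathbf 1_{\{T_{A_{n-1}}>t\}}\}_q$. The natural candidate is obtained by bounding $r_q(0)\P_y(T_{A_{n-1}}>\bm e_q)\le C(1+|y|)$ uniformly in small $q$ (which follows from the monotonicity of $r_q$ in $q$, from $h_q\uparrow h$, and from Proposition \ref{0-2.1}(iii)), and then checking $n^{a_n}[(1+|X_t|)\mathbf 1_{\{\ell>t\}}]<\infty$, which reduces to an entrance-law computation using the resolvent density $r_q$ and assumption \textbf{(A)}. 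A secondary technical point is that $a_n$ need not lie in $\mathscr H_{A_{n-1}}^{(\gamma)}$ in the degenerate cases, but then $\varphi_{A_n}^{(\gamma)}(a_n)$ and both sides vanish, so the statement is non-vacuous only in the generic case and there $\varphi_{A_{n-1}}^{(\gamma)}(a_n)>0$ keeps all normalisations finite. Once the domination is in place, the three limits follow uniformly by the same three-line argument, and I would present (Ex) in full and remark that (OH), (TH) are identical with $\bm e_q\rightsquigarrow T_c,\ T_c\wedge T_{-d}$.
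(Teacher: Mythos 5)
Your overall strategy coincides with the paper's: decompose at the last exit from $a_n$, apply the compensation formula to convert the expectation under $\P_{a_n}$ into an integral against the excursion measure $n^{a_n}$, and pass to the limit via Proposition \ref{dp1} and dominated convergence. However, there is a concrete error in your identification of the weight produced by the master formula. The excursion straddling the clock must, by construction, not return to $a_n$ before the clock rings (otherwise $g^{a_n}_{\bm e_q}$ would be later), so after conditioning on $\F_t$ under $n^{a_n}$ the correct weight is
\begin{align*}
r_q(0)\,e^{-qt}\,\P_{X_t}\bigl(\bm e_q<T_{a_n}\wedge T_{A_{n-1}}\bigr)=r_q(0)\,e^{-qt}\,\P_{X_t}\bigl(\bm e_q<T_{A_n}\bigr),
\end{align*}
i.e.\ the point $a_n$ itself must also be avoided after time $t$; this is exactly how the paper arrives at $r_q(0)e^{-qt}n^{a_n}[F_t\cdot\P_{X_t}(\bm e_q<T_{A_n}),\ t<T_{A_n}]$ and hence at the limit $n^{a_n}[F_t\,\varphi_{A_n}^{(0)}(X_t),\ T_{A_n}>t]=\P_{a_n;A_n}^{(0)}[F_t]$. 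Your weight $r_q(0)\P_{X_t}(T_{A_{n-1}}>\bm e_q)$ overcounts paths that revisit $a_n$ between $t$ and $\bm e_q$, and it converges to $\varphi_{A_{n-1}}^{(0)}(X_t)$, not to $\varphi_{A_n}^{(0)}(X_t)$. The identity $\varphi_{A_{n-1}}^{(0)}=\varphi_{A_n}^{(0)}$ that you invoke to reconcile this is false in general: by the recursion $\varphi_{A_n}^{(\gamma)}(x)=\varphi_{A_{n-1}}^{(\gamma)}(x)-\varphi_{A_{n-1}}^{(\gamma)}(a_n)\P_x(T_{a_n}<T_{A_{n-1}})$ the two functions agree only when $\varphi_{A_{n-1}}^{(\gamma)}(a_n)=0$; already for Brownian motion with $a_1<a_2<a_3$ one has $\varphi_{\{a_1,a_2\}}^{(\gamma)}(a_3)=(1+\gamma)(a_3-a_2)>0$ for $\gamma>-1$. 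So as written your argument lands on the wrong limit measure, even though the fix is local.

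The remaining ingredients are sound and essentially match the paper, with two cosmetic differences. The master-formula step is an exact identity, $r_q(0)\,n^{a_n}[F_t,\ t<\bm e_q<T_{A_n}]$, with no $o(1)$ remainder (the contribution of $\bm e_q-g^{a_n}_{\bm e_q}\le t$ is absorbed into the convention for $F_t$ on killed paths), so your appeal to an $M_t^q-N_t^q$-type estimate is unnecessary. For the domination under the $\sigma$-finite measure $n^{a_n}$, the paper simply bounds $r_q(0)\P_{X_t}(\bm e_q<T_{A_n})\le r_q(0)\P_{X_t}(\bm e_q<T_{a_1})$ and quotes the $n^{a_n}$-integrability of the latter from the proof of Theorem 1.3 of Takeda \cite{Takeda}; your linear-growth bound $r_q(0)\P_y(T_{a_1}>\bm e_q)=h_q(y-a_1)\le h^B(y-a_1)\le C(1+|y|)$ together with $n^{a_n}[(1+|X_t|),\ \zeta>t]<\infty$ is an acceptable alternative but ultimately requires the same external input.
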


\begin{proof}
For $s>0,$ we define $d_s^{a_n}:=\inf\{u>s;\ X_u=a_n\}$ and $G^{a_n}:=\{g_s^{a_n};\ g_s^{a_n}\neq d_s^{a_n},\ s>0\}$. First, we show the case of the exponential clock. For any $q>0$, we have
\begin{align*}
\P_{a_n}&\left[F_t\circ k_{\bm{e}_q-g_{\bm{e}_q}^{a_n}}\circ\theta_{g_{\bm{e}_q}^{a_n}},\ T_{A_{n-1}}\circ \theta_{g_{\bm{e}_q}^{a_n}}>\bm{e}_q-g_{\bm{e}_q}^{a_n}\right]\\
&=\P_{a_n}\left[\int_0^\infty qe^{-qu}F_t\circ k_{u-g_{u}^{a_n}}\circ\theta_{g_{u}^{a_n}}1_{\{T_{A_{n-1}}\circ \theta_{g_{u}^{a_n}}>u-g_{u}^{a_n}\}}du\right]\\
&=\P_{a_n}\left[\sum_{s\in G^{a_n}}\int_s^{d_s^{a_n}}qe^{-qu}F_t\circ k_{u-s}\circ\theta_{s}1_{\{T_{A_{n-1}}\circ \theta_s>u-s\}}du\right].
\stepcounter{equation}\tag{\theequation}
\end{align*}
By the compensation formula (see, e.g., Corollary IV.11 of \cite{Ber}), we have
\begin{align*}
\P_{a_n}&\left[\sum_{s\in G^{a_n}}\int_s^{d_s^{a_n}}qe^{-qu}F_t\circ k_{u-s}\circ\theta_{s}1_{\{T_{A_{n-1}}\circ \theta_s>u-s\}}du\right]\\
&=\P_{a_n}\left[\int_0^\infty e^{-qs}dL_s^{a_n}\right]n^{a_n}\left[\int_0^{T_{a_n}}qe^{-qu}F_t1_{\{u>t\}}1_{\{T_{A_{n-1}}>u\}}du\right]\\
&=r_q(0)n^{a_n}[F_t,\ t<\bm{e}_q<T_{A_n}]\\
&=r_q(0)e^{-qt}n^{a_n}\left[F_t\cdot \P_{X_t}(\bm{e}_q<T_{A_n}),\ t<T_{A_n}\right].
\stepcounter{equation}\tag{\theequation}
\end{align*}
Since 
\begin{align}
r_q(0)\P_{X_t}(\bm{e}_q<T_{A_n})\le r_q(0)\P_{X_t}(\bm{e}_q<T_{a_1}),
\end{align}
which is integrable by the proof of Theorem 1.3 of Takeda \cite{Takeda}, we have
\begin{align*}
\lim_{q\to 0+}&\P_{a_n}\left[F_t\circ k_{\bm{e}_q-g_{\bm{e}_q}^{a_n}}\circ\theta_{g_{\bm{e}_q}^{a_n}},\ T_{A_{n-1}}\circ \theta_{g_{\bm{e}_q}^{a_n}}>\bm{e}_q-g_{\bm{e}_q}^{a_n}\right]\\
&=\lim_{q\to 0+}r_q(0)e^{-qt}n^{a_n}\left[F_t\cdot \P_{X_t}(\bm{e}_q<T_{A_n}),\ t<T_{A_n}\right]\\
&=n^{a_n}\left[F_t\cdot \varphi_{A_n}^{(\gamma)}(X_t),\ T_{A_n}>t\right]=\P_{a_n;A_n}^{(0)}[F_t],
\stepcounter{equation}\tag{\theequation}
\end{align*}
by the dominated convergence theorem and Theorem \ref{dp1}. 

We omit the rest of proof, since it is similar to that of Theorem 1.3 of Takeda \cite{Takeda}.
\end{proof}


\section{Conditioning to avoid bounded $F_\sigma$-sets}
\label{S5}
Let $A$ be a $F_\sigma$-set. Note that the first hitting time $T_A$ is a stopping time. We denote $(X_t^A)_{t\ge 0}$ by the process which killed by hitting $A$, and denote $p_t^A$ by its transition density, i.e., for $x,y\in \R$, we have
\begin{align}
p_t^A(x,y)=p_t(y-x)-\P_x[p_{t-T_A}(y-X_{T_A});\ t>T_A],
\end{align}
where $p_t$ is the transition density of $((X_t),\P_0)$. For $q>0$, we denote $r_q^A$ by $q$-resolvent density of the killed process $(X_t^A)$, i.e.,
\begin{align}
r_q^A(x,y):=\int_0^\infty e^{-qt}p_t^A(x,y)dt.
\end{align}
Note that $r_q^A(x,y)$ is continuous in $y$, since we have by Fubini's theorem,
\begin{align*}
r_q^A(x,y)&=\int_0^\infty e^{-qt}p_t(y-x)-\int_0^\infty e^{-qt}\P_x[p_{t-T_A}(y-X_{T_A});\ t>T_A]dt\\
&=r_q(y-x)-\P_x\left[e^{-qT_A}r_q(y-X_{T_A})\right].
\end{align*}

\begin{lem}
\label{el1}
Let $A$ is a bounded $F_\sigma$-set which contains $0$. For any $q>0$ and $x\in \R$, it holds that $r_q^A(x,0)=0.$
\end{lem}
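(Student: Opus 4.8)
The plan is to use the formula for $r_q^A(x,0)$ that was just derived, namely
\[
r_q^A(x,0)=r_q(-x)-\P_x\left[e^{-qT_A}r_q(-X_{T_A})\right],
\]
and to show the two terms cancel. Since $0\in A$, on the event $\{X_{T_A}=0\}$ the contribution is $\P_x[e^{-qT_A}r_q(0);\,X_{T_A}=0]$, so the natural idea is to decompose according to the value of $X_{T_A}$ and invoke the identity (\ref{b2}) relating $\P_x[e^{-qT_a}]$ to $r_q(a-x)/r_q(0)$. The key point is that hitting $A$ and then continuing must eventually hit the point $0$ (again), so I would condition on $X_{T_A}$ and apply the strong Markov property at $T_A$ together with (\ref{b2}) applied from the point $X_{T_A}\in A$ to the point $0$.

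Concretely, first I would write, using the strong Markov property at $T_A$ and the fact that every point is regular for itself (so $0$ is hit from any starting point of the recurrent process, in particular $\P_y(T_0<\infty)=1$ and more precisely $\P_y[e^{-qT_0}]=r_q(-y)/r_q(0)$ by (\ref{b2})),
\[
\P_x\left[e^{-qT_0}\right]=\P_x\left[e^{-qT_A}\,\P_{X_{T_A}}\!\left[e^{-qT_0}\right]\right]=\frac{1}{r_q(0)}\P_x\left[e^{-qT_A}\,r_q(-X_{T_A})\right],
\]
where the first equality holds because $T_A\le T_0$ a.s. (as $0\in A$) and $T_0=T_A+T_0\circ\theta_{T_A}$ on $\{T_A<T_0\}$, while on $\{T_A=T_0\}$, i.e. $\{X_{T_A}=0\}$, both sides agree trivially. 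Combining this with (\ref{b2}) for the point $0$, i.e. $\P_x[e^{-qT_0}]=r_q(-x)/r_q(0)$, gives
\[
\P_x\left[e^{-qT_A}\,r_q(-X_{T_A})\right]=r_q(-x),
\]
and substituting into the displayed formula for $r_q^A(x,0)$ yields $r_q^A(x,0)=0$.

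The main obstacle is justifying the identity $\P_x[e^{-qT_0}]=\P_x[e^{-qT_A}\,\P_{X_{T_A}}[e^{-qT_0}]]$ rigorously: one needs that $T_A$ is a stopping time (true since $A$ is $F_\sigma$, as noted), that $T_A\le T_0$ with equality exactly when $X_{T_A}=0$, and that the strong Markov property applies at $T_A$ with the post-$T_A$ process started from the (random, possibly not deterministic) point $X_{T_A}$. One must also ensure $r_q(-X_{T_A})$ is a bona fide measurable function of $X_{T_A}$ on $\{T_A<\infty\}$, which holds since $r_q$ is bounded and continuous under assumption \textbf{(A)}, and that $T_A<\infty$ $\P_x$-a.s., which follows from recurrence together with $0\in A$. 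Once these measure-theoretic points are in place, the computation is immediate.
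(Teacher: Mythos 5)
Your argument is correct and reaches the conclusion by a genuinely different route from the paper. You apply the strong Markov property at $T_A$ together with the additive decomposition $T_0=T_A+T_0\circ\theta_{T_A}$ (valid because $0\in A$ forces $T_A\le T_0$, so no zero of $X$ occurs before $T_A$) and the hitting-time formula (\ref{b2}), obtaining the exact identity $\P_x[e^{-qT_A}r_q(-X_{T_A})]=r_q(-x)$, which annihilates $r_q^A(x,0)$ immediately. The paper instead proves only the one-sided inequality $\P_x[r_q(-X_{T_A})e^{-qT_A}]\ge\P_x[r_q(-X_{T_0})e^{-qT_0}]=r_q(-x)$: it approximates the $q$-excessive function $y\mapsto r_q(-y)$ from below by potentials $R_qf_n$, uses $\P_x[R_qf_n(X_T)e^{-qT}]=\P_x[\int_T^\infty e^{-qt}f_n(X_t)dt]$ together with the monotonicity of $T\mapsto\int_T^\infty$ under $T_A\le T_0$, and then closes the argument by combining this inequality with the nonnegativity $r_q^A(x,0)\ge 0$ of the killed resolvent density. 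Your approach is more direct and yields a stronger conclusion (an equality rather than an inequality), at the cost of having to justify the time decomposition at $T_A$ and the attainment of the d\'ebut of $\{0\}$ (i.e.\ $X_{T_0}=0$ a.s.\ on $\{T_0<\infty\}$, so that $T_0\circ\theta_{T_A}=T_0-T_A$); these points are standard for a recurrent L\'evy process with regular points under \textbf{(A)}, and you correctly flag them. The paper's excessive-function argument sidesteps the additivity of hitting times entirely, needing only the inequality $T_A\le T_0$, but must invoke the separate input that $r_q^A(x,\cdot)$ is nonnegative and continuous at $0$.
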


\begin{proof}
We define the $q$-resolvent operator $R_q$ by
\begin{align}
R^q f(x):=\P_x\left[\int_0^\infty e^{-qt}f(X_t)dt\right]
\end{align}
for a non-negative bounded Borel function $f.$ Then, we have for a stopping time $T$,
\begin{align}
\P_x[R_qf(X_T)\cdot e^{-qT}]=\P_x\left[\int_T^\infty e^{-qt}f(X_t)dt\right].
\end{align}
Indeed, by the strong Markov property, we have
\begin{align*}
\P_x[R_qf(X_T)\cdot e^{-qT}]&=\P_x\left[e^{-qT}\P_{X_T}\left[\int_0^\infty e^{-qt}f(X_{t})dt\right]\right]\\
&=\P_x\left[\int_0^\infty e^{-q(T+t)}f(X_{T+t})dt\right]\\
&=\P_x\left[\int_T^\infty e^{-qt}f(X_t)dt\right].
\stepcounter{equation}\tag{\theequation}
\end{align*}
Since $x\mapsto r_q(-x)$ is a $q$-excessive function, there exists a sequence of non-negative bounded Borel functions $(f_n)$ such that $R_qf_n(x)\nearrow  r_q(-x)$ for $x\in \R$ as $n\to \infty$ (see, e.g., Proposition 41.5 and Theorem 41.16 of \cite{Sato}). Since $T_A\le T_0$, we have
\begin{align*}
\P_x[R_qf_n(X_{T_A})\cdot e^{-q{T_A}}]&=\P_x\left[\int_{T_A}^\infty e^{-qt}f_n(X_t)dt\right]\\
&\ge \P_x\left[\int_{T_0}^\infty e^{-qt}f_n(X_t)dt\right]=\P_x[R_qf_n(X_{T_0})\cdot e^{-q{T_0}}].
\stepcounter{equation}\tag{\theequation}
\end{align*}
Thus, by the monotone convergence theorem, we have
\begin{align}
\P_x[r_q(-X_{T_A}) e^{-q{T_A}}]\ge \P_x[r_q(-X_{T_0}) e^{-q{T_0}}].
\end{align}
Therefore, we obtain
\begin{align*}
0\le r_q^A(x,0)&=r_q(-x)-\P_x[r_q(-X_{T_A})e^{-qT_A}]\\
&\le r_q(-x)-\P_x[r_q(-X_{T_0}) e^{-q{T_0}}]\\
&=r_q(-x)-r_q(0)\cdot \frac{r_q(-x)}{r_q(0)}=0.
\stepcounter{equation}\tag{\theequation}
\end{align*}
The proof is complete.
\end{proof}

The proof of the following proposition is inspired by Proposition 3.7 of Grzywny-Ryznar \cite{GR}.
\begin{prop}
\label{ep1}
Let $A$ is a bounded $F_\sigma$-set which contains $0$. For $x\in\R$, it holds that
\begin{align}
\lim_{q\to 0+}r_q(0)\P_x(T_A>\bm{e}_q)=h(x)-\P_x[h(X_{T_A})]=:\varphi_{A}(x).
\end{align}
\end{prop}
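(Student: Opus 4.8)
The plan is to start from the identity $r_q^A(x,0) = r_q(-x) - \P_x[e^{-qT_A}r_q(-X_{T_A})] = 0$ from Lemma \ref{el1}, and rewrite $r_q(0)\P_x(T_A > \bm{e}_q)$ in a form where the $q\to 0+$ limit can be taken term by term. Writing $\P_x(T_A > \bm{e}_q) = \P_x[e^{-qT_A}]$ is not directly what we want; instead I would use $\P_x(T_A > \bm{e}_q) = 1 - \P_x(\bm{e}_q \ge T_A) = 1 - \P_x[e^{-qT_A}]$ and hence
\begin{align*}
r_q(0)\P_x(T_A > \bm{e}_q) = r_q(0)\bigl(1 - \P_x[e^{-qT_A}]\bigr) = r_q(0) - \P_x[e^{-qT_A}]r_q(0).
\end{align*}
Now the key move: by Lemma \ref{el1}, $r_q(-x) = \P_x[e^{-qT_A}r_q(-X_{T_A})]$, so I can insert this to pair things up. Recalling $h_q(x) = r_q(0) - r_q(-x)$, I would write
\begin{align*}
r_q(0)\P_x(T_A > \bm{e}_q) = r_q(0) - \P_x[e^{-qT_A}]r_q(0) = h_q(x) + r_q(-x) - \P_x[e^{-qT_A}]r_q(0),
\end{align*}
and then substitute $r_q(-x) = \P_x[e^{-qT_A}r_q(-X_{T_A})]$ to obtain
\begin{align*}
r_q(0)\P_x(T_A > \bm{e}_q) = h_q(x) - \P_x\bigl[e^{-qT_A}\bigl(r_q(0) - r_q(-X_{T_A})\bigr)\bigr] = h_q(x) - \P_x\bigl[e^{-qT_A}h_q(X_{T_A})\bigr].
\end{align*}

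From here the first term converges: $h_q(x) \to h(x)$ as $q\to 0+$ by Proposition \ref{0-2.1}. For the second term I would like to pass to the limit inside the expectation to get $\P_x[h(X_{T_A})]$, using that $T_A < \infty$ a.s. by recurrence (so $e^{-qT_A}\to 1$ a.s.), that $h_q(X_{T_A}) \to h(X_{T_A})$ pointwise, and a dominated-convergence argument. The domination is the main obstacle: I need a $q$-uniform integrable bound on $e^{-qT_A}h_q(X_{T_A})$. Here is where the boundedness of $A$ enters crucially — since $A$ is bounded, $X_{T_A}$ takes values in the closure of $A$, a bounded set; since $h_q \nearrow h$ as $q\to 0+$ (because $r_q(-x)$ is nondecreasing as $q\to 0+$, hence $h_q = r_q(0) - r_q(-x)$... actually one must be slightly careful, as $r_q(0)$ also increases), I would instead bound $h_q(X_{T_A})$ by $\sup_{y \in \overline{A}} h_q(y)$ and argue this stays bounded as $q\to 0+$. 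Concretely, $h_q$ is subadditive-like and continuous; on the compact set $\overline{A}$ one has $\sup_{y\in\overline A} h_q(y) \le \sup_{y\in\overline A}|r_q(0) - r_q(-y)|$, and since $h_q \to h$ uniformly on compacts (this uniform convergence on compacts should follow from continuity of the limit $h$ together with monotonicity-type control, cf. the arguments around Proposition \ref{0-2.1} and \cite{Tukada}), this supremum is bounded by a constant for small $q$. That constant dominates $e^{-qT_A}h_q(X_{T_A})$ uniformly in $q$, and the dominated convergence theorem applies.

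Thus the plan has three steps: (1) algebraic manipulation using Lemma \ref{el1} and the definition of $h_q$ to reach $r_q(0)\P_x(T_A>\bm{e}_q) = h_q(x) - \P_x[e^{-qT_A}h_q(X_{T_A})]$; (2) take $q\to 0+$ in the first term via Proposition \ref{0-2.1}; (3) take $q\to 0+$ in the second term via dominated convergence, with the dominating function supplied by the uniform boundedness of $h_q$ on the compact set $\overline{A}$ for small $q$ together with $T_A<\infty$ a.s. by recurrence. The conclusion is $\lim_{q\to 0+} r_q(0)\P_x(T_A>\bm{e}_q) = h(x) - \P_x[h(X_{T_A})] = \varphi_A(x)$. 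The only genuinely delicate point is justifying the uniform-on-compacts bound for $h_q$; if the clean monotonicity is unavailable I would fall back on the $L^1$-convergence $h_q(X_t)\to h(X_t)$ cited from \cite{Tukada} combined with the Markov property at $T_A$ to handle the limit, though the pointwise/dominated route is cleaner when $\overline A$ is compact.
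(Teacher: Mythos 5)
Your proposal is correct and follows essentially the same route as the paper: the same algebraic rearrangement via Lemma \ref{el1} (i.e.\ $r_q^A(x,0)=0$) yielding $r_q(0)\P_x(T_A>\bm{e}_q)=h_q(x)-\P_x[h_q(X_{T_A})e^{-qT_A}]$, followed by dominated convergence. The one delicate point you flag — a $q$-uniform bound on $h_q(X_{T_A})$ — is settled in the paper not by uniform convergence on compacts (which, as you note, is not obviously available since $h_q$ is not monotone in $q$) but by the standard bound $0\le h_q\le h^B$ with $h^B=h(\cdot)+h(-\cdot)$ continuous, hence bounded on the bounded set $\overline{A}$.
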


\begin{proof}
By Fubini's theorem and Lemma \ref{el1}, we have
\begin{align*}
\label{e1}
r_q(0)\P_x(T_A>\bm{e}_q)&=r_q(0)(1-\P_x[e^{-qT_A}])\\
&=r_q(0)-r_q(-x)-\P_x[r_q(0)e^{-qT_A}-r_q(-X_{T_A})e^{-qT_A}]\\
&\qquad +r_q(-x)-\P_x[r_q(-X_{T_A})e^{-qT_A}]\\
&=h_q(x)-\P_x[h_q(X_{T_A})e^{-qT_A}]+r_q^{A}(x,0)\\
&=h_q(x)-\P_x[h_q(X_{T_A})e^{-qT_A}].
\stepcounter{equation}\tag{\theequation}
\end{align*}
Since $A$ is a bounded set and $0\le h_q(x)\le h^B(x)$ which is continuous on $\R$ (see, e.g., (6.19) of \cite{Takeda}), we obtain by the dominated convergence theorem,
\begin{align}
r_q(0)\P_x(T_A>\bm{e}_q)\to h(x)-\P_x[h(X_{T_A})]
\end{align}
as $q\to 0+$. 
\end{proof}

Next, we prove of the $L^1(\P_x)$-convergence.

\begin{prop}
\label{ep2}
Let $A$ is a bounded set which contains $0$. For $t>0$, it holds that
\begin{align}
\label{e2}
\lim_{q\to 0+}r_q(0)\P_{X_t}(T_A>\bm{e}_q)=\varphi_A(X_t)\ \text{in}\ L^1(\P_x).
\end{align}
\end{prop}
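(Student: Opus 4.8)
The plan is to reduce the statement, via the identity already isolated in the proof of Proposition \ref{ep1}, to two separate convergences and then dispatch each by a dominated convergence argument. Since that identity (together with Lemma \ref{el1}) holds for every starting point, evaluating it at $y=X_t$ gives
\begin{align}
r_q(0)\P_{X_t}(T_A>\bm{e}_q)=h_q(X_t)-\P_{X_t}\bigl[h_q(X_{T_A})e^{-qT_A}\bigr],
\end{align}
where $\P_{X_t}[\,\cdot\,]$ denotes $\P_y[\,\cdot\,]$ evaluated at $y=X_t$. Subtracting $\varphi_A(X_t)=h(X_t)-\P_{X_t}[h(X_{T_A})]$ and applying the triangle inequality, it suffices to prove that $h_q(X_t)\to h(X_t)$ and $\P_{X_t}[h_q(X_{T_A})e^{-qT_A}]\to\P_{X_t}[h(X_{T_A})]$, both in $L^1(\P_x)$ as $q\to0+$.

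The first convergence is exactly Theorem 15.2 of \cite{Tukada}, which was already invoked in the two-point case.

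For the second, I would first prove the pointwise statement: for each fixed $y\in\R$, $\P_y[h_q(X_{T_A})e^{-qT_A}]\to\P_y[h(X_{T_A})]$. Since $((X_t),\P_0)$ is recurrent and $0\in A$, we have $T_A\le T_0<\infty$ a.s., so $e^{-qT_A}\to1$ and, by Proposition \ref{0-2.1}(1), $h_q(X_{T_A})\to h(X_{T_A})$ a.s.; moreover $X_{T_A}\in\overline A$, which is compact because $A$ is bounded, so $0\le h_q(X_{T_A})e^{-qT_A}\le h^B(X_{T_A})\le C:=\sup_{z\in\overline A}h^B(z)<\infty$, using the bound $h_q\le h^B$ and the continuity of $h^B$. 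Dominated convergence then gives the pointwise limit. Finally, the same constant $C$ dominates $y\mapsto\P_y[h_q(X_{T_A})e^{-qT_A}]$ uniformly in $q$, so a second application of the dominated convergence theorem, this time under $\P_x$ over the law of $X_t$, upgrades the pointwise convergence to convergence in $L^1(\P_x)$. Combining the two pieces yields (\ref{e2}).

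The step needing the most care, although elementary, is the uniform domination $h_q(X_{T_A})e^{-qT_A}\le C$: it relies on $A$ being bounded (hence $\overline A$ compact), on $h^B$ being finite and continuous, on the inequality $h_q\le h^B$, and on the a.s.\ finiteness of $T_A$ (so that $X_{T_A}$ is well defined and $e^{-qT_A}\to1$). Everything else is a routine double use of dominated convergence.
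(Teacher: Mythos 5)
Your proposal is correct and follows essentially the same route as the paper: it starts from the identity $r_q(0)\P_{X_t}(T_A>\bm{e}_q)=h_q(X_t)-\P_{X_t}[h_q(X_{T_A})e^{-qT_A}]$ established in the proof of Proposition \ref{ep1}, handles the first term by Theorem 15.2 of Tsukada, and handles the second by dominated convergence using the bound $h_q\le h^B$ together with the boundedness of $A$ (so $X_{T_A}\in\overline{A}$). If anything, your write-up is more careful than the paper's, which silently drops the factor $e^{-qT_A}$ in its triangle inequality and asserts a constant bound $4M$ that does not cover the unbounded term $|h(X_t)-h_q(X_t)|$; your explicit appeal to Tsukada's $L^1$ result for that term and your two-step dominated convergence for the other close those gaps.
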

\begin{proof}
Since $h^B$ is continuous on $\R$ and $A$ is a bounded set, there exists $M>0$ such that $0\le h^B(X_{T_A})\le M$. Note that $0\le h_q(x)\le h^B(x)$ (see, e.g., (6.9) of \cite{Takeda}). By (\ref{e1}), we have
\begin{align*}
|h(X_t)&-\P_{X_t}[h(X_{T_A})]-r_q(0)\P_{X_t}(T_A>\bm{e}_q)|\\
&=|h(X_t)-\P_{X_t}[h(X_{T_A})]-h_q(X_t)+\P_{X_t}[h_q(X_{T_A})e^{-qT_A}]|\\
&\le |h(X_t)-h_q(X_t)|+\P_{X_t}[|h(X_{T_A})-h_q(X_{T_A})|]\le 4M.
\stepcounter{equation}\tag{\theequation}
\end{align*}
Therefore, by the dominated convergence theorem, we obtain the assertion  (\ref{e2}).
\end{proof}

We give the proof of the main theorem for conditioning to avoid a bounded $F_\sigma$-set. 

\begin{proof}[The proof of Theorem \ref{t1.3}]
It holds by Theorems \ref{t2}, \ref{ep1}, and \ref{ep2}.
\end{proof}

We define 
\begin{align}
\P_{x;A}|_{\F_t}:=\frac{\varphi_A(X_t)}{\varphi_A(x)}1_{\{T_{A}>t\}}\cdot \P_x|_{\F_t}
\end{align}
for $x\in \{x\in \R;\ \varphi_A(x)>0\}$. In the same way as (\ref{c3}), we obtain
\begin{align}
\P_{x;A}(T_{A}=\infty)=1.
\end{align}
Similarly to Subsection \ref{SS1}, we obtain the following theorem:

\begin{thm}
It holds that
\begin{align}
\P_{x;A}=\P_{x;0}^{(0)}(\cdot|\ T_A=\infty).
\end{align}

\end{thm}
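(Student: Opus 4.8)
The statement to prove is
\[
\P_{x;A}=\P_{x;0}^{(0)}(\cdot\mid T_A=\infty),
\]
and the plan is to mimic the argument of Subsection \ref{SS1} verbatim, replacing the two-point set $\{a,b\}$ there by the pair consisting of the general bounded $F_\sigma$-set $A$ (which contains $0$) and the single point $\{0\}$. The key structural facts we need are: (i) $\varphi_A(x)=h(x)-\P_x[h(X_{T_A})]$ is the renormalized survival function for the exponential clock, proved in Proposition \ref{ep1}; (ii) for the one-point case $h^{(0)}(x)=h(x)$ plays the analogous role for $T_0$ (this is the $\gamma=0$, $a=0$ instance of the theory of Takeda--Yano, as used throughout); and (iii) $0\in A$ forces $T_A\le T_0$, hence $\mathscr{H}_A\subset\mathscr{H}_0^{(0)}=\{x:h(x)>0\}$, so the right-hand side makes sense wherever the left-hand side does.

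First I would record that $((X_t),\P_{x;A})$ is transient: exactly as in Proposition \ref{c6}, $(1/\varphi_A(X_t))_{t\ge0}$ is a non-negative $\P_{x;A}$-supermartingale, its limit exists a.s., and by Fatou plus recurrence of $((X_t),\P_x)$ together with $\P_{x;A}(T_A>t)=1$ one gets $\lim_{t\to\infty}1/\varphi_A(X_t)=0$, i.e.\ $|X_t|\to\infty$, $\P_{x;A}$-a.s.; here one uses $\lim_{|x|\to\infty}\varphi_A(x)/|x|=1/m^2$ when $m^2<\infty$ (from \eqref{0-27} and boundedness of $A$), and a separate easy argument, or the singularity already established after Theorem \ref{t1.3}, when $m^2=\infty$. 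Since $A$ is bounded, transience gives $T_A<\infty$ only finitely often... more precisely $X_t$ eventually leaves every compact set, so in particular
\[
\lim_{t\to\infty}\frac{\varphi_A(X_t)}{h(X_t)}=\lim_{t\to\infty}\frac{h(X_t)-\P_{X_t}[h(X_{T_A})]}{h(X_t)}=1,\qquad \P_{x;A}\text{-a.s.},
\]
because $\P_{X_t}[h(X_{T_A})]$ stays bounded (by $\sup_{y\in A}h^B$, as in Proposition \ref{ep1}) while $h(X_t)\to\infty$.

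Next I would introduce the bounded measures on $\F_\infty$, exactly as in Subsection \ref{SS1}:
\[
\mathscr{P}_{x;0}^{(0)}:=h(x)\cdot\P_{x;0}^{(0)},\qquad
\mathscr{P}_{x;A}:=\varphi_A(x)\cdot\P_{x;A},
\]
and show $\mathscr{P}_{x;A}=1_{\{T_A=\infty\}}\cdot\mathscr{P}_{x;0}^{(0)}$ by applying Theorem 4.1 of \cite{Yano} with $\Gamma_t=1_{\{T_A>t\}}$ and $\mathscr{E}_t=1_{\{T_0>t\}}$ (note $\{T_A>t\}\subset\{T_0>t\}$ since $0\in A$), using the ratio limit just established as the required hypothesis. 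Unwinding the normalizations gives $\varphi_A(x)=h(x)\,\P_{x;0}^{(0)}(T_A=\infty)$ and then
\[
\P_{x;A}=\frac{h(x)}{\varphi_A(x)}1_{\{T_A=\infty\}}\cdot\P_{x;0}^{(0)}=\P_{x;0}^{(0)}(\cdot\mid T_A=\infty),
\]
which is the claim. The one step that requires genuine care rather than bookkeeping is the transience of $\P_{x;A}$ in the degenerate case $m^2=\infty$, where $\varphi_A$ need not grow linearly; there one should instead invoke directly that $\P_{x;A}$ is singular to $\P_x$ on $\F_\infty$ (already noted after the proof of Theorem \ref{t1.3}) together with the zero--one law for the tail event $\{|X_t|\to\infty\}$, or simply observe that $h$ is unbounded in at least one direction and the $h$-transform $\P_{x;0}^{(0)}$ is known to be transient, from which transience of the further conditioning $\P_{x;A}$ follows. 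Everything else is a line-by-line transcription of Subsection \ref{SS1}.
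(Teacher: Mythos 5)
Your proposal is correct and is essentially the paper's own argument: the paper disposes of this theorem with the single phrase ``Similarly to Subsection \ref{SS1}'', and you carry out exactly that transcription (transience via the non-negative $\P_{x;A}$-supermartingale $1/\varphi_A(X_t)$, the ratio limit $\varphi_A(X_t)/h(X_t)\to 1$, and Theorem 4.1 of \cite{Yano} applied with $\Gamma_t=1_{\{T_A>t\}}$ and $\mathscr{E}_t=1_{\{T_0>t\}}$). The one caveat you raise is unnecessary: the transience argument of Proposition \ref{c6} uses only that $\varphi_A$ is locally bounded (immediate from $\varphi_A\le h$ and continuity of $h$), not linear growth, so the case $m^2=\infty$ needs no separate treatment.
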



\section{Conditioning to avoid an integer lattice}
\label{S6}
Let $L>0$ and set $L\Z:=\{Ln\in \R;\ n\in \Z\}$. In this Section, we further assume the following:
\begin{align}
\label{f1}
\sum_{n\in \Z}\left|\frac{1}{q+\Psi(\frac{2n\pi}{L})}\right|<\infty\qquad \text{for}\ q>0.
\end{align}
We define for $x\in \R$ and $q>0$,
\begin{align}
R_q(x):=\sum_{n\in \Z}\frac{e^{\frac{2n\pi x}{L}i}}{q+\Psi(\frac{2n\pi}{L})}
\end{align}
Note that $\Psi(u)=0$ and $u=0$ are equivalent. Isozaki \cite{Iso} showed 
\begin{align}
\label{f1-1}
\P_x[e^{-qT_{L\Z}}]=\frac{R_q(x)}{R_q(0)}.
\end{align}
We define for $x\in \R$ and $q>0$, $H_q(x):=R_q(0)-R_q(x)$. Then, the limit
\begin{align}
\label{f2}
\lim_{q\to 0+}H_q(x)=\sum_{n\in \Z\setminus \{0\}}\frac{1-e^{\frac{2n\pi x}{L}i}}{\Psi(\frac{2n\pi}{L})}
\end{align}
exists and is finite. Furthermore, if we let $\varphi_{L\Z}(x)$ denote this limit, it is continuous. Indeed, since $|\Psi(u)|\to\infty$ as $|u|\to \infty$, we have $|\frac{\Psi(u)}{q+\Psi(u)}|\to 1$ as $|u|\to \infty$. Thus, the convergence $\sum_{n\in \Z}|\frac{1}{q+\Psi(\frac{2n\pi}{L})}|$ and $\sum_{n\in \Z\setminus\{0\}}|\frac{1}{\Psi(\frac{2n\pi}{L})}|$ is equivalent, and then they converge by assumption (\ref{f1}). Therefore, we obtain by the dominated convergence theorem, 
\begin{align}
H_q(x)&=R_q(0)-R_q(x)=\sum_{n\in \Z\setminus \{0\}}\frac{1-e^{\frac{2n\pi x}{L}i}}{q+\Psi(\frac{2n\pi}{L})}\to \sum_{n\in \Z\setminus \{0\}}\frac{1-e^{\frac{2n\pi x}{L}i}}{\Psi(\frac{2n\pi}{L})}
\end{align}
as $q\to 0+.$

Note that the right hand side of (\ref{f2}) is the Fourier series of $\varphi_{L\Z}(x)$. Moreover, $\varphi_{L\Z}(x)$ is a periodic function with period $L$ and is also an even function.

First, we prove of the a.s. convergence.

\begin{prop}
\label{fp1}
For $x\in \R$, it holds that
\begin{align}
\lim_{q\to 0+}R_q(0)\P_x(T_{L\Z}>\bm{e}_q)=\varphi_{L\Z}(x).
\end{align}
\end{prop}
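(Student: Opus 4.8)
The plan is to imitate the computation carried out for bounded $F_\sigma$-sets in Proposition \ref{ep1}, replacing the resolvent density $r_q$ with the "periodic resolvent" $R_q$. The first step is to write, using Isozaki's identity (\ref{f1-1}),
\begin{align*}
R_q(0)\P_x(T_{L\Z}>\bm{e}_q)&=R_q(0)(1-\P_x[e^{-qT_{L\Z}}])\\
&=R_q(0)-R_q(x)-\P_x\big[R_q(0)e^{-qT_{L\Z}}-R_q(X_{T_{L\Z}})e^{-qT_{L\Z}}\big]\\
&\qquad +R_q(x)-\P_x[R_q(X_{T_{L\Z}})e^{-qT_{L\Z}}].
\end{align*}
Here I want to use that $R_q$ is $L$-periodic, so that $R_q(X_{T_{L\Z}})=R_q(0)$ on $\{T_{L\Z}<\infty\}$ (the process hits a point of $L\Z$, and $R_q$ takes the same value at every such point). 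That makes the last line vanish — this is the exact analogue of Lemma \ref{el1}, but it comes for free from periodicity rather than requiring a separate excessive-function argument. What remains is
\begin{align*}
R_q(0)\P_x(T_{L\Z}>\bm{e}_q)=H_q(x)-\P_x[H_q(X_{T_{L\Z}})e^{-qT_{L\Z}}]=H_q(x),
\end{align*}
since $H_q(0)=0$ and again $X_{T_{L\Z}}\in L\Z$ on $\{T_{L\Z}<\infty\}$ while the recurrence of $(X_t)$ gives $\P_x(T_{L\Z}<\infty)=1$. (If one prefers not to invoke recurrence here, keep the term $\P_x[H_q(X_{T_{L\Z}})e^{-qT_{L\Z}}]$ and note it is identically zero because $H_q$ vanishes on $L\Z$.)

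The second step is to pass to the limit $q\to 0+$ in $H_q(x)$. This is already done in the text: by (\ref{f2}) and the dominated convergence argument given there (using assumption (\ref{f1}) to dominate the series $\sum_{n\ne 0}(1-e^{2n\pi x i/L})/(q+\Psi(2n\pi/L))$ termwise by the summable bound coming from $|\Psi(u)/(q+\Psi(u))|\le C$), we get $H_q(x)\to\varphi_{L\Z}(x)$. Combining the two steps gives $\lim_{q\to 0+}R_q(0)\P_x(T_{L\Z}>\bm{e}_q)=\varphi_{L\Z}(x)$, which is the claim.

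The main obstacle is the justification that $R_q$ (hence $H_q$) is genuinely $L$-periodic and continuous as a function on $\R$, and that the interchange of $\P_x[\cdot]$ with the infinite sum defining $R_q(X_{T_{L\Z}})$ is legitimate; this rests on the absolute convergence guaranteed by (\ref{f1}), so it should be routine, but it is the place where care is needed. A secondary point is that the statement of the theorem (Theorem \ref{1.4}) is phrased with the extra indicator $t<\bm{e}_q$ and uses an excursion-type normalization, so after establishing this a.s. convergence one still needs the companion $L^1(\P_x)$-convergence (the analogue of Proposition \ref{ep2}, with the uniform bound $0\le H_q(X_t)\le$ const coming from continuity and periodicity of the limit together with monotonicity of $R_q$ in $q$) and then an application of the general Theorem \ref{t2}; but that is the content of the subsequent propositions, not of this one.
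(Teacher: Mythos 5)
Your proposal is correct and, once the extra terms you introduce are observed to vanish identically by periodicity, it collapses to exactly the paper's argument: Isozaki's identity (\ref{f1-1}) gives $R_q(0)\P_x(T_{L\Z}>\bm{e}_q)=R_q(0)(1-\P_x[e^{-qT_{L\Z}}])=R_q(0)-R_q(x)=H_q(x)$ in one line, and the limit is then (\ref{f2}). The detour through the killed-resolvent decomposition of Proposition \ref{ep1} and the analogue of Lemma \ref{el1} is harmless but unnecessary here.
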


\begin{proof}
By Fubini's theorem and by (\ref{f1-1}) and (\ref{f2}), we have
\begin{align}
R_q(0)\P_x(T_{L\Z}>\bm{e}_q)=R_q(0)(1-\P_x[e^{-qT_{L\Z}}])=R_q(0)-R_q(x)\to \varphi_{L\Z}(x)
\end{align}
as $q\to 0+$. The proof is complete.
\end{proof}

\begin{Rem}
Since $\sum_{n\in \Z\setminus \{0\}}\frac{1}{\Psi(\frac{2n\pi}{L})}$ converges, we have
\begin{align}
\label{6.8}
qR_q(0)=1+\sum_{n\in \Z\setminus \{0\}}\frac{q}{q+\Psi(\frac{2n\pi}{L})}\to 1
\end{align}
as $q\to 0+.$ By (\ref{b8}), we obtain
\begin{align}
\lim_{q\to 0+}\frac{r_q(0)}{R_q(0)}=\lim_{q\to 0+}\frac{qr_q(0)}{qR_q(0)}= 0.
\end{align}
Therefore, comparing Proposition \ref{ep1} and Proposition \ref{fp1}, the speed of convergence of $\P_x(T_{A}>\bm{e}_q)$ and $\P_x(T_{L\Z}>\bm{e}_q)$ is different, where $A$ is a bounded $F_\sigma$-set.
\end{Rem}

Next, we prove of the $L^1(\P_x)$-convergence.

\begin{prop}
\label{fp2}
For $x\in \R$, it holds that
\begin{align}
\lim_{q\to 0+}R_q(0)\P_{X_t}(T_{L\Z}>\bm{e}_q)=\varphi_{L\Z}(X_t)\ \text{in}\ L^1(\P_x).
\end{align}
\end{prop}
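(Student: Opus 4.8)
The plan is to mimic exactly the structure of the $L^1$-convergence arguments already used in this paper, e.g.\ the proof of Proposition \ref{ep2} and of Theorem \ref{3.2}, but now with the resolvent kernel $R_q$ replacing $r_q$ and with $H_q$ replacing $h_q$. First I would record the analogue of the pointwise identity from Proposition \ref{fp1} applied along the trajectory: by Fubini's theorem, (\ref{f1-1}), and the definition $H_q=R_q(0)-R_q(\cdot)$,
\begin{align}
R_q(0)\P_{X_t}(T_{L\Z}>\bm{e}_q)=R_q(0)\bigl(1-\P_{X_t}[e^{-qT_{L\Z}}]\bigr)=H_q(X_t),
\end{align}
so the claim reduces to showing $H_q(X_t)\to\varphi_{L\Z}(X_t)$ in $L^1(\P_x)$ as $q\to 0+$. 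Since we already know $H_q(X_t)\to\varphi_{L\Z}(X_t)$ pointwise (a.s.) from Proposition \ref{fp1} and the finiteness of the limit in (\ref{f2}), it suffices to produce an integrable dominating function and invoke dominated convergence.

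The key step is the uniform bound. Here I would use that $\varphi_{L\Z}$, being a uniformly convergent Fourier series (the series $\sum_{n\neq 0}|\Psi(2n\pi/L)|^{-1}$ converges by assumption (\ref{f1})), is bounded and continuous on $\R$, and that $H_q$ is likewise uniformly bounded in $q$: since $|1-e^{2n\pi x i/L}|\le 2$ and $|q+\Psi(2n\pi/L)|\ge |\Psi(2n\pi/L)|$ for $q>0$ (because $\Psi$ has non-negative real part and these are real $\Psi$ values only at $u=0$, so more carefully one uses $|\frac{1}{q+\Psi(u)}|\le \frac{1}{|\Psi(u)|}$ away from $0$ after checking $\mathrm{Re}\,\Psi\ge 0$), we get
\begin{align}
|H_q(x)|\le \sum_{n\in\Z\setminus\{0\}}\frac{2}{|\Psi(\frac{2n\pi}{L})|}=:C<\infty
\end{align}
uniformly in $x\in\R$ and $q>0$. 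Hence $|H_q(X_t)-\varphi_{L\Z}(X_t)|\le 2C$, a constant, which is trivially in $L^1(\P_x)$. Dominated convergence then gives the $L^1(\P_x)$-convergence, completing the proof; one would then note (as for Theorem \ref{t1.3}) that Theorem \ref{1.4} follows by combining this with Proposition \ref{fp1} and the general Theorem \ref{t2}, with the minor caveat that the clock statement in Theorem \ref{1.4} carries the extra indicator $t<\bm{e}_q$, which is harmless since $\{t<\bm{e}_q\}\supset\{T_{L\Z}>\bm{e}_q>t\}$ is already built into the $N^q_t$ term of that proof.

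The main obstacle I anticipate is purely the bookkeeping in the domination step: one must be careful that the bound on $\bigl|\frac{1}{q+\Psi(2n\pi/L)}\bigr|$ by $\bigl|\frac{1}{\Psi(2n\pi/L)}\bigr|$ is legitimate, which rests on $\mathrm{Re}\,\Psi(\lambda)\ge 0$ for all $\lambda$ (a standard property of \Levy\ exponents, giving $|q+\Psi|^2=(q+\mathrm{Re}\,\Psi)^2+(\mathrm{Im}\,\Psi)^2\ge (\mathrm{Re}\,\Psi)^2+(\mathrm{Im}\,\Psi)^2=|\Psi|^2$). Everything else — Fubini, the pointwise limit, the constant dominating function — is immediate from material already established, so the proof is short. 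Indeed, the cleanest write-up is: establish the kernel identity, cite Proposition \ref{fp1} for the pointwise limit, exhibit the constant bound $2C$, and conclude by dominated convergence.

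\begin{proof}
By Fubini's theorem and (\ref{f1-1}), for each $q>0$ we have
\begin{align}
R_q(0)\P_{X_t}(T_{L\Z}>\bm{e}_q)&=R_q(0)\bigl(1-\P_{X_t}[e^{-qT_{L\Z}}]\bigr)\notag\\
&=R_q(0)-R_q(X_t)=H_q(X_t).
\end{align}
By Proposition \ref{fp1}, $H_q(X_t)\to\varphi_{L\Z}(X_t)$ as $q\to 0+$, $\P_x$-a.s. Since $\mathrm{Re}\,\Psi(\lambda)\ge 0$ for all $\lambda\in\R$, for $q>0$ and $n\in\Z\setminus\{0\}$ we have $|q+\Psi(\frac{2n\pi}{L})|\ge |\Psi(\frac{2n\pi}{L})|$, and thus
\begin{align}
|H_q(x)|=\Bigl|\sum_{n\in\Z\setminus\{0\}}\frac{1-e^{\frac{2n\pi x}{L}i}}{q+\Psi(\frac{2n\pi}{L})}\Bigr|\le \sum_{n\in\Z\setminus\{0\}}\frac{2}{|\Psi(\frac{2n\pi}{L})|}=:C<\infty
\end{align}
for all $x\in\R$, where the finiteness of $C$ follows from assumption (\ref{f1}) together with $|\frac{\Psi(u)}{q+\Psi(u)}|\to 1$ as $|u|\to\infty$. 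The same bound gives $|\varphi_{L\Z}(x)|\le C$, so
\begin{align}
|H_q(X_t)-\varphi_{L\Z}(X_t)|\le 2C.
\end{align}
By the dominated convergence theorem, $H_q(X_t)\to\varphi_{L\Z}(X_t)$ in $L^1(\P_x)$ as $q\to 0+$, which is the assertion.
\end{proof}
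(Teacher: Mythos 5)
Your proof is correct and follows the same overall strategy as the paper: reduce to $H_q(X_t)\to\varphi_{L\Z}(X_t)$ and conclude by dominated convergence. The only difference is the dominating function. The paper simply asserts $H_q(X_t)\le \varphi_{L\Z}(X_t)\in L^1(\P_x)$ and applies dominated convergence; this termwise inequality $H_q\le\varphi_{L\Z}$ is clear for symmetric processes (where each paired term $\frac{2(1-\cos(2n\pi x/L))}{q+\Psi(2n\pi/L)}$ increases as $q\downarrow 0$, which is also the setting in which Isozaki's formula (\ref{f1-1}) is proved), but it is not justified in the text and is not obvious when $\mathrm{Im}\,\Psi\neq 0$. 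You instead dominate by the constant $C=\sum_{n\neq 0}2|\Psi(2n\pi/L)|^{-1}$, using $\mathrm{Re}\,\Psi\ge 0$ to get $|q+\Psi|\ge|\Psi|$; this is uniform in $q$ and $x$, requires no monotonicity, and the finiteness of $C$ is exactly the equivalence of summability already recorded after (\ref{f2}). So your argument is marginally more robust than the one in the paper while being equally short; both are legitimate.
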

\begin{proof}
We want to show $H_q(X_t)\to \varphi_{L\Z}(X_t)$ in $L^1(\P_x)$, but this is clear 
 by $H_q(X_t)\le \varphi_{L\Z}(X_t)\in L^1(\P_x)$ and by the dominated convergence theorem.
\end{proof}

We give the proof of the main theorem for conditioning to avoid an integer lattice. 

\begin{proof}[The proof of Theorem \ref{1.4}]
We define
\begin{align}
N_t^q:=R_q(0)\P_x(T_{L\Z}>\bm{e}_q>t|\F_t)
\end{align}
for $q>0$. By the lack of memory property of the exponential distribution and by the Markov property, we have
\begin{align*}
N_{t}^q&=R_q(0)1_{\{T_{L\Z}>t\}}\P_x^{\F_t}(T_{L\Z}>\bm{e}_q|\ \bm{e}_q>t)\P_x(\bm{e}_q>t)\\
&=R_q(0)1_{\{T_{L\Z}>t\}}\P_{X_t}(T_{L\Z}>\bm{e}_q)e^{-qt},
\stepcounter{equation}\tag{\theequation}
\end{align*}
where $\P_x^{\F_t}$ means the conditional probability with respect to $\F_t$. Thus, we have by Proposition \ref{fp1} and \ref{fp2},
\begin{align}
\label{ff1}
\lim_{q\to 0+}N_{t}^{q}=\varphi_{L\Z}(X_t)1_{\{T_{L\Z}>t\}}\ \mathrm{a.s.\ and\ in}\ L^1(\mathbb{P}_x).
\end{align}
Therefore, we obtain by Proposition \ref{fp1},
\begin{align*}
\P_x[F_t,\ t<\bm{e}_q|\ \bm{e}_q<T_{L\Z}]=\frac{\P_x[F_t\cdot N_t^q]}{R_q(0)\P_x(\bm{e}_q<T_{L\Z})}\to \mathbb{P}_x\left[F_t \cdot\frac{\varphi_{L\Z}(X_t)}{\varphi_{L\Z}(x)}1_{\{T_{L\Z}>t\}}\right],
\stepcounter{equation}\tag{\theequation}
\end{align*}
as $q\to 0+.$ The proof is complete.
\end{proof}

\begin{Rem}
We define
\begin{align}
M_t^q:=R_q(0)\P_x(\bm{e}_q<T_{L\Z}|\F_t)
\end{align}
for $q>0$. Since we have by (\ref{6.8}),
\begin{align*}
\label{7.16}
M_{t}^q-N_{t}^q&=R_q(0)\P_x(T_{L\Z}>\bm{e}_q,\ t\ge \bm{e}_q|\F_t)\\
&=qR_q(0)\int_0^t 1_{\{T_{L\Z}>s\}}e^{-qs}ds\\
&\to \int_0^t 1_{\{T_{L\Z}>s\}}ds>0\ \text{a.s. and in }L^1(\P_x),
\stepcounter{equation}\tag{\theequation}
\end{align*}
as $q\to 0+$. Thus, by (\ref{ff1}), we have
\begin{align*}
\lim_{q\to 0+}M_t^q&=\lim_{q\to 0+}(N_t^q+(M_t^q-N_t^q))\\
&=\varphi_{L\Z}(X_t)1_{\{T_{L\Z}>t\}}+\int_0^t 1_{\{T_{L\Z}>s\}}ds\ \mathrm{a.s.\ and\ in}\ L^1(\mathbb{P}_x).
\stepcounter{equation}\tag{\theequation}
\end{align*}
Therefore, we obtain
\begin{align*}
\P_x[F_t|\ \bm{e}_q<T_{L\Z}]&=\frac{\P_x[F_t\cdot M_t^q]}{R_q(0)\P_x(\bm{e}_q<T_{L\Z})}\\
&\to \P_x\left[F_t\cdot \frac{\varphi_{L\Z}(X_t)1_{\{T_{L\Z}>t\}}+\int_0^t 1_{\{T_{L\Z}>s\}}ds}{\varphi_{L\Z}(x)}\right]
\stepcounter{equation}\tag{\theequation}
\end{align*}
as $q\to 0+.$
\end{Rem}

We define 
\begin{align}
\P_{x;L\Z}(A,\ t<\zeta):=\P_x\left[1_A\cdot \frac{\varphi_{L\Z}(X_t)}{\varphi_{L\Z}(x)}1_{\{T_{L\Z}>t\}}\right]\qquad(A\in \F_t)
\end{align}
for $x\in \{x\in \R;\ \varphi_{L\Z}(x)>0\}$, where $\zeta$ denotes the life time. In the same way as (\ref{c3}), we obtain
\begin{align}
\P_{x;L\Z}(T_{L\Z}<\infty)=0.
\end{align}


\section*{Acknowledgement}
In writing this paper, the author would like to thank Professor Kouji Yano of the Graduate School of Science, Osaka University, for his careful guidance and great support. This work was supported by JST SPRING, Grant Number JPMJSP2138. 

\bibliographystyle{plain}

\end{document}